\theoremstyle{thmstyleone}%
\newtheorem{theorem}{Theorem}
\newtheorem{proposition}{Proposition}% 
\theoremstyle{thmstyletwo}%
\newtheorem*{remarks}{Remarks}%
\theoremstyle{thmstylethree}%
\newtheorem{definition}{Definition}%
\newcommand{\RR}{\mathbb{R}}
\newcommand{\cP}{\mathcal{P}}
\DeclareMathOperator{\sym}{Sym}
\renewcommand{\hat}{\widehat}
\renewcommand{\tilde}{\widetilde}
\renewcommand{\Re}{\operatorname{Re}}
\renewcommand{\Im}{\operatorname{Im}}
\DeclareMathOperator*{\argmin}{argmin}
\newenvironment{lemma*}[1][]
{\par\medskip\noindent\textbf{Lemma\if\relax\detokenize{#1}\relax\else\ (#1)\fi.}\ \itshape}
{\par\medskip}
\newenvironment{appendixcompact}{%
  \begingroup
  % NO hay \small acá (para no cambiar tamaño de letra)
  \setlength{\parskip}{0pt}
  \setlength{\parindent}{1em}
  \setlength{\abovedisplayskip}{5pt plus 2pt minus 2pt}
  \setlength{\belowdisplayskip}{5pt plus 2pt minus 2pt}
  \setlength{\textfloatsep}{8pt plus 2pt minus 2pt}
  \setlength{\intextsep}{6pt plus 2pt minus 2pt}
  \setlength{\floatsep}{6pt plus 2pt minus 2pt}
}{%
  \endgroup
}
\begin{document}

\title[Multivariate mixture distributions]{Two statistical problems for multivariate mixture distributions}

\author[1]{\fnm{Fraiman} \sur{Ricardo}}\email{fraimanricardo@gmail.com}

\author[2]{\fnm{Moreno} \sur{Leonardo}}\email{leonardo.moreno@fcea.edu.uy}
\equalcont{These authors contributed equally to this work.}

\author*[3]{\fnm{Ransford} \sur{Thomas}}\email{ransford@mat.ulaval.ca}
\equalcont{These authors contributed equally to this work.}

\affil[1]{\orgdiv{Departamento de Matem\'atica y Ciencias, \orgname{Universidad de San Andr\'es}, \country{Argentina}} and \orgdiv{Pedeciba Matemática, \orgname{} \country{Uruguay}}}

\affil[2]{\orgdiv{Instituto de Estad\'{i}stica, Departamento de M\'etodos Cuantitativos, \orgname{FCEA, Universidad de la Rep\'ublica}, \country{Uruguay}}}

\affil*[3]{\orgdiv{D\'epartement de math\'ematiques et de statistique}, \orgname{Universit\'e Laval}, \orgaddress{ \city{Qu\'ebec City}, \postcode{G1V 0A6}, \state{Qu\'ebec}, \country{Canada}}}

\abstract{
We address two important statistical problems: that of estimating mixtures of
multivariate normal distributions and mixtures of $t$-distributions based on
univariate projections, and that of quantifying a discrepancy between mixture
distributions induced by two model-based clusterings. In the second problem, rather than introducing a direct metric on partitions, we propose a model-based distributional discrepancy between the fitted mixture distributions associated with two clusterings.
The results are based on an earlier work of the authors, where it was shown that mixtures of multivariate 
Gaussian or $t$-distributions can be distinguished by projecting them onto a certain predetermined finite set of lines, the number of lines depending only on the total number of distributions  involved and on the ambient dimension.
We also compare our proposal with robust versions of the expectation-maximization method EM. In each case, we present algorithms for effecting the task, and compare them with existing methods  by carrying out some simulations.}

\keywords{clustering,  Gaussian distribution, identifiability, mixture, projection,  $t$-distribution}

%\pacs[JEL Classification]{D8, H51}

%\pacs[MSC Classification]{35A01, 65L10, 65L12, 65L20, 65L70}

\maketitle

\section{Introduction}

\subsection{Random projections in statistics}

Random-projection (RP) techniques have become a cornerstone in high-dimensional statistical analysis. 
Over the last two decades, the theoretical and applied developments of RP have flourished. Early applications emphasized computational efficiency and scalability, particularly for clustering and matrix approximations \citep{BM2001,HMT2011}. The work presented in \citep{A2001} introduced more efficient sparse and structured random matrices, which proved crucial for large-scale implementations.

The best-known method for applications involving random projections is the one based on the Johnson--Lindenstrauss lemma, which has numerous applications, especially in image processing.
	Although there are many other proposals based on RP, in what follows we will briefly describe the Johnson--Lindenstrauss approach below and then focus solely on methods based on extensions of the Cram\'er--Wold theorem. We will describe each of them in more detail below.

\subsubsection{RP via the Johnson--Lindenstrauss lemma}

This lemma  can be stated as follows.

\begin{lemma*}[Johnson--Lindenstrauss lemma, \citep{JL1984}]
Let $\aleph_n$ be a set of $n$ points in $\mathbb{R}^d$, 
let $0 < \epsilon < 1$ and let $q>(8\log n)/\epsilon^2$. Then there exists a linear map
$f: \mathbb{R}^d \to \mathbb{R}^q$ such that, for all $u, v \in \aleph_n$,
\begin{equation}\label{E:JL}
(1-\epsilon)\|u-v \|^2 \le \| f(u) - f(v)\|^2 \le (1+\epsilon)\| u-v \|^2 .
\end{equation}
\end{lemma*}

Thus any set of points in high-dimensional Euclidean space can be linearly embedded into a much lower-dim\-ensional space such that pairwise distances are approximately preserved. 
This result lays the theoretical foundation for the widespread application of RP in modern data science and machine learning.

A natural question that arises is how to find the linear map $f$.
Dasgupta and Gupta \citep{DG2003}
propose a solution based on the following result,
which allows us to find $f$ in randomized polynomial time.

\begin{theorem}[Dasgupta--Gupta \citep{DG2003}]
Let $\aleph_n$ and $\epsilon$ be as  in Theorem~\ref{E:JL},
and let $q>(4\log n)/(\epsilon^2/2-\epsilon^3/3)$.
Define $f(u):= \sqrt{(d/q)}PUu$, where 
$U$ is a $d\times d$ unitary matrix
whose columns are iid (independent and identically distributed) random vectors 
uniformly distributed on the unit sphere in $\RR^d$,
and $P:\RR^d\to\RR^q$ denotes the projection onto the first $q$
coordinates.
Then
\[
\mathbb{P}\Bigl((1-\epsilon)\Vert u-v \Vert^2 \leq \Vert f(u) - f(v)\Vert^2 \leq (1+\epsilon)\Vert u-v \Vert^2)~\forall u,v\in\aleph_n\Bigr) \ge 1/n.
\]	
\end{theorem}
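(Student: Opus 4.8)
The plan is to exploit the linearity of $f$ together with the rotational invariance of the construction, reducing everything to a one-dimensional tail estimate for a ratio of chi-squared variables. First I would observe that since $f$ is linear, $f(u)-f(v)=f(u-v)$, so it suffices to understand the distribution of $\|f(x)\|^2$ for an arbitrary fixed $x\in\RR^d$, and by homogeneity I may assume $\|x\|=1$. Because the law of $U$ is invariant under rotations, for fixed unit $x$ the vector $Ux$ is uniformly distributed on the unit sphere $S^{d-1}$, and such a point can be realized as $Z/\|Z\|$ where $Z=(Z_1,\dots,Z_d)$ has iid $N(0,1)$ entries. Since $P$ retains the first $q$ coordinates, this yields the representation
\begin{equation*}
\|f(x)\|^2=\frac{d}{q}\,\frac{\sum_{i=1}^q Z_i^2}{\sum_{i=1}^d Z_i^2}=\frac{d}{q}\,L,\qquad L:=\frac{A}{A+B},
\end{equation*}
where $A:=\sum_{i=1}^q Z_i^2$ and $B:=\sum_{i=q+1}^d Z_i^2$ are independent, with $A\sim\chi^2_q$ and $B\sim\chi^2_{d-q}$. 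By symmetry $\mathbb{E}[L]=q/d$, so the event to control is that $L$ stays within a factor $1\pm\epsilon$ of its mean.

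The technical heart is a pair of Chernoff-type tail bounds for $L$. For the lower tail, writing the event $\{L\le(1-\epsilon)q/d\}$ as $\{(d-(1-\epsilon)q)\,A\le(1-\epsilon)q\,B\}$ converts it into a deviation inequality for a linear combination of the \emph{independent} variables $A$ and $B$; applying Markov's inequality to an exponential moment and optimizing the free parameter, using the moment generating function of the chi-squared law, produces a bound of the form $\exp\!\big(\tfrac{q}{2}(\epsilon+\ln(1-\epsilon))\big)$. The upper tail $\{L\ge(1+\epsilon)q/d\}$ is handled symmetrically and gives $\exp\!\big(\tfrac{q}{2}(-\epsilon+\ln(1+\epsilon))\big)$. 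I expect this optimization — keeping the dependence between numerator and denominator under control by splitting the denominator $A+B$ into $A$ plus the independent block $B$ — to be the main obstacle, since this is precisely where the exponent $\epsilon^2/2-\epsilon^3/3$ is born.

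Next I would convert these logarithmic exponents into the advertised rate using the elementary inequalities $\ln(1+\epsilon)\le\epsilon-\epsilon^2/2+\epsilon^3/3$ and $\ln(1-\epsilon)\le-\epsilon-\epsilon^2/2$, valid for $0<\epsilon<1$. Both tail probabilities are then at most $\exp\!\big(-\tfrac{q}{2}(\epsilon^2/2-\epsilon^3/3)\big)$, so for a single fixed pair $u,v\in\aleph_n$ the probability that \eqref{E:JL} fails is at most $2\exp\!\big(-\tfrac{q}{2}(\epsilon^2/2-\epsilon^3/3)\big)$.

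Finally I would take a union bound over the $\binom{n}{2}=n(n-1)/2$ pairs, so that the total failure probability is at most $n(n-1)\exp\!\big(-\tfrac{q}{2}(\epsilon^2/2-\epsilon^3/3)\big)$. With $q>(4\log n)/(\epsilon^2/2-\epsilon^3/3)$ the exponent satisfies $\tfrac{q}{2}(\epsilon^2/2-\epsilon^3/3)>2\log n$, whence the per-pair failure is below $n^{-2}$ and the total failure is strictly less than $n(n-1)\cdot n^{-2}<1-1/n$. Therefore $f$ satisfies the required two-sided inequality for all pairs simultaneously with probability at least $1/n$, which proves the stated bound and, being positive, re-establishes the existence of the embedding by the probabilistic method.
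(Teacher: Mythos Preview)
The paper does not itself prove this theorem; it is quoted from \cite{DG2003} as background for the discussion of random projections, so there is no in-paper proof to compare against. Your argument is correct and is precisely the proof given in \cite{DG2003}: the reduction via linearity and rotational invariance to the ratio $L=A/(A+B)$ of independent chi-squared blocks, the Chernoff bounds yielding the exponents $\tfrac{q}{2}(\pm\epsilon+\ln(1\mp\epsilon))$, the Taylor inequalities producing $-(\\epsilon^2/2-\epsilon^3/3)q/2$, and the final union bound over the $\binom{n}{2}$ pairs all match the original.
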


Recent studies continue to build on the Johnson--Lindenstrauss lemma, investigating its application in modern contexts, for example, as a method for dimensionality reduction  \citep{D2013}, particularly in machine-learning applications, in high-dimensional clustering, and nearest neighbor search. These studies demonstrate how RP can preserve important properties in high-dimensional spaces while dramatically reducing the computational complexity of various algorithms.

A nice property of the inequality \eqref{E:JL}
is that it does not depend on the dimension $d$. 
However,  if $\epsilon$ is small then $q$ becomes very large.

\subsubsection{RP via the Cram\'er--Wold theorem}

A more recent and statistically grounded direction emerged in the work of  Cuesta, Fraiman and Ransford \citep{CFR07},
based on extensions of the Cram\'er--Wold theorem.

In its original form, established in \citep{CW36}, the Cram\'er--Wold theorem says that
a multivariate distribution is uniquely determined by its one-dimensional projections. Subsequent work of R\'enyi \citep{Re52},
Gilbert \citep{Gi55}, B\'elisle--Mass\'e--Ransford \citep{BMR97} and Cuesta--Fraiman--Ransford \citep{CFR07}
showed that, under appropriate assumptions on the growth of its moments, a multivariate distribution is determined by 
a `sufficiently rich' set of lower-dimensional projections,
thereby allowing it to be determined from random projections onto low-dimensional subspaces.

Here is one such result. In what follows,
given a Borel probability measure $P$ on $\mathbb{R}^d$ and a vector subspace $H$ of $\mathbb{R}^d$, we denote  
by $P_H$ the projection of $P$ onto $H$, defined as the Borel probability measure on $H$ given by
\[
P_H(B):=P(\pi_H^{-1}(B)),
\]
where $\pi_H:\RR^d\to H$ is the orthogonal projection of $\RR^d$ onto $H$. Also, if $x\in\RR^d$, then $\langle x\rangle$ denotes the
vector subspace spanned by $x$.

\begin{theorem}[\protect{Cuesta--Fraiman--Ransford
\cite[Corollary 3.2]{CFR07}}]
Let $P$ and $Q$ be Borel probability measures on $\mathbb{R}^d$, where $d \geq 2$. Assume that:
\par
(i) the absolute moments $m_n = \int \|x\|^n\,dP(x)$ are finite and satisfy the  condition $\sum_{n=1}^{\infty} m_n^{-1/n}= \infty$;
\par
(ii)  the set $ \{x \in \mathbb{R}^d: P_{\langle x\rangle}= Q_{\langle x\rangle}\}$ is of positive Lebesgue measure.
\par
Then $P=Q.$
 \end{theorem}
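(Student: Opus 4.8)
The plan is to upgrade the positive-measure hypothesis into equality of \emph{all} moments, and then to finish with the univariate Carleman theorem together with the Cram\'er--Wold theorem. Since $P_{\langle x\rangle}$ depends only on the direction $x/\|x\|$, the set $E:=\{x\in\RR^d:P_{\langle x\rangle}=Q_{\langle x\rangle}\}$ is a cone, so its having positive Lebesgue measure is equivalent to the set of directions $D:=E\cap S^{d-1}$ having positive surface measure $\sigma(D)>0$. For $u\in D$ the pushforwards of $P$ and $Q$ onto the line $\langle u\rangle$ agree as measures, whence all their moments agree: $\int\langle y,u\rangle^n\,dP(y)=\int\langle y,u\rangle^n\,dQ(y)$ for every $n\ge0$, each side being dominated in absolute value by $m_n$ thanks to (i).

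Before exploiting this I would first check that $Q$, like $P$, has finite absolute moments of every order, which is what lets me treat the moment functions below as genuine polynomials on all of $\RR^d$. Integrating the identity above over $D$ and applying Tonelli gives
\[
\int\Big(\int_D|\langle y,u\rangle|^n\,d\sigma(u)\Big)\,dQ(y)=\int_D\Big(\int|\langle y,u\rangle|^n\,dP(y)\Big)\,d\sigma(u)\le\sigma(D)\,m_n .
\]
The inner integral $v\mapsto\int_D|\langle v,u\rangle|^n\,d\sigma(u)$ is continuous on $S^{d-1}$ and cannot vanish at any $v$ (that would force $D$ into the hyperplane $v^\perp$, contradicting $\sigma(D)>0$), so by compactness it is bounded below by some $c_n>0$; writing $y=\|y\|v$ yields $\int_D|\langle y,u\rangle|^n\,d\sigma(u)\ge c_n\|y\|^n$. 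Hence $c_n\int\|y\|^n\,dQ(y)\le\sigma(D)\,m_n<\infty$, and all moments of $Q$ are finite.

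With both measures now possessing all moments, the decisive observation is that, for each fixed $n$, the map $u\mapsto\int\langle y,u\rangle^n\,d(P-Q)(y)$ is a homogeneous polynomial of degree $n$ on $\RR^d$, its coefficients being the (now finite) mixed moments of $P-Q$. This polynomial vanishes on $D$, hence on the cone generated by $D$, a set of positive Lebesgue measure; but a polynomial vanishing on a set of positive Lebesgue measure vanishes identically. Therefore $\int\langle y,u\rangle^n\,dP=\int\langle y,u\rangle^n\,dQ$ for every $u\in\RR^d$ and every $n$, i.e.\ the one-dimensional projections $P_{\langle u\rangle}$ and $Q_{\langle u\rangle}$ share all their moments for \emph{every} direction $u$, not merely those in $D$.

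To conclude, I would fix an arbitrary $u\in S^{d-1}$. The moments of $P_{\langle u\rangle}$ are dominated by $m_n$, and since $m_n$ is log-convex (by Cauchy--Schwarz, $m_n^2\le m_{n-1}m_{n+1}$) the hypothesis $\sum_n m_n^{-1/n}=\infty$ forces the even-index sum $\sum_n m_{2n}^{-1/(2n)}$ to diverge as well; thus $P_{\langle u\rangle}$ satisfies Carleman's condition and is determined by its moments. As $Q_{\langle u\rangle}$ carries the same moments, $P_{\langle u\rangle}=Q_{\langle u\rangle}$ for every $u$, and the Cram\'er--Wold theorem then delivers $P=Q$. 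I expect the two technical points---the finiteness of the moments of $Q$ and the passage from $\sum_n m_n^{-1/n}=\infty$ to the even-index Carleman condition---to demand the most care, whereas the conceptual heart of the argument is the elementary remark that the moment functions are polynomials, so that their agreement on a set of positive measure propagates automatically to agreement everywhere.
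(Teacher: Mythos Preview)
The paper does not prove this theorem; it is merely quoted from \cite{CFR07} as background, so there is no in-paper proof to compare against. Your argument is correct and is essentially the standard route (and, as far as one can tell from the citation, the one taken in \cite{CFR07}): reduce to equality of the moment polynomials $u\mapsto\int\langle y,u\rangle^n\,d(P-Q)(y)$ on a cone of positive measure, use that a polynomial vanishing on such a set vanishes identically, and conclude via Carleman and Cram\'er--Wold.

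Two small points worth tightening. First, your passage from $\sum_n m_n^{-1/n}=\infty$ to $\sum_n m_{2n}^{-1/(2n)}=\infty$ is right but deserves one more line: log-convexity of $(m_n)$ together with $m_0=1$ gives that $n\mapsto m_n^{1/n}$ is nondecreasing, hence $a_n:=m_n^{-1/n}$ is nonincreasing, and then $a_{2k-1}+a_{2k}\le 2a_{2k-2}$ shows the even-index series dominates the full series up to a constant. Second, in the finiteness argument for the moments of $Q$, the step $\int_D|\langle y,u\rangle|^n\,dQ(y)=\int_D|\langle y,u\rangle|^n\,dP(y)$ uses that $P_{\langle u\rangle}=Q_{\langle u\rangle}$ as \emph{measures}, so integrals of $|t|^n$ (not only $t^n$) agree; you have this implicitly, but it is the point that makes Tonelli applicable with a nonnegative integrand. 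With these clarified, the proof is complete.
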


Cuesta--Fraiman--Ransford focused on using RP not only as a computational tool but as a vehicle for inferring robust statistical structure. In particular, they demonstrated how statistical procedures \textit{random-projection tests} can preserve desirable properties such as consistency and robustness while remaining computationally feasible.

This line of work differs fundamentally from algorithmic RP approaches. Rather than using RP solely for dimensionality reduction, Cuesta, Fraiman  and their collaborators exploit the probabilistic distribution of projections to characterize centrality, perform goodness-of-fit tests \citep{CBFM2007}, depth measure \citep{CFF2007},  detect outliers  \citep{NC2021}, and hypothesis tests \citep{FMV2017}. For example, in their framework, distributions are projected onto randomly chosen lines, and classical univariate techniques are applied. Repeating this process over multiple projections yields statistical conclusions that are consistent with high-dimensional structures.

These methods have inspired extensions in several applied domains. For instance, random-projection depth was later adapted to functional data analysis, see \citep{CF2009} and \citep{CF2010}. 
The article \citep{NCG2014} proposes tests for functional data using projections, while \citep{BBTW2011} explores robust estimators via projection-pursuit methods. These studies reinforce the versatility of random projections as not only an algorithmic trick but a robust statistical principle. Moreover, this framework aligns conceptually with the work of \citep{DT2009}, which emphasized the geometric underpinnings of high-dimensional phenomena, although the focus
in \citep{DT2009} is often more combinatorial.

In contrast to mainstream applications of random projection, such as kernel approximation  or compressed sensing, the approach developed in \citep{CFR06,CFR07} is distinguished by its nonparametric nature and inferential focus. Rather than focusing on data transformations that preserve structural relationships for machine learning, the latter work is more oriented to preserving the geometric properties of the data for inferential purposes. This distinction in focus underscores the role of RP in nonparametric inference, as it provides an intuitive tool for deriving statistical conclusions from data without imposing stringent parametric assumptions.

More recently, there have been some further refinements 
of the Cram\'er--Wold theorem showing that, in some cases, just finitely many one-dimensional projections suffice to determine the measure. For example, the following result shows that,
in the case of elliptical distributions (which includes Gaussian and $t$-distributions), just $(d^2+d)/2$ projections suffice.

\begin{theorem}[\protect{Fraiman--Moreno--Ransford \citep{FMR23a}}]
Let $v_1,\dots,v_d$ be linearly independent vectors in $\mathbb{R}^d$, and let $S=\{v_j+v_k:1\le j\le k\le d\}$. If $P,Q$ are elliptical
distributions in $\mathbb{R}^d$ and if $P_{\langle x\rangle}=Q_{\langle x\rangle}$ for all $x\in S$, then $P=Q$.
\end{theorem}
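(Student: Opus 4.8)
The plan is to use the structure of an elliptical law $P$, which is completely described by a location vector $\mu_P$, a (positive-definite, hence non-degenerate) scatter matrix $\Sigma_P$, and a real radial generator $\psi_P$ via the characteristic function $t\mapsto e^{i t^\top\mu_P}\psi_P(t^\top\Sigma_P t)$. The first step is to rewrite the hypothesis in this language. Projecting onto the line $\langle x\rangle$ is, up to a fixed rescaling of the axis, the same as passing to the law of the scalar $x^\top X$, whose characteristic function is $s\mapsto e^{is\,x^\top\mu_P}\psi_P(s^2\,x^\top\Sigma_P x)$. Thus $P_{\langle x\rangle}=Q_{\langle x\rangle}$ for $x\in S$ becomes the family of identities
\[
e^{is\, x^\top\mu_P}\psi_P(s^2\, x^\top\Sigma_P x) = e^{is\, x^\top\mu_Q}\psi_Q(s^2\, x^\top\Sigma_Q x), \qquad s\in\RR,\ x\in S.
\]

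Next I would separate location from shape. Every elliptical law is symmetric about its centre, so $x^\top X$ is symmetric about $x^\top\mu_P$; since the centre of symmetry of a non-degenerate distribution is unique, the hypothesis forces $x^\top\mu_P=x^\top\mu_Q$ for each $x\in S$. Taking the diagonal vectors $x=2v_j$ gives $v_j^\top(\mu_P-\mu_Q)=0$ for all $j$, and the linear independence of $v_1,\dots,v_d$ then yields $\mu_P=\mu_Q$. After recentring we may take this common location to be $0$, so the identities collapse to $\psi_P(u\,a_x)=\psi_Q(u\,b_x)$ for all $u\ge0$, where $a_x:=x^\top\Sigma_P x$ and $b_x:=x^\top\Sigma_Q x$.

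The heart of the argument, and the step I expect to be the main obstacle, is that $\psi_P,\psi_Q,\Sigma_P,\Sigma_Q$ are \emph{not} individually identifiable: rescaling $\Sigma$ by $\lambda>0$ while replacing $\psi(\cdot)$ by $\psi(\cdot/\lambda)$ leaves the law unchanged, so one cannot prove $\psi_P=\psi_Q$ and $\Sigma_P=\Sigma_Q$ separately, only that the combinations agree. To manage this I would first apply the identity to one vector, say $x=2v_1$; since $a_x,b_x>0$ by non-degeneracy, this gives $\psi_P(\cdot)=\psi_Q(c\,\cdot)$ with scale factor $c:=b_x/a_x$. Feeding this back into the general identity produces $\psi_Q(c\,a_x u)=\psi_Q(b_x u)$ for every $x\in S$ and $u\ge0$. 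A non-degeneracy lemma then closes the step: if $\psi_Q(\lambda t)=\psi_Q(t)$ for all $t\ge0$ with $\lambda\neq1$, iterating and using continuity of $\psi_Q$ at $0$ (with $\psi_Q(0)=1$) forces $\psi_Q\equiv1$, i.e. a point mass, a contradiction. Hence $c\,a_x=b_x$, that is $x^\top(\Sigma_Q-c\Sigma_P)x=0$ for all $x\in S$.

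Finally I would recover the matrices from the quadratic form, which is the geometric reason the particular set $S$ works. Writing $V:=[\,v_1\ \cdots\ v_d\,]$ (invertible) and $M:=V^\top(\Sigma_Q-c\Sigma_P)V$, the diagonal entries $M_{jj}$ are read off from $x=2v_j$ and the off-diagonal entries $M_{jk}$ from $x=v_j+v_k$ together with the already-known diagonal terms; thus $M=0$ and, by invertibility of $V$, $\Sigma_Q=c\,\Sigma_P$. Combining this with $\psi_P(\cdot)=\psi_Q(c\,\cdot)$, the scale factor cancels in the characteristic functions, since
\[
\psi_Q(t^\top\Sigma_Q t)=\psi_Q\bigl(c\, t^\top\Sigma_P t\bigr)=\psi_P(t^\top\Sigma_P t),\qquad t\in\RR^d,
\]
so $\phi_P\equiv\phi_Q$ and therefore $P=Q$. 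The number of directions used is exactly $|S|=d(d+1)/2=(d^2+d)/2$, which matches the dimension of the space of symmetric $d\times d$ matrices.
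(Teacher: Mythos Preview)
The paper does not contain a proof of this theorem; it is quoted verbatim as a result of \cite{FMR23a} (their Theorem~1 and Proposition~1). What the present paper does retain from that reference is the organizing concept: a set $S\subset\RR^d$ is an \emph{sm-uniqueness set} if the only symmetric $A$ with $x^\top Ax=0$ for all $x\in S$ is $A=0$, and such sets are exactly the finite Cram\'er--Wold systems for elliptical laws. In that framing, Theorem~1 of \cite{FMR23a} says ``sm-uniqueness $\Rightarrow$ $P=Q$'' and Proposition~1 says ``the specific set $\{v_j+v_k\}$ is an sm-uniqueness set''.

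Your argument is correct and reproduces both halves. Steps 1--6 and 8 are precisely the reduction of the elliptical Cram\'er--Wold problem to the sm-uniqueness condition: you first strip off the location using symmetry, then use the scaling freedom $\psi(\cdot)\mapsto\psi(c\,\cdot)$, $\Sigma\mapsto\Sigma/c$ to reduce to $x^\top(\Sigma_Q-c\Sigma_P)x=0$ on $S$; the dichotomy ``either $\psi_Q(\lambda t)\equiv\psi_Q(t)$ forces a point mass, or $\lambda=1$'' is the standard non-triviality lemma for radial generators. Step~7 is exactly the verification that $\{v_j+v_k:1\le j\le k\le d\}$ is an sm-uniqueness set, via the change of basis $M=V^\top AV$ and polarization. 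So your route coincides with the structure the paper inherits from \cite{FMR23a}; there is nothing to compare against in the present paper itself.

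Two small remarks. First, the uniqueness of the centre of symmetry for a one-dimensional law does not need the qualifier ``non-degenerate'': a probability measure on $\RR$ cannot be invariant under a nonzero translation, so two centres would already be a contradiction. Second, your standing assumption that $\Sigma_P,\Sigma_Q$ are positive definite is harmless here because the degenerate case (some $a_x=0$) feeds directly into your point-mass branch; but it is worth saying this explicitly rather than assuming it away.
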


In \citep{FMR2025} these results were extended to the case of certain multivariate mixtures. It was shown that mixtures of multivariate Gaussian or $t$-distributions can be distinguished by projecting them onto a certain predetermined finite set of lines, the number of lines depending only on the total number of distributions involved and on the ambient dimension. 
This last result is described in more detail in \S\ref{S:mixtures}, and its applications  form the subject of the rest of the paper.

\subsubsection{On the pros and cons of RP-methods}

Despite its many advantages, the RP methodology is not without its limitations. One notable drawback is that while it offers dimensionality reduction, it does so at the expense of certain properties of the data. Although the Johnson--Lindenstrauss lemma guarantees approximate distance preservation, it does not ensure exact preservation, especially when data points lie on complex, nonlinear manifolds. This means that RP may distort some aspects of the data distribution, especially for applications like clustering or classification, where fine-grained distinctions between data points are critical, see \citep{BM2001}.

Moreover, while RP has been shown to be robust in certain high-dimensional settings, its performance can be sensitive to the type of distribution or structure inherent in the data. For example, in datasets with heavy tails or outliers, RP techniques may not preserve the underlying distribution as effectively as more robust methods like those based on depth functions, see \citep{CFF2007}. Thus, while RP is often computationally efficient, it may sometimes fail to capture subtle patterns that are crucial in some statistical problems.

A significant challenge also arises from the inherent randomness of RP. This randomness introduces variability in the performance of algorithms across different projections. While multiple projections typically improve the consistency of results, determining the number of projections necessary for a reliable outcome is still an open question in many applications, see \citep{CFR07}. This issue has spurred research on the optimal number of projections required for specific tasks, particularly in the context of robust statistics and outlier detection.

Despite these challenges, the advantages of RP remain substantial. Its ability to reduce dimensionality while retaining much of the essential structure of the data makes it a powerful tool in large-scale statistical learning. Moreover, the simplicity of random projections allows for faster computations compared to other dimensionality reduction techniques, such as principal component analysis (PCA) or t-SNE, which can be computationally expensive in high-dimensional spaces, see \citep{VMH2008}.

As modern datasets continue to grow in complexity and dimensionality, the statistical insight provided by proj\-ection-based depth and testing becomes increasingly relevant. It offers a complement---and at times a counterpoint---to the more engineering-oriented uses of RP found in neural networks, sketching algorithms, or NLP embeddings. The blend of theoretical rigor and practical robustness in the works of Cuesta, Fraiman, Ransford and their collaborators provides tools that are especially suitable for applied statisticians working under real-world constraints, such as noisy data or nonstandard distributions.

Although the manuscript addresses: (i) parameter estimation for multivariate
mixtures and (ii) a distributional discrepancy between mixture distributions
induced by model-based clusterings, both contributions are governed by the same
principle: within the Gaussian/$t$-mixture classes considered here, the
underlying distribution is determined by a finite collection of one-dimensional
projections along a strong sm-uniqueness set (Section~2). Section~3 exploits
this fact constructively for estimation, whereas Section~4 exploits it
comparatively to quantify discrepancy by aggregating projection-wise
distributional differences.

%%%%%%%%%%%%%%%%%%%%%%%%%%%%%%%%%%%%%%%%%%%%%%%

 \subsection{Roadmap of the rest of the article}

In this article we consider two problems: that of estimating mixtures of
multivariate normal distributions and mixtures of $t$-distributions based on
univariate projections, and that of defining a discrepancy measure between
mixture distributions induced by two model-based clusterings. To solve these
problems we make use of some of the results given in \cite{FMR2025}.

Section~\ref{S:mixtures} begins with a short introduction to the notion of the Cram\'er--Wold device, and the main notions and results to be used are described.  

In Section~\ref{S:GMestimator}, we present a new procedure for  estimating  mixtures of multivariate normal distributions, and mixtures of $t$-distributions based on one-dimensional projections of the mixture based on characteristic functions (ECF), see \citep{Tr98} and \citep{XK10}, and using those univariate mixtures to provide consistent estimators of the parameters of the multivariate mixture.

The proposed random-projection method (RP) is compared with the well-known expectation-maxi\-mization method (EM) and robust versions of EM in some simulations. We use some results in \citep{Tr98} (see also \citep{Y2004} and \citep{XK10}) for some asymptotic properties. An important fact,  mentioned in \citep{Tr98}, is that  ``It is shown that, using Monte Carlo simulation, the finite sample properties of the ECF estimator are very good, even in the case where the popular maximum likelihood estimator fails to exist\dots''.
We conclude with a further example using real data from the National Institute for Education Evaluation (INEEd) in Uruguay. The dataset is open access, and the codes, written in R, are available on GitHub\footnote{\url{https://github.com/mrleomr/MultivariateMixture}} or can be requested directly from the authors.

In Section~\ref{S:randompartitions} we study the problem of comparing model-based clusterings through the mixture distributions that they induce. This should not be understood as a direct metric on partitions in the sense of the Rand, adjusted Rand or variation-of-information indices, see \citep{Ra71}, \citep{HA85}. Instead, we introduce a model-based distributional discrepancy, computed by aggregating Kolmogorov--Smirnov distances over one-dimensional projections. Throughout that section, the theoretical formulation assumes that both fitted model-based distributions
belong to the same mixture class with a common number $m$ of components. See for instance the reviews by Arabie and Boorman \citep{AB73}, Fowlkes and Mallows  \citep{FM83},  among others.

The way we approach this problem is to compare the fitted probability measures associated with two model-based clustering procedures. This provides a model-based distributional interpretation of the comparison in terms of the underlying probability distributions. If the induced mixture distributions are close, then the corresponding clustering procedures may be regarded as yielding similar model-based representations of the data. This interpretation is distributional and should not be confused with a direct comparison of the resulting partitions.

Algorithms implementing these ideas are described, and then illustrated with some simulations. The simulations indicate a high degree of agreement between several well-known distance-measures and the one that we propose.

Our arguments rely on the notion of a \emph{strong symmetric-matrix uniqueness set}
(\emph{strong sm-uniqueness set}), which provides a finite Cram\'er--Wold system
for Gaussian and $t$-mixtures. We recall the definition and its construction in
Section~\ref{S:mixtures}; see \citep{FMR23a,FMR2025}.

\paragraph{Notation.}
Throughout the manuscript, $m$ denotes the number of mixture components, $d$ the ambient dimension, $N$ the sample size (i.i.d.\ observations), and $k$ the number of projection directions used by the random-projection procedures. In particular, we use $N$ for the sample size to avoid notational conflicts with other indices.

\section{Some preliminaries and recent results to be used}\label{S:mixtures}

In this section, we introduce the fundamental concepts necessary for understanding our work. In particular, we clarify the notion of mixture, and describe a version of the Cram\'er--Wold theorem for mixtures of Gaussian or Student distributions.

%%%%%%%%%%%%%%%%%%%%%%

\subsection{Mixtures}

Mixture models are fundamental tools in statistics and machine learning, providing a flexible framework for representing heterogeneous data as combinations of simpler component distributions. Since the pioneering work of Pearson \citep{Pe1894}, finite mixture models have been widely used for clustering, density estimation, and latent structure modeling \citep{MP2000,MLR2019}. They have also found applications across many disciplines.

Estimation rates for finite mixture distributions, that is, for the mixture parameters, are also studied in \citep{CH1995,HK2018}.
 A more detailed review of the literature, including specific cases involving Gaussian or Student $t$ mixture distributions, can be found in \citep{MLR2019}.

Let $d \ge 1$ and let  $\cP$ be a family of Borel probability measures on $\RR^d$. A \emph{$\cP$-mixture}  is defined as a convex combination of measures from $\cP$. In other words, it takes the form 
\[
\sum_{j=1}^n\lambda_j P_j
\]
where $P_1,\dots,P_n\in\cP$ and $\lambda_1,\dots,\lambda_n\ge0$ with $\sum_{j=1}^n\lambda_j=1$.

Once the data has been projected, we will focus on univariate distributions. Specifically, if $d=1$, we will consider the two families of univariate distributions: Gaussian distributions and $t$-distributions.  Gaussian distributions are characterized by their densities
\begin{equation}\label{E:Gaussdf}
	f_{\mu,\sigma}(x)=\frac{1}{\sqrt{2\pi\sigma^2}}\exp\Bigl(-\frac{(x-\mu)^2}{2\sigma^2}\Bigr) \quad(x\in\RR).
\end{equation}
A \emph{$t$-distribution} on $\RR$  with $\nu$  degrees of freedom is a Borel measure characterized by the density function
\[
f_{\nu,\mu,\sigma}(x) = c_{\nu,\mu,\sigma} \left(1 + \frac{(x - \mu)^2}{\nu \sigma^2}\right)^{-(\nu + 1)/2},
\]
where $\nu>0$, $\mu\in\RR$ and $\sigma>0$. The constant $c_{\nu,\mu,\sigma}$ is chosen to ensure that 
\[
\int_{\mathbb{R}} f_{\nu,\mu,\sigma}(x)\,dx = 1.
\]
This distribution has a mean of $\mu$  (provided that $\nu>1$) and a variance of $\sigma^2\nu/(\nu-2)$ (if $\nu>2$).

In the multivariate case, where $d > 1$, a Gaussian measure $P$ on $\RR^d$ is defined by a density of the form
\begin{equation}\label{E:density}
	\frac{1}{(2\pi \det(\Sigma))^{1/2}}\exp\Bigl(-\frac{1}{2}(x-\mu)^T\Sigma^{-1}(x-\mu)\Bigr)
	\quad(x\in\RR^d),
\end{equation}
where $\mu\in \RR^d$, and where $\Sigma$ is a real $d\times d$  positive-definite matrix.

A \emph{Gaussian mixture} is a measure on $\RR^d$ that represents a finite convex combination of Gaussian measures. Mixtures of multivariate Gaussian distributions exhibit several advantageous properties. In particular, Titterington et al.\ \citep{TSM85} demonstrate that Gaussian kernel density estimators can approximate any continuous density given a sufficient number of kernels, establishing their universal consistency (see also Scott \citep{Sc92}). Additionally, it is well known that Gaussian mixtures are weak*-dense in the space of all Borel probability measures on $\RR^d$.

Gaussian-mixture models have proven to be highly effective in modeling various real-world data. For a comprehensive examination of their properties, we refer to Titterington et al.\ \citep{TSM85}. These models are flexible and general, with relevant applications documented in numerous fields, including density estimation, machine learning, and clustering. As noted in \citep{CL09}, significant applications include Pearson's early work on modeling crab data \citep{Pe1894}, among many others.

Estimating these models can be quite complex, particularly for high-dimensional data, and typically involves Markov-chain Monte-Carlo methods within a Bayesian framework. Important applications for cluster analysis are discussed in Tadesse, Sha, and Vannucci \citep{TSV05}, as well as in Raftery and Dean \citep{RD06}.

Similarly, when the data exhibit heavy tails, a similar development is plausible using mixtures of Student distributions.
A Student distribution ($t$-distribution) on $\RR^d$ is a measure with density of the form
\[
f_{\nu,\mu,\Sigma}(x)
=c_{\nu,\mu,\Sigma}\Bigl(1+\frac{(x-\mu)^T\Sigma^{-1}(x-\mu)}{\nu}\Bigr)^{-(\nu+d)/2}\quad(x\in\RR^d),
\]
where $\nu>0$, $\mu\in\mathbb{R}^d$, and $\Sigma$ is a positive-definite
$d\times d$ matrix. Once again, the constant $c_{\nu,\mu,\Sigma}$ is chosen to ensure that $\int_{\RR^d}f_{\nu,\mu,\Sigma}(x)\,dx=1$.

\subsection{A Cram\'er--Wold Theorem for Gaussian mixtures and $t$-mixtures}

In this section, we address the problem  for equality between two Gaussian mixtures by analyzing a finite number of projections. The basic Theorem supporting this approach consists of two key components. The first is the abstract result presented in Theorem 4.1 in \citep{FMR2025}. The second is a characterization of Cram\'er--Wold systems for Gaussian measures in $\mathbb{R}^d$ (and, more generally, for elliptical distributions), established in \citep[Theorems~1 and~2]{FMR23a}, which we will now summarize.

Let $S$ be a set of vectors in $\RR^d$.  The corresponding set of lines  $\{\langle x\rangle:x\in S\}$ forms a Cram\'er--Wold system for the Gaussian measures in $\RR^d$ if and only if $S$  satisfies the property that the only real symmetric $d \times d$ matrix $A$ for which $x^TAx=0$ for all $x\in S$ is the zero matrix. A set $S$ with this property is referred to as a \emph{symmetric-matrix uniqueness set} (or \emph{sm-uniqueness set} for short).

In \citep{FMR23a}, it was demonstrated that an sm-uniqueness set for $\RR^d$ must span $\RR^d$and contain at least $(d^2+d)/2$  vectors. We will denote $S$ as a \emph{strong sm-uniqueness set} if every subset of $S$ containing $(d^2+d)/2$ vectors is also an sm-uniqueness set.

We now recall  the Cram\'er--Wold Theorem for Gaussian mixtures   \citep[Theorem~4.1]{FMR2025}.

\begin{theorem}\label{T:gaussmixture}
	Let $P$ and $Q$ be convex combinations of  $m$ Gaussian measures on $\RR^d$ respectively. 
	Let $S$ be a strong sm-uniqueness set for $\RR^d$ containing at least $(1/2)(2m-1)(d^2+d-2)+1$ vectors.
	If $P_{\langle x\rangle}=Q_{\langle x\rangle}$ for all $x\in S$, then $P=Q$.
\end{theorem}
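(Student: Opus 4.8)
The plan is to pass from the hypothesis on one-dimensional projections to an identity between characteristic functions along each line, and then to exploit the linear independence of Gaussian characteristic functions together with the counting that is built into the notion of a strong sm-uniqueness set. Write $P=\sum_j\lambda_j N(\mu_j,\Sigma_j)$ and $Q=\sum_k\gamma_k N(\nu_k,\Gamma_k)$, and form the signed measure $P-Q=\sum_{\ell=1}^L c_\ell N(a_\ell,B_\ell)$, where the pairs $(a_\ell,B_\ell)$ are distinct, $L\le 2m$, and $c_\ell$ is the net coefficient. Since Gaussians with distinct mean--covariance pairs are linearly independent, it suffices to prove that every $c_\ell=0$.

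First I would record the analytic input. The equality $P_{\langle x\rangle}=Q_{\langle x\rangle}$ is equivalent to $\widehat{P}(tx)=\widehat{Q}(tx)$ for all $t\in\RR$, which for Gaussian mixtures reads
\[
\sum_{\ell=1}^L c_\ell\exp\!\Bigl(it\langle a_\ell,x\rangle-\tfrac12 t^2\, x^TB_\ell x\Bigr)=0\qquad(t\in\RR).
\]
The functions $t\mapsto\exp(i\alpha t-\tfrac12\beta t^2)$ are linearly independent for distinct $(\alpha,\beta)\in\RR^2$. Grouping the indices $\ell$ according to the value of the pair $(\langle a_\ell,x\rangle,\,x^TB_\ell x)$, linear independence forces the coefficients $c_\ell$ to sum to zero within each group. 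Introducing, for each $x\in S$, the equivalence relation $\ell\sim_x\ell'$ defined by $\langle a_\ell,x\rangle=\langle a_{\ell'},x\rangle$ and $x^TB_\ell x=x^TB_{\ell'}x$, this says that the $c_\ell$ sum to zero over every $\sim_x$-class.

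Next I would set up the covering argument. Suppose, for contradiction, that some $c_\ell\ne0$, and let $T:=\{\ell:c_\ell\ne0\}$; note $1\le|T|\le 2m$, and in fact $|T|\ge2$ since $\sum_\ell c_\ell=0$. Fix $\ell_0\in T$. For each $x\in S$ the $\sim_x$-class of $\ell_0$ has vanishing coefficient sum while $c_{\ell_0}\ne0$, so it must contain a second index $\ell'\in T$ with $\ell'\sim_x\ell_0$; equivalently $x$ lies in the collision set $Z_{\ell_0,\ell'}:=\{x:\langle a_{\ell_0}-a_{\ell'},x\rangle=0,\ x^T(B_{\ell_0}-B_{\ell'})x=0\}$. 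Hence $S\subseteq\bigcup_{\ell'\in T\setminus\{\ell_0\}}Z_{\ell_0,\ell'}$, a union of at most $|T|-1\le 2m-1$ collision sets.

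The heart of the argument---and the step I expect to be the main obstacle---is to bound the number of points of $S$ lying on a single collision set. Each $Z_{\ell_0,\ell'}$ is contained in a quadric $\{x:x^TCx=0\}$ for some nonzero symmetric $C$: if $B_{\ell_0}\ne B_{\ell'}$ take $C=B_{\ell_0}-B_{\ell'}$, and if $B_{\ell_0}=B_{\ell'}$ (so $a_{\ell_0}\ne a_{\ell'}$) take the rank-one matrix $C=uu^T$ with $u=a_{\ell_0}-a_{\ell'}$, for which $x^TCx=\langle u,x\rangle^2$. Now if such a quadric contained $(d^2+d)/2$ points of $S$, those points would, by the strong sm-uniqueness hypothesis, form an sm-uniqueness set on which the nonzero symmetric matrix $C$ vanishes, contradicting the defining property of sm-uniqueness sets. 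Therefore each $Z_{\ell_0,\ell'}$ meets $S$ in at most $(d^2+d)/2-1=(d^2+d-2)/2$ points, whence
\[
|S|\le(|T|-1)\cdot\tfrac{d^2+d-2}{2}\le(2m-1)\cdot\tfrac{d^2+d-2}{2}=\tfrac12(2m-1)(d^2+d-2).
\]
This contradicts the assumption $|S|\ge\tfrac12(2m-1)(d^2+d-2)+1$. Hence all $c_\ell=0$ and $P=Q$. The delicate points to verify carefully are the linear-independence statement for the Gaussian exponentials (so that the grouping step is legitimate) and the reduction of the mean-collision condition to a rank-one quadric, which is what allows the single counting lemma to treat the quadratic and the linear constraints uniformly.
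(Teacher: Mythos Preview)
The paper does not supply its own proof of this statement; it merely recalls the result from \cite{FMR2025} (Theorem~4.1 there) as background for the estimation and agreement procedures in Sections~\ref{S:GMestimator} and~\ref{S:randompartitions}. So there is nothing in the present paper to compare your argument against.

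That said, your argument is correct, and the exact form of the threshold $\tfrac12(2m-1)(d^2+d-2)+1$ strongly suggests that the proof in \cite{FMR2025} follows the same covering strategy: at most $2m-1$ ``collision'' quadrics, each meeting $S$ in at most $\tfrac{d^2+d}{2}-1$ points by the strong sm-uniqueness hypothesis. Your two flagged steps are fine: the linear independence of the one-dimensional Gaussian characteristic functions $t\mapsto\exp(i\alpha t-\tfrac12\beta t^2)$ for distinct $(\alpha,\beta)$ is just identifiability of univariate Gaussian mixtures (Yakowitz--Spragins, or a direct argument via the dominant decay rate $\beta$ and then the distinct frequencies $\alpha$), and the reduction of the hyperplane $\{\langle u,x\rangle=0\}$ to the rank-one quadric $\{x^T(uu^T)x=0\}$ is exactly what lets the sm-uniqueness bound handle the mean-collision case uniformly. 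One cosmetic point: you never actually use $|T|\ge2$, so you can drop that remark.
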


We also recall an analogue of Theorem~\ref{T:gaussmixture} for mixtures of  multivariate $t$-dist\-ributions, given in  \citep[Theorem~4.2]{FMR2025}, thereby allowing heavy-tailed distributions. 

\begin{theorem}\label{T:tmixture}
	Let $P$ and $Q$ be convex combinations of  $m$
	multivariate  $t$-distributions on $\RR^d$ respectively. 
	Let $S$ be a strong sm-uniqueness set for $\RR^d$ containing at least $(1/2)(2m-1)(d^2+d-2)+1$ vectors.
	If $P_{\langle x\rangle}=Q_{\langle x\rangle}$ for all $x\in S$, then $P=Q$.
\end{theorem}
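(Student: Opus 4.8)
The plan is to run the same argument that proves Theorem~\ref{T:gaussmixture}, replacing the Gaussian-specific ingredients by their $t$-analogues; in fact only the two \emph{distributional} ingredients change, since the combinatorial heart of the proof concerns the geometry of the strong sm-uniqueness set and is insensitive to the component family. First I would record the two facts about $t$-distributions. \textbf{(a) Projection.} Since a multivariate $t$-distribution is elliptical, the projection of $t_{\nu}(\mu,\Sigma)$ onto a line $\langle x\rangle$ is again a univariate $t$-distribution with the \emph{same} degrees of freedom $\nu$, location $\langle \hat x,\mu\rangle$ and squared scale $\hat x^{T}\Sigma\hat x$, where $\hat x=x/\|x\|$; this follows from the representation $Y=\mu+\Sigma^{1/2}Z/\sqrt{W/\nu}$ with $Z\sim N(0,I_d)$ and $W\sim\chi^2_\nu$ independent. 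Hence $P_{\langle x\rangle}$ and $Q_{\langle x\rangle}$ are univariate $t$-mixtures whose parameters depend on the components only through the linear form $\langle\hat x,\mu_j\rangle$ and the quadratic form $\hat x^T\Sigma_j\hat x$. \textbf{(b) Univariate identifiability.} Finite mixtures of univariate $t$-distributions are identifiable, i.e.\ a nonzero signed combination of pairwise-distinct univariate $t$-laws is never the zero measure.

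Granting (a) and (b), I would organize the proof around the signed measure $P-Q$. Writing $P$ and $Q$ as convex combinations and merging coincident components, I get $P-Q=\sum_{i=1}^{r}c_i\,t_{\nu_i}(\mu_i,\Sigma_i)$ with $r\le 2m$, pairwise distinct triples $(\nu_i,\mu_i,\Sigma_i)$, and net weights $c_i\neq0$; the goal becomes $r=0$. For each $x\in S$, the hypothesis $P_{\langle x\rangle}=Q_{\langle x\rangle}$ together with (a) forces the projected signed measure $\sum_i c_i\,t_{\nu_i}(\langle\hat x,\mu_i\rangle,\hat x^T\Sigma_i\hat x)$ to vanish, so by (b) its distinct univariate summands must cancel; thus \emph{each} component must share its projected parameters with another component on that line. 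Two components $i\neq i'$ can coincide on $\langle x\rangle$ only if $\nu_i=\nu_{i'}$, $\langle\hat x,\mu_i-\mu_{i'}\rangle=0$ and $\hat x^T(\Sigma_i-\Sigma_{i'})\hat x=0$.

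The combinatorial key is the counting estimate for a strong sm-uniqueness set: if $A$ is a \emph{nonzero} symmetric $d\times d$ matrix, then at most $(d^2+d)/2-1=(d^2+d-2)/2$ vectors of $S$ satisfy $x^TAx=0$, for otherwise some $(d^2+d)/2$-element subset of $S$ would fail to be an sm-uniqueness set. Applying this with $A=\Sigma_i-\Sigma_{i'}$ (when $\Sigma_i\neq\Sigma_{i'}$) and with the rank-one matrix $A=vv^T$, $v=\mu_i-\mu_{i'}$ (when $\Sigma_i=\Sigma_{i'}$ but $\mu_i\neq\mu_{i'}$, using $\langle v,x\rangle^2=x^TAx$), and noting that components with $\nu_i\neq\nu_{i'}$ never coincide, I obtain that a fixed component $i$ coincides with any given partner on at most $(d^2+d-2)/2$ lines of $S$. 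Summing over the at most $r-1\le 2m-1$ partners, component $i$ is matched on at most $(2m-1)(d^2+d-2)/2$ vectors of $S$. Since $|S|\ge\tfrac12(2m-1)(d^2+d-2)+1$ strictly exceeds this, some $x_0\in S$ leaves component $i$ unmatched, so its projection survives there with weight $c_i\neq0$, contradicting the vanishing of the projected signed measure. Hence $r=0$ and $P=Q$.

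I expect the main obstacle to be fact (b), the univariate identifiability, rather than the geometry: for Gaussian components, linear independence of distinct densities is elementary, whereas for $t$-components one must accommodate a possibly heterogeneous collection of degrees of freedom. I would establish it by separating according to $\nu$ — components with distinct $\nu$ are distinguished by their distinct polynomial tail rates $|x|^{-(\nu+1)}$, while within a fixed $\nu$ one argues via the characteristic function (or analytic continuation of the density) that distinct location–scale pairs give linearly independent laws. Everything else transfers verbatim from the proof of Theorem~\ref{T:gaussmixture}, which is precisely why the same cardinality bound $\tfrac12(2m-1)(d^2+d-2)+1$ governs both statements.
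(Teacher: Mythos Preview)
The paper does not prove Theorem~\ref{T:tmixture}; it is quoted without proof from \cite[Theorem~4.2]{FMR2025}, so there is no in-paper argument to compare against. That said, your proposal is the natural transfer of the Gaussian proof and is essentially correct: the projection fact~(a) is standard for elliptical families, the counting on a strong sm-uniqueness set is exactly right (including the rank-one trick $x^{T}(vv^{T})x=\langle v,x\rangle^{2}$ for the equal-scatter case), and the bound $(2m-1)(d^{2}+d-2)/2$ on matched directions for a fixed component is what forces the cardinality hypothesis. The only point that requires more than a one-line justification is~(b), the identifiability of finite univariate $t$-mixtures with possibly heterogeneous $\nu$: your tail-rate heuristic distinguishes only the stratum with the smallest $\nu$, so a complete argument must proceed by induction, peeling off one $\nu$-stratum at a time (or else cite a known identifiability result for this family). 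With that step supplied, the rest of your argument goes through verbatim.
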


%%%%%%%%%%%%%%%%%%%%%%%%%%%%%%%%%%

\subsection{Strong sm-uniqueness sets}

Theorems~\ref{T:gaussmixture} and \ref{T:tmixture}  beg the question as to whether there exist strong sm-uniqueness sets of arbitrarily large cardinality. An affirmative  answer is given in the following result based on \citep[Theorem~4.3]{FMR2025}, which also  provides a realistic method for generating them.

We next recall the notion of a (strong) sm-uniqueness set, which is needed for the theorem below.

\begin{definition}[sm-uniqueness and strong sm-uniqueness sets {\cite{FMR23a,FMR2025}}]
\label{def:ssmu}
Let $S\subset\RR^d$. We say that $S$ is an \emph{sm-uniqueness set} if the only real
symmetric $d\times d$ matrix $A$ satisfying $x^\top A x = 0$ for all $x\in S$ is $A=0$.
If $|S|\ge d(d+1)/2$, we say that $S$ is \emph{strong} if every subset of $S$ with
cardinality $d(d+1)/2$ is itself an sm-uniqueness set.
\end{definition}

\begin{theorem}\label{T:ssmu}
	Let $d\ge2$, let $k\ge (d^2+d)/2$, and let $v_1,\dots,v_k$ be independent random vectors in $\RR^d$
	whose distributions are given by densities on $\RR^d$. Then, with probability one, the set $\{v_1,\dots,v_k\}$ 
	is a strong sm-uniqueness set for $\RR^d$.
\end{theorem}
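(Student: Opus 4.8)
The plan is to recast the entire question inside the space $\sym_d$ of real symmetric $d\times d$ matrices, which has dimension $N:=d(d+1)/2$. Since $\trace(A\,xx^\top)=x^\top A x$, a finite set $S\subset\RR^d$ is an sm-uniqueness set precisely when the rank-one matrices $\{xx^\top:x\in S\}$ span $\sym_d$: the symmetric matrices $A$ annihilating $S$ are exactly those orthogonal, for the trace inner product, to every $xx^\top$, so the annihilator is trivial iff the $xx^\top$ span $\sym_d$. In particular, for a set of cardinality exactly $N$, being an sm-uniqueness set is equivalent to the $N$ matrices $xx^\top$ forming a basis of $\sym_d$, i.e.\ to their linear independence.

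First I would use this to reformulate the conclusion: $\{v_1,\dots,v_k\}$ is a strong sm-uniqueness set iff, for every $N$-element index set $I\subset\{1,\dots,k\}$, the matrices $\{v_iv_i^\top:i\in I\}$ are linearly independent in $\sym_d$. Fixing a basis of $\sym_d$ and writing $\Phi(x)\in\RR^N$ for the coordinate vector of $xx^\top$—whose entries are the quadratic monomials $x_ax_b$ with $1\le a\le b\le d$—linear independence for the subset $I$ is the non-vanishing of the determinant $D_I:=\det\big[\Phi(v_i)\big]_{i\in I}$, the matrix having the $\Phi(v_i)$ as columns. Each $D_I$ is a polynomial in the coordinates of the vectors $v_i$ ($i\in I$), hence a polynomial in the $dk$ coordinates of $v_1,\dots,v_k$. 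As there are only finitely many subsets $I$, it suffices to show that $\mathbb{P}(D_I=0)=0$ for each fixed $I$.

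The engine of the argument is then standard: if $D_I$ is not the zero polynomial, its zero set is a Lebesgue-null subset of $\RR^{dk}$; and because $v_1,\dots,v_k$ are independent with densities, their joint law is absolutely continuous with respect to Lebesgue measure on $\RR^{dk}$ (the product of the densities), so it assigns probability zero to every null set. Thus the crux is to verify $D_I\not\equiv0$, equivalently that \emph{some} configuration of $N$ vectors yields linearly independent rank-one matrices. I would settle this explicitly: the $N=d+\binom{d}{2}$ vectors $\{e_i:1\le i\le d\}\cup\{e_i+e_j:1\le i<j\le d\}$ produce the matrices $E_{ii}$ and $E_{ii}+E_{jj}+E_{ij}+E_{ji}$ (where $E_{ab}$ are the matrix units), from which one recovers every $E_{ii}$ and every $E_{ij}+E_{ji}$; these span $\sym_d$, so the configuration is an sm-uniqueness set and $D_I$ takes a nonzero value there. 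Hence $D_I\not\equiv0$.

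Combining the pieces, each event $\{D_I=0\}$ has probability zero, and a finite union of null events is null; on the complementary event of probability one, every $N$-subset spans $\sym_d$ and is therefore an sm-uniqueness set, which is exactly the definition of a strong sm-uniqueness set. The main obstacle is really the non-degeneracy step $D_I\not\equiv0$—the rest is bookkeeping—but the explicit configuration disposes of it cleanly. It is worth noting that the hypothesis $k\ge N$ enters precisely here: without $N$ available vectors no $N$-subset exists, and the notion of \emph{strong} is not even defined.
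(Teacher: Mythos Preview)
Your argument is correct. The reduction to the Veronese-type map $\Phi:\RR^d\to\sym_d$, $x\mapsto xx^\top$, the identification of sm-uniqueness with spanning/linear independence of the $\Phi(v_i)$, and the zero-measure argument for the polynomial $D_I$ are all sound; the explicit configuration $\{e_i\}\cup\{e_i+e_j:i<j\}$ cleanly certifies $D_I\not\equiv 0$.

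Note, however, that the present paper does not contain a proof of this theorem: it is merely quoted from \cite{FMR2025} as background for the applications in Sections~\ref{S:GMestimator} and~\ref{S:randompartitions}. There is therefore nothing to compare your argument against within this manuscript. That said, your line of reasoning---vectorising $xx^\top$ and invoking that the zero set of a nonzero polynomial is Lebesgue-null, hence null for any absolutely continuous law---is the natural and standard route for results of this type, and is almost certainly what underlies the cited proof as well.
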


This Theorem allows us to easily generate strong sm-uniqueness sets, by uniformly sampling $ k $ random directions on the unit sphere. With these results in hand we are ready to study the two statistical problems mentioned above.

\section{A new estimator for Gaussian mixtures}\label{S:GMestimator}

\subsection{Introduction}

In this section, we shall develop a new estimator of a multivariate mixture of Gaussian distributions based on two ideas: random projections to obtain one-dimensional mixtures of normal distributions, and one-dimensional estimators of the mixture based on characteristic functions (see \citep{Tr98} and \citep{XK10} for the second idea). We shall compare our random-projection method (RP) with the expectation-maximization (EM) algorithm.

There is a large literature on this subject. See for instance the well-known work in \citep{MV2010}, the recent work \citep{DWYZ23} and the references therein. The article \citep{MV2010} considers general Gaussian multivariate models being $\epsilon$-statistically learnable, that is, $F= \sum_{i=1}^n \lambda_i F_i$  satisfying $\min_i \lambda_i \geq \epsilon$ and $\min_{i \neq j} \frac{1}{2} \int \vert f_i - f_j \vert \geq \epsilon$. In \citep{DWYZ23}, very strong results are shown for the location case, where the covariances are all equal and known.    In contrast, we just prove strong consistency, but this is done for the general case where the covariances are unknown and may be different, in an almost universal framework assuming only a differentiability condition on the characteristic functions, stated before  \citep[Theorem~4.1]{FMR2025} below.

%%%%%%%%%%%%%%%%%%%%%%%%%%%%%%%%%

\subsection{Description of the algorithm} \label{S:algo}

We propose a two-step sequential estimation procedure based on random projections and the empirical characteristic function (ECF). 

Let $P$ be a Gaussian mixture on $\RR^d$, with density 
\[
\sum_{j=1}^m{\lambda_j}\frac{1}{(2\pi \det(\Sigma_j))^{1/2}}
\exp\Bigl(-\tfrac{1}{2}(x-\mu_j)^T\Sigma_j^{-1}(x-\mu_j)\Bigr),
\quad(x\in\RR^d),
\]
where $\lambda_1,\dots,\lambda_m\in[0,1]$, $\sum_{j=1}^m\lambda_j=1$,
$\mu_1,\dots,\mu_m\in\RR^d$, and $\Sigma_1,\dots,\Sigma_m$ are $d\times d$ positive-definite matrices.
We denote $\Lambda:=(\lambda_1,\dots,\lambda_m)$, $\bm{\mu}:=(\mu_1,\dots,\mu_m)$,
$\bm{\Sigma}:=(\Sigma_1,\dots,\Sigma_m)$, and $\Theta:=(\Lambda,\bm{\mu},\bm{\Sigma})$.

Given a unit vector $u\in\RR^d$ and $\theta\in\Theta$, 
$P_{\langle u\rangle}(\theta)$ denotes the projection of the Gaussian mixture with parameters $\theta$ in the direction $u$.  
Given an iid sample $X_1,\dots,X_N$, let $P_{N,\langle u\rangle}$ be the projection of the empirical measure in the same direction.

In the first step, the mixture weights $\Lambda$ are estimated by fitting univariate projected mixtures along several random directions. This exploits the fact that linear projections of a Gaussian mixture are again Gaussian mixtures, with transformed means and variances. The estimation is performed by minimizing the discrepancy between the empirical and theoretical characteristic functions of the projected data. Averaging across multiple random directions yields a global estimator of the weights.

In the second step, with the weights fixed, the projected means and variances are re--estimated. From these, the original mean vectors and covariance matrices of the mixture are reconstructed by solving quadratic optimization problems. 

The overall procedure is summarized in Algorithm~\ref{alg:twostep}.

In what follows, we write $\mathbb{S}^{d-1}$
for the unit sphere in $\RR^d$.
For a direction $u\in \mathbb{S}^{d-1}$ and component $j=1,\dots,m$, we write
\[
\mu^{\mathrm{proj}}_{u,j}:=\langle u,\mu_j\rangle,
\qquad
\tau^2_{u,j}:=\langle u,\Sigma_j u\rangle,
\]
for the projected mean and projected variance of the $j$th component along
direction $u$. Let
\[
Q_u\!\left(\mu^{\mathrm{proj}}_u,\tau_u^2,\Lambda\right)
=
\Bigl(
\widehat Z_u - Z_u\!\left(\mu^{\mathrm{proj}}_u,\tau_u^2,\Lambda\right)
\Bigr)^\top
W
\Bigl(
\widehat Z_u - Z_u\!\left(\mu^{\mathrm{proj}}_u,\tau_u^2,\Lambda\right)
\Bigr),
\]
where $\widehat Z_u$ denotes the empirical vector and
$Z_u(\mu^{\mathrm{proj}}_u,\tau_u^2,\Lambda)$ the corresponding model-implied
vector introduced in Appendix~\ref{apend1}.

\begin{algorithm}[H]
	\caption{Two-step estimation via random projections and ECF/GMM with alignment}
	\label{alg:twostep}
	\begin{algorithmic}[1]
		\Require Integer $m$; dimension $d$; sample $X_1,\dots,X_N$; number of directions $k$
		
		\State Draw $k$ random directions $u_1,\dots,u_k\sim \mathrm{Unif}(\mathbb{S}^{d-1})$
		
	\textbf{Step 1:} 	\For   {$r=1$ \textbf{to} $k$}
		\State Estimate $\bigl(\widehat{\mu}^{\mathrm{proj}}_{u_r,(1)},\widehat{\tau}^{2}_{u_r,(1)},\widehat{\Lambda}_{u_r,(1)}\bigr)$ by
		\State \quad $\arg\min_{\mu^{\mathrm{proj}}_{u_r},\,\tau^{2}_{u_r},\,\Lambda}
		Q_{u_r}\!\left(\mu^{\mathrm{proj}}_{u_r},\tau^{2}_{u_r},\Lambda\right)$
		\EndFor
		
		\State \textbf{Alignment:} choose a pivot direction $u_{r^\ast}$ (for instance, one maximizing the minimum pairwise separation among the estimated projected means)
		\State Order its components by $\widehat{\mu}^{\mathrm{proj}}_{u_{r^\ast},(1),j}$ and treat the ordered triples
		\State \quad $\Bigl(\widehat\Lambda_{u_{r^\ast},(1),j},\widehat{\mu}^{\mathrm{proj}}_{u_{r^\ast},(1),j},\widehat\tau^2_{u_{r^\ast},(1),j}\Bigr)$
		\State as prototypes
		
		\For{$r=1$ \textbf{to} $k$}
		\State Solve an assignment problem matching the triples
		\State \quad $\Bigl(\widehat\Lambda_{u_r,(1),j},\widehat{\mu}^{\mathrm{proj}}_{u_r,(1),j},\widehat\tau^2_{u_r,(1),j}\Bigr)$
		\State to the pivot prototypes, obtaining aligned estimates
		\State \quad $\Bigl(\widetilde\Lambda_{u_r,(1),j},\widetilde{\mu}^{\mathrm{proj}}_{u_r,(1),j},\widetilde\tau^2_{u_r,(1),j}\Bigr)$
		\EndFor
		
		\State Compute $\widehat\Lambda_{(1)}\gets \frac1k\sum_{r=1}^k \widetilde\Lambda_{u_r,(1)}$
		
		\textbf{Step 2:}
		
		\State  Fix $\widehat\Lambda_{(1)}$ and re-estimate
		\State \quad $\left\{\bigl(\widehat{\mu}^{\mathrm{proj}}_{u_r,(2)},\widehat\tau^2_{u_r,(2)}\bigr)\right\}_{r=1}^k$
		\State as described in Appendix~\ref{apend1}
		
		\For{$j=1$ \textbf{to} $m$}
		\State Reconstruct $\widehat\mu_j$  $\widehat\Sigma_j$ and by

\begin{align}
	\label{E:eqmu}
	\hat{\mu}_j &:= \arg\min_{\mu_j\in\mathbb{R}^d}\sum_{r=1}^k
		\left(u_r^\top \mu_j-\widehat{\mu}^{\mathrm{proj}}_{u_r,(2),j}\right)^2  \\ 
	\label{E:eqsigma}
	\hat{\Sigma}_{j} &:= \arg\min_{\Sigma_j\in  \sym_d^+(\mathbb{R})}\sum_{r=1}^k
		\left(u_r^\top \Sigma_j u_r-\widehat\tau^2_{u_r,(2),j}\right)^2,
\end{align}
with $ \sym_d^+(\mathbb{R})$ is the cone of $d\times d$ positive semidefinite matrices.

		\State \Return $\bigl(\widehat\Lambda_{(1)},\widehat\mu,\widehat\Sigma\bigr)$
		\EndFor
	\end{algorithmic}
\end{algorithm}

In Algorithm \ref{alg:twostep}, the parameters are initialized separately in each random one-dimensional projection, and the main driver of the initialization is a $k$-means split with $k=m$. Specifically, for a randomly sampled direction $u$ (normalized to unit length), the data are projected as $Y_i=u^\top X_i$, and we run $k$-means with $m$ clusters on $\{Y_i\}_{i=1}^N$. The resulting $m$ clusters provide a data-driven starting point: the initial projected locations are set to the within-cluster sample means, the projected scales to the within-cluster sample variances (truncated below by a small constant for numerical stability), and the initial mixing weights are taken as the empirical cluster proportions.

To avoid poor starts, we retain only those projections for which all clusters contain at least a prescribed minimum number of observations and the minimum pairwise separation between the projected $k$-means centroids exceeds a fixed multiple of $\mathrm{sd}(Y)$. After ordering components by increasing projected mean, the $k$-means-based initialization may be optionally refined by a single
soft-EM update for the mixing weights and then passed to the ECF/GMM estimation step in that projection.

The reconstruction of each covariance matrix $\Sigma_j$ from the projected
variances is obtained by minimizing the quadratic expression
\[
  f(\Sigma) = \sum_{r=1}^k \bigl(u_r^\top \Sigma u_r - s_{r,j}\bigr)^2,
  \qquad \Sigma \in \sym_d(\mathbb{R}),
\]
where $\sym_d(\mathbb{R})$ denotes the space of real symmetric $d\times d$  matrices.
This objective is always convex in $\Sigma$, but uniqueness of the solution
requires strict convexity, which in turn depends on the geometry of the
directions $\{u_r\}$. The following proposition makes this condition explicit.

\begin{proposition}[Uniqueness of covariance reconstruction]\label{P:uniqueness}
Let $u_1,\dots,u_k \in \mathbb{S}^{d-1}$ and consider
\[
  f(\Sigma) = \sum_{r=1}^k \bigl(u_r^\top \Sigma u_r - s_r\bigr)^2,
  \qquad \Sigma \in \sym_d(\mathbb{R}).
\]
If $\{u_r u_r^\top : r=1,\dots,k\}$ spans $\sym_d(\mathbb{R})$, equivalently
if $\mathrm{rank}(A) = d(d+1)/2$ where the $r$-th row of $A$ is
$\mathrm{vecsym}(u_r u_r^\top)$, then $f$ is strictly convex on $\sym_d(\mathbb{R})$
and admits a unique minimizer. In particular, the 
equation \eqref{E:eqsigma} has a unique solution under the rank condition.
\end{proposition}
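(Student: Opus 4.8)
The plan is to recognize $f$ as an ordinary linear least-squares objective and to reduce the whole statement to the injectivity of a single linear map. First I would observe that, for each $r$, the scalar $u_r^\top \Sigma u_r$ is \emph{linear} in $\Sigma$: writing $\langle A,B\rangle:=\trace(A^\top B)$ for the Frobenius inner product on $\mathbb{S}^d$, the cyclic property of the trace gives $u_r^\top \Sigma u_r=\trace(\Sigma\,u_r u_r^\top)=\langle u_r u_r^\top,\Sigma\rangle$. Defining the linear map $L:\mathbb{S}^d\to\RR^k$ by $(L\Sigma)_r=\langle u_r u_r^\top,\Sigma\rangle$ and setting $s=(s_1,\dots,s_k)$, I can then rewrite $f(\Sigma)=\|L\Sigma-s\|^2$. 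Hence $f$ is a convex quadratic, being the composition of an affine map with the squared Euclidean norm, and its Hessian is the constant, positive-semidefinite operator $2L^\ast L$.

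Strict convexity then holds if and only if $L^\ast L$ is positive definite, equivalently $L$ is injective, equivalently $\ker L=\{0\}$. The second step is to identify $\ker L$ with the spanning hypothesis. By definition $\Sigma\in\ker L$ means $\langle u_r u_r^\top,\Sigma\rangle=0$ for every $r$, i.e.\ $\Sigma$ is Frobenius-orthogonal to each $u_r u_r^\top$, hence to their entire linear span. Under the assumption that $\{u_r u_r^\top:r=1,\dots,k\}$ spans $\mathbb{S}^d$, the only matrix orthogonal to this span is $\Sigma=0$, so $\ker L=\{0\}$ and $f$ is strictly convex. The stated equivalence with the rank condition is then immediate, since the map $\mathbb{S}^d\to\RR^{d(d+1)/2}$ given by $\mathrm{vecsym}$ is a linear isomorphism, so the vectors $\mathrm{vecsym}(u_r u_r^\top)$ span $\RR^{d(d+1)/2}$ exactly when the rows of $A$ attain full rank $d(d+1)/2$; I would also remark that this is precisely the statement that $\{u_r\}$ is an sm-uniqueness set in the sense of Definition~\ref{def:ssmu}.

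For existence and uniqueness of the minimizer I would invoke coercivity. Because $L^\ast L$ is positive definite, $f(\Sigma)\ge c\|\Sigma\|^2-C$ for some $c>0$, so $f(\Sigma)\to\infty$ as $\|\Sigma\|\to\infty$; a strictly convex, coercive, continuous function on the finite-dimensional space $\mathbb{S}^d$ attains its infimum at a unique point, settling the unconstrained case. For the constrained reconstruction in~\eqref{E:eqsigma}, the feasible set $S^{+,d}$ is a nonempty closed convex cone, so restricting the same strictly convex coercive $f$ yields existence (from coercivity together with closedness of the cone) and uniqueness (from strict convexity). Finally, for~\eqref{E:eqmu} the identical least-squares reasoning applies to the linear map $\mu\mapsto(\langle u_r,\mu\rangle)_r$, whose injectivity is equivalent to $\{u_r\}$ spanning $\RR^d$; this holds because an sm-uniqueness set necessarily spans $\RR^d$, as recorded earlier.

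I do not expect a serious obstacle here, as the core argument is the textbook theory of least squares. The only points that genuinely require care are the translation between the two equivalent phrasings of the hypothesis (the Frobenius-span condition versus the rank condition via $\mathrm{vecsym}$), and the passage from the unconstrained problem on $\mathbb{S}^d$ to the cone-constrained problem~\eqref{E:eqsigma}: there one must argue existence separately from coercivity and closedness of $S^{+,d}$, rather than assuming the unconstrained minimizer already lies in the positive-semidefinite cone.
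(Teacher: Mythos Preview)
Your proposal is correct and follows essentially the same route as the paper: both recognize $f$ as a linear least-squares objective and deduce strict convexity from the full-rank (equivalently, spanning) condition, the only cosmetic difference being that you work coordinate-free via the Frobenius inner product while the paper passes to coordinates through $\mathrm{vecsym}$. Your treatment is in fact more complete, as you also supply the coercivity argument for existence and address the cone-constrained problem~\eqref{E:eqsigma} and the mean reconstruction~\eqref{E:eqmu}, which the paper's sketch omits.
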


\begin{proof}[Sketch of proof]
Identifying $\sym_d(\mathbb{R})$ with $\mathbb{R}^{q}$, $q=d(d+1)/2$, via
$\theta=\mathrm{vecsym}(\Sigma)$, we can write
\[
  f(\Sigma) = \bigl\|A\,\theta - s\bigr\|_2^2,
\]
with $A\in\mathbb{R}^{k\times q}$ having $r$-th row $\mathrm{vecsym}(u_r u_r^\top)$.
The spanning condition $\{u_r u_r^\top\}$ spans $\sym_d(\mathbb{R})$ is equivalent to
$\mathrm{rank}(A)=q$, so $A^\top A$ is positive definite. Hence the Hessian of
$f$ with respect to $\theta$ is $2A^\top A\succ 0$, and $f$ is strictly convex,
with a unique minimizer $\theta^\star$, hence a unique $\Sigma^\star$.
\end{proof}

Further details are provided in Appendix~\ref{apend1}.

\begin{remarks}

	(i)  If $\Lambda$ is known or estimated by another mechanism,
	then the estimation procedure starts at Step~2. For example, as in \citep{dBCMM08}, this happens when one starts with a cluster procedure that has already been performed. This allows to have a $\Lambda$-estimator across the group sizes obtained.  
	
	(ii)   Robust estimators can be obtained by replacing the $L^2$-minimization by the $L^1$-one, that is, by solving the equations
	\[
	\hat{\mu}_j = \argmin_{\mu_j} \sum_{r=1}^k \bigl|\langle u_r,\mu_j\rangle - \widehat{\mu}^{\mathrm{proj}}_{u_r,j} \bigr|, \ \ \hat{\Sigma}_{j}= \argmin_{\Sigma_j \in \sym_d^+(\mathbb{R})} \sum_{r=1}^k   \bigl| \langle u_r,\Sigma_j u_r\rangle - \widehat{\tau}^2_{u_r,j} \bigr|,
	\]
	for all $j=1,2, \dots, m$,
	where $\sym_d^+(\mathbb{R})$ denotes the cone of positive semi-definite real $d\times d$ matrices.

	Alternatively,  in the spirit of the proposed median of means, see for instance \citep{DLLO16} and the references therein,
	one can proceed as as follows.
	We split the sample into $L$ disjoint subsamples. For each subsample and $j$ we calculate $\hat \mu_{j,\ell}$ and $\hat \Sigma_{j,\ell}$, $j=1, \dots m$, $\ell=1, \ldots, L$ according to equations (\ref{E:eqmu}) and (\ref{E:eqsigma}),
	and we take the median of the corresponding $L$ values obtained for each $j$. Then we proceed as before.

(iii)  The argmin in (\ref{E:eqmu}) can be solved by ordinary least squares. For the constrained optimization problem in (\ref{E:eqsigma}), we work over the cone $\sym_d^+(\mathbb{R})$ of symmetric positive semidefinite $d\times d$ matrices and use a primal--dual
interior-point method (IPM), which is standard in semidefinite programming. It is important to distinguish existence from uniqueness. The objective
\[
f(\Sigma)=\sum_{r=1}^k \left(u_r^\top \Sigma u_r-\widehat\tau^2_{u_r,(2),j}\right)^2
\]
is convex and continuous on $ \sym_d^+(\mathbb{R})$, so, (\ref{E:eqsigma}) is a convex optimization
problem. However, convexity alone does not imply uniqueness. As established in
Proposition~1, uniqueness of the minimizer holds under the spanning/rank
condition that $\{u_r u_r^\top:r=1,\dots,k\}$ span $\sym_d(\RR)$. If one considers instead the barrier-penalized objective
\[
f_t(\Sigma)=t\,f(\Sigma)-\log\det(\Sigma),
\]
defined on the interior of $ \sym_d^+(\mathbb{R})$, then for each fixed $t>0$ the barrier term enforces strict convexity on the feasible region, and the penalized problem admits a unique minimizer. This uniqueness pertains to the barrier
problem and should be distinguished from uniqueness of the original problem \ref{E:eqsigma}. See, for example, \citep{BV2004,RW2009}.

(iv) While semidefinite programming (SDP) is computationally efficient for small-to-moderate dimensions, various alternatives are available for high-dimensional settings. Examples include first-order methods such as the Alternating Direction Method of Multipliers (ADMM), low-rank matrix approximations when the covariance structure allows, and the exploitation of sparsity or block structure in the projection matrices to reduce computational cost, see for example \citep{JBAS2010} and \citep{WGY2010}.

(v) In general, the problem becomes increasingly challenging as the dimension of the space grows. The parameter space expands substantially when allowing for general covariance matrices, which increases complexity and renders the algorithm more unstable, since one must optimize over symmetric positive definite matrices in addition to the weights and means. Recently, in \citep{YWR2024}, a novel and very general problem is studied that includes the more complex setup of  compound probability distributions under the restriction of identical and isotropic covariances, which for our setting requires that all  covariance matrices be the identity.

(vi)    We note that the algorithm can also be applied to mixtures of multivariate $t$-distributions. In this case, the degrees-of-freedom parameter $\nu$ is assumed to be fixed, known, and common to all mixture components, while the covariance matrices in the Gaussian formulation are replaced by the corresponding scatter matrices.

(vii) It is important to note that Step~1 can be implemented with any other consistent univariate-mixture estimators at each direction $u_r$,
for instance the univariate EM algorithm, instead of the ECF-based criterion.
\end{remarks}

\subsection{A consistency result}

The results in \citep{Tr98} and \citep{XK10} that we will use below are based on a general theorem in \citep{He77},
where the only assumption is that the characteristic function is differentiable. 
More precisely, it is assumed that the regular case holds, namely that $I_n(t)$ can be differentiated under the integral sign.
Proposition~1 in \citep{Tr98}, which is based on the work in \citep{FM1981a}, \citep{FM1981b}, can be stated as follows.

\begin{proposition}
Let $\{t_1, \ldots, t_M\}$ be distinct fixed grid points. Then the estimator $\hat \theta$ of $\theta$   is strongly consistent and asymptotically normally distributed, with a given covariance matrix.
\end{proposition}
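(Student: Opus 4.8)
The plan is to recognize $\hat\theta$ as a minimum-distance (GMM-type) estimator and then verify the regularity hypotheses of the general M-estimation theorems of \citep{He77} and \citep{FM1981a,FM1981b}, from which both conclusions follow. Write the criterion as $Q_N(\theta)=(Z_N-F(\theta))^\top W(Z_N-F(\theta))$, where $Z_N$ stacks the real and imaginary parts of the empirical characteristic function $\hat\phi_N(t_j)=\frac1N\sum_{i=1}^N e^{\mathrm{i}t_j X_i}$ at the fixed grid points, and $F(\theta)$ stacks the corresponding theoretical values $\phi_\theta(t_j)$. Here $\theta_0$ denotes the true parameter.

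First I would establish strong consistency. Since each coordinate of $Z_N$ is an i.i.d.\ average of the bounded functions $x\mapsto\cos(t_j x)$ and $x\mapsto\sin(t_j x)$, the strong law of large numbers gives $Z_N\to F(\theta_0)$ almost surely. The limiting criterion is $Q(\theta)=(F(\theta_0)-F(\theta))^\top W(F(\theta_0)-F(\theta))$, which is minimized at $\theta_0$. The crucial point is identifiability: one must ensure that $\theta\mapsto F(\theta)$ is injective on the chosen grid, so that $\theta_0$ is the \emph{unique} minimizer. Granting this, together with continuity of $F$ and the locally uniform almost-sure convergence $Q_N\to Q$, the standard argmin/continuous-mapping argument yields $\hat\theta\to\theta_0$ a.s.

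For asymptotic normality I would linearize the first-order conditions $D(\hat\theta)^\top W(Z_N-F(\hat\theta))=0$, where $D(\theta)=\nabla_\theta F(\theta)$. Here the regularity (``regular case'') assumption—that $I_n(t)$ may be differentiated under the integral sign—guarantees that $D(\theta)$ exists and is continuous. A multivariate central limit theorem applied to the i.i.d.\ bounded vectors $(\cos(t_j X_i),\sin(t_j X_i))_j$ gives $\sqrt N\,(Z_N-F(\theta_0))\xrightarrow{d}\mathcal N(0,V)$, with $V$ the covariance of one such vector. A first-order Taylor expansion of $F$ about $\theta_0$, combined with the consistency just proved, then yields $\sqrt N(\hat\theta-\theta_0)=(D^\top W D)^{-1}D^\top W\,\sqrt N(Z_N-F(\theta_0))+o_P(1)$, where $D=D(\theta_0)$, and hence $\sqrt N(\hat\theta-\theta_0)\xrightarrow{d}\mathcal N\bigl(0,(D^\top W D)^{-1}D^\top W V W D(D^\top W D)^{-1}\bigr)$, the announced sandwich covariance.

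The main obstacle I expect is the identifiability/nondegeneracy step: verifying that the finite, fixed grid $\{t_1,\dots,t_m\}$ separates parameters, i.e.\ that $\theta\mapsto F(\theta)$ is injective and that the Jacobian $D=\nabla_\theta F(\theta_0)$ has full column rank so that $D^\top W D$ is invertible. Injectivity on a finite grid does not follow automatically from injectivity of the full characteristic-function map; it requires either choosing the grid points appropriately—generically sufficient, since $t\mapsto\phi_\theta(t)$ is analytic—or invoking the specific structure of the univariate Gaussian and $t$ families. Everything else reduces to routine verification of the hypotheses of \citep{He77} and \citep{FM1981a,FM1981b}.
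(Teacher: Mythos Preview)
The paper does not give its own proof of this proposition: it is explicitly quoted as Proposition~1 of \citep{Tr98}, which in turn rests on \citep{He77} and \citep{FM1981a,FM1981b}. So there is no ``paper's proof'' to compare against beyond the references themselves.

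Your sketch is exactly the standard argument behind those references: write the ECF criterion as a quadratic form in the moment discrepancy, get consistency from the SLLN on the bounded moments $\cos(t_j X_i),\sin(t_j X_i)$ plus an argmin argument, and get asymptotic normality from the CLT for those same bounded vectors together with a Taylor expansion of the first-order conditions, yielding the sandwich covariance $(D^\top W D)^{-1}D^\top W V W D(D^\top W D)^{-1}$. This is precisely the Feuerverger--McDunnough/Heathcote route that \citep{Tr98} invokes, so your approach matches the intended one.

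The one point you correctly flag as the genuine obstacle---identifiability and full rank of $D$ on a \emph{finite} fixed grid---is also the one that is not automatic. Analyticity of $t\mapsto\phi_\theta(t)$ only guarantees that generically chosen grids work, not that any fixed grid does; the original references handle this by assumption (and \citep{FM1981b} suggest equally spaced grids $t_j=\tau j$, as the present paper also notes just after the proposition). So your proposal is correct modulo that standing assumption, and aligns with what the cited literature does.
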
 

How to choose the sequence $\{t_1, \ldots, t_M\}$ as well as the size $M$ in an optimal way for a given sample size $N$ is still an open problem. However, in \citep{FM1981b} it is suggested that the sequence should be taken equally spaced, 
namely $t_j = \tau j, \ j=1, \ldots M$ for $\tau \in \mathbb R^+$.

\begin{theorem} \label{teoconsis}
	Under the weak assumption that,
	\[
	I'_n(\theta)= \int \frac{\partial }{\partial \theta}\vert \hat \psi_n(t)- \psi(t, \theta)\vert ^2 dP(t),
	\]
	the estimators $\hat {\Lambda}_{(1)},\hat{\mu}_j,\hat{\Sigma}_j$
	for $j=1,\dots, m$ are strongly consistent.
\end{theorem}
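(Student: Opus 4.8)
The plan is to decouple the two sources of randomness in the procedure --- the random directions $u_1,\dots,u_k$ and the i.i.d.\ sampling $X_1,\dots,X_N$ --- and to reduce the multivariate statement to (a) the already-available univariate ECF consistency in each fixed direction and (b) the uniqueness and continuity of the reconstruction map. First I would fix, once and for all, a realization of the directions. By Theorem~\ref{T:ssmu}, provided $k\ge d(d+1)/2$, with probability one this realization makes $\{u_ru_r^\top:r=1,\dots,k\}$ span $\mathbb{S}^d$; equivalently the matrix $A$ of Proposition~\ref{P:uniqueness}, whose $r$-th row is $\mathrm{vecsym}(u_ru_r^\top)$, has full column rank $d(d+1)/2$. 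I would therefore work on this probability-one event, treat the $u_r$ as fixed, and prove that all estimators converge almost surely as $N\to\infty$.

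Second, in each fixed direction $u_r$ the projected data $\langle u_r,X_i\rangle$ form an i.i.d.\ sample from a univariate $m$-component Gaussian mixture with the same weights $\lambda_j$ and with projected parameters $\upsilon_{u_r,j}=\langle u_r,\mu_j\rangle$ and $\sigma^2_{u_r,j}=\langle u_r,\Sigma_ju_r\rangle$. Under the differentiability hypothesis on $I_n$, the strong-consistency Proposition from \citep{Tr98} (resting on \citep{He77,FM1981a,FM1981b}) applies verbatim, so almost surely the ECF estimates $(\hat\Lambda_{u_r},\hat\upsilon_{u_r},\hat\sigma^2_{u_r})$ converge, \emph{up to a permutation of the $m$ component labels}, to the true triples in that direction.

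Third comes what I expect to be the main obstacle: resolving the label ambiguity consistently across directions, i.e.\ justifying the alignment step. The key is a separation fact. For a distinct pair of components $(\lambda_j,\mu_j,\Sigma_j)\neq(\lambda_{j'},\mu_{j'},\Sigma_{j'})$, the set of unit vectors $u$ for which the projected features coincide is Lebesgue-null: either $\mu_j\neq\mu_{j'}$, and $\langle u,\mu_j\rangle=\langle u,\mu_{j'}\rangle$ only on a hyperplane, or $\Sigma_j\neq\Sigma_{j'}$, and $u^\top(\Sigma_j-\Sigma_{j'})u=0$ only on a measure-zero quadric. Taking the finite union over pairs, the chosen pivot direction $u_{r^\ast}$ generically separates all $m$ prototypes, and the absolutely continuous sampling of the $u_r$ avoids this null set on the probability-one event already fixed. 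Granting that the matching cost identifies the globally invariant weight coordinate $\lambda_j$ (the clean sufficient condition being that the $\lambda_j$, or the pivot features used, are pairwise distinct), a standard stability argument for optimal assignment with convergent, well-separated targets shows that for all $N$ large the assignment problem has a unique optimum equal to the true relabelling. Hence the aligned triples $(\tilde\Lambda_{u_r},\tilde\upsilon_{u_r},\tilde\sigma^2_{u_r})$ converge a.s.\ to the correctly labelled true values; averaging the finitely many aligned weight vectors gives $\hat\Lambda_{(1)}\to\Lambda$ a.s., and with these (consistent) weights held fixed the Step~2 re-estimation of $(\upsilon_{u_r},\sigma^2_{u_r})$ is again covered by the same univariate consistency result.

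Finally I would treat the reconstruction. For each $j$ the maps $\mu_j\mapsto(\langle u_r,\mu_j\rangle)_r$ and $\Sigma_j\mapsto(\langle u_r,\Sigma_ju_r\rangle)_r$ are linear, and by Proposition~\ref{P:uniqueness} the least-squares problems \eqref{E:eqmu}--\eqref{E:eqsigma} have unique minimizers on the fixed spanning event, given in vectorised coordinates by $(A^\top A)^{-1}A^\top(\cdot)$. Since this minimizer depends continuously on the data vectors $(\hat\upsilon_{u_r,(2),j})_r$ and $(\hat\sigma^2_{u_r,(2),j})_r$, the continuous-mapping theorem for almost-sure convergence yields $\hat\mu_j\to\mu_j$ and $\hat\Sigma_j\to\Sigma_j$ a.s. The one point needing care is the positive-semidefinite constraint in \eqref{E:eqsigma}: because the true $\Sigma_j$ is positive definite it lies in the interior of $S^{+,d}$, so for $N$ large the unconstrained least-squares solution is itself positive definite, the constraint is inactive, and the constrained and unconstrained minimizers coincide, leaving the continuous-mapping argument intact. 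Combining the three convergences establishes the strong consistency of $\hat\Lambda_{(1)}$, $\hat\mu_j$ and $\hat\Sigma_j$ for $j=1,\dots,m$.
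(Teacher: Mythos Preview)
Your argument is correct and follows the same two-stage strategy as the paper: invoke the univariate ECF strong consistency of \citep{Tr98} direction by direction to get $\hat\Lambda_{(1)}\to\Lambda$ and $(\hat\upsilon_{u_r},\hat\sigma^2_{u_r})\to(\upsilon_{u_r},\sigma^2_{u_r})$ a.s., and then deduce convergence of the least-squares reconstructions \eqref{E:eqmu}--\eqref{E:eqsigma}. You are in fact more careful than the paper's own proof, which does not address the label-alignment step or the PSD constraint at all, and which handles the reconstruction only informally by observing that the true parameters zero the least-squares objective and that the empirical and true objectives differ by a term tending to zero a.s.; your explicit continuous-mapping argument via the closed-form $(A^\top A)^{-1}A^\top(\cdot)$ and your remark that the constraint is eventually inactive make the same conclusion rigorous.
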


\begin{proof}
	First observe that, from the results in \citep{Tr98} and \citep{XK10}
	for the finite set of estimators for the projected one-dimensional projections, 
	we can derive the strong consistency of $\hat \Lambda_{(1)}$.  
	See \citep[Proposition~1]{Tr98}, where strong consistency and asymptotic normality are established. 
	
	For the next step, using the value of $\hat \Lambda_{(1)}$,
	we consider the estimators of $\hat{\mu}_j$ and $\hat{\Sigma}_j$
	for $j=1,\dots m$ given by the least-squares equations (\ref{E:eqmu}) and (\ref{E:eqsigma}). 
	Since 
	\[
	\sum_{r=1}^k \bigl(\langle u_r,\mu_j\rangle - \mu^{\mathrm{proj}}_{u_r,j} \bigr)^2 =0
	\quad\text{and}\quad
	\sum_{r=1}^k  \bigl(\langle u_r,\Sigma_j u_r\rangle - \tau^2_{u_r,j} \bigr)^2=0
	\]
	for the true values of $\mu_j$ and $\Sigma_j~( j=1,\dots m)$,
	we have
	\[
	\hat{\mu}_j  
	= \argmin_{\mu_j} \sum_{r=1}^k \Bigl(\bigl(\langle u_r,\mu_j\rangle - \widehat{\mu}^{\mathrm{proj}}_{u_r,j}\bigr)^2 -\bigl(\langle u_r,\mu_j\rangle - \mu^{\mathrm{proj}}_{u_r,j}\bigr)^2 \Bigr) \to 0\quad\text{a.s.}
	\]
	and
	\[
	\hat{\Sigma}_{j} =  \argmin_{\Sigma_j \in \sym_d^+(\mathbb{R})} \sum_{r=1}^k \Bigl(\bigl(\langle u_r,\Sigma_j u_r\rangle - \widehat{\tau}^2_{u_r,j}\bigr)^2 - \bigl(\langle u_r,\Sigma_j u_r\rangle - \tau^2_{u_r,j}\bigr)^2 \Bigr) \to 0 \quad\text{a.s.}
	\]
	for all $j=1,\dots, m$. This concludes the proof. 
\end{proof}

The only assumption in our Theorem \ref{teoconsis} is that $I_n(t)$ can be differentiated under the integral sign.
As mentioned in \citep[p. 258]{He77}, the only random variables excluded are those with characteristic functions vanishing outside an interval depending on the parameter $\theta$ and those for which $\psi$ is not differentiable with respect to $\theta$. These conditions are fulfilled for the univariate mixtures that we have under consideration. Indeed, the differentiation condition follows for instance from \citep{P1956}.
	
Therefore our strong consistency result is universal, i.e., it holds without further assumptions for Gaussian or $t$-mixtures. Moreover, the theorem-level requirement in \citep{FMR2025} shows that it suffices to take
\[
k \;\ge\; \frac{1}{2}(2m-1)(d^2+d-2)+1
\]
projection directions, where $m$ denotes the number of mixture components and $d$ the ambient dimension. Hence the required number of directions grows linearly in $m$ and quadratically in $d$.

No restriction is placed on the sample size.

An interesting related but different approach is given by Moitra and Valiant in \citep{MV2010}  for the estimation of the parameters of mixture of multivariate Gaussians (GMM), where they provide an algorithm that has a polynomial running time. 
To state their main result, we provide some definitions. In what follows, we write $[k]:=\{1,2,\dots,k\}$.

\begin{definition}
(i) Given two $d$-dimensional GMMs of $m$ Gaussians, 
$F=\sum_i w_i N(\mu_i, \Sigma_i)$ and $\hat F= \sum_i \hat w_i N(\hat\mu_i, \hat\Sigma_i)$,
we call $\hat F$ an \emph{$\epsilon$-close estimate} for $F$ 
if there is a permutation function $\pi: [k] \to [k]$ such that, for all $i \in [k]$,
\[
\|w_i - \hat w_{\pi[i]}\| \le \epsilon
\quad\text{and}\quad
D(N(\mu_i, \Sigma_i), N(\hat \mu_{\pi(i)}, \hat \Sigma_{\pi(i)} )) \le \epsilon.
\]
(ii) We call a GMM $F= \sum_i w_i F_i$ \emph{$\epsilon$-statistically learnable} if both $\min_i w_i \ge \epsilon$ and $\min_i D(F_i, F_j)\ge \epsilon$.
\end{definition}

The result of Moitra and Valiant is given in the following theorem.
 
 \begin{theorem}[\protect{\cite[Theorem~1]{MV2010}}]
 Given an $d$-dimensional mixture of $m$ Gaussians $F$ that is $\epsilon$-statistically learnable, there is an algorithm that, with probability at least $1-\delta$, outputs an $\epsilon$-close estimate $\hat F$, and the running-time and data requirements of the algorithm (for any fixed $m$) are polynomial in $n$, $1/\epsilon$ and $1/\delta$.
 \end{theorem}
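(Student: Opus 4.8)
The plan is to reduce the $d$-dimensional problem to the one-dimensional one and to attack the univariate case by a robust method of moments. The key structural fact is that any linear projection of a $d$-dimensional Gaussian mixture is again a univariate Gaussian mixture, with projected weights $w_i$, projected means $\langle u,\mu_i\rangle$ and projected variances $u^\top\Sigma_i u$. So I would first build an algorithm that learns a univariate mixture of $m$ Gaussians to accuracy $\eta$ in time and sample size $\mathrm{poly}(1/\eta,1/\delta)$ (for fixed $m$), and then show how to stitch together such univariate estimates, obtained along polynomially many directions $u_1,\dots,u_k$, into estimates of the high-dimensional parameters.

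For the univariate core I would use the method of moments. A mixture of $m$ univariate Gaussians has $3m-1$ free parameters, and its first $O(m)$ raw moments are polynomial functions of these parameters; conversely, matching a bounded number of moments pins the parameters down. Concretely I would estimate the first, say, $4m-2$ moments from the sample (standard concentration gives additive error $\eta$ with $\mathrm{poly}(1/\eta,1/\delta)$ samples, after truncating to a bounded window justified by the statistical-learnability and boundedness assumptions), then search a sufficiently fine grid of candidate parameter vectors and accept any candidate whose theoretical moments match the empirical ones to within the sampling tolerance. The main obstacle --- and the technical heart of the whole argument --- is the \emph{quantitative} inverse bound: I must show that two statistically-learnable univariate mixtures whose first $4m-2$ moments agree to within $\eta$ are necessarily $\mathrm{poly}(\eta)$-close in their parameters. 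This is a stability/condition-number statement for the moment map; proving it requires controlling the smallest singular value of the associated Vandermonde-type system and handling the Gaussian ``deconvolution'', and it is precisely where the separation hypothesis $\min_{i\ne j} D(F_i,F_j)\ge\epsilon$ and $\min_i w_i\ge\epsilon$ enter.

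With the univariate solver in hand, I would lift to $\RR^d$ via random projections, exactly in the spirit of Algorithm~\ref{alg:twostep}. Sampling $k=\mathrm{poly}(d)$ directions $u_r$ and running the univariate solver on each gives, for every component, estimates of $\langle u_r,\mu_i\rangle$ and $u_r^\top\Sigma_i u_r$. The second obstacle is the \emph{correspondence} problem: the univariate solver returns components in an arbitrary order in each direction, so before assembling the high-dimensional parameters I must consistently label the components across directions. I would resolve this by fixing an anchor direction that separates the projected means and tracking components by continuity as the direction is perturbed (equivalently, by solving an assignment problem against the anchor prototypes, as in the Alignment step of Algorithm~\ref{alg:twostep}); a random direction separates the means with good probability because the mixture is statistically learnable. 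Once the labels are consistent, $\mu_i$ is recovered by least squares from the linear constraints $\langle u_r,\mu_i\rangle$, and $\Sigma_i$ from the linear constraints $u_r^\top\Sigma_i u_r$; here I would invoke Proposition~\ref{P:uniqueness} (whose spanning hypothesis holds for generic directions by Theorem~\ref{T:ssmu}) to guarantee that $k$ directions make $\{u_ru_r^\top\}$ span $\mathbb{S}^d$, so both reconstructions are well-conditioned. Finally a union bound over the $k$ directions and over the moment estimates, together with propagation of the per-step $\mathrm{poly}$-errors, yields an $\epsilon$-close estimate with probability $1-\delta$ in total time $\mathrm{poly}(d,1/\epsilon,1/\delta)$ for fixed $m$.
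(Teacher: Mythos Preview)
The paper does not prove this theorem: it is quoted verbatim from \cite{MV2010} as background context for the authors' own estimator, with no argument supplied beyond the citation. So there is no ``paper's own proof'' to compare against.

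That said, your sketch is in fact a faithful high-level outline of the Moitra--Valiant strategy itself: reduce to one dimension by random projections, solve the univariate problem by a method of moments with a quantitative stability bound on the moment map, align component labels across directions, and reconstruct the high-dimensional parameters by least squares. Two points are worth flagging. First, you correctly identify the quantitative inverse moment bound as ``the technical heart of the whole argument'' but do not supply it; this is essentially the entire content of the theorem, and a grid search over parameters does not by itself deliver the required polynomial dependence without that bound (the relevant condition number can be exponentially bad in $m$, which is why the running time is only polynomial for \emph{fixed} $m$). Second, your alignment step --- anchoring on a single direction and tracking by continuity or assignment --- is more delicate than it looks: projected components can become arbitrarily close along some directions even when the mixture is $\epsilon$-learnable in $\RR^d$, and Moitra--Valiant handle this by working with carefully coupled \emph{pairs} of nearby directions rather than independent ones. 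Your outline is therefore a reasonable plan, but as written it is a plan rather than a proof; the paper under review makes no claim to fill these gaps either.
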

  
 If one is only interested on $D(F, \hat F)$, then in \citep[Corollary 2]{MV2010} there is stated a similar result without the restriction that $F$ be $\epsilon$-statistically learnable.

Sharp results for the estimation of univariate finite mixtures are also provided
by Heinrich and Kahn in \citep{HK2018}, where   they study the rates of convergence of the parameters of the mixture,  and,  under some regularity and strong identifiability conditions, find  the optimal local minimax rate of estimation of a univariate mixture with $m$ components. However they do not address the multivariate case.  

	\subsection{Simulations} 
	
	\subsubsection{Example 1.}
	
	Using the above algorithm, we estimate the parameters of a mixture of two bivariate $t$-distributions $t(\nu,\mu,\Sigma)$, where $\nu$, $\mu$, and $\Sigma$ denote the degrees of freedom, location, and scale matrix parameters, respectively. Consider
\[
\begin{aligned}
F &:= \lambda_1\, t(\nu, \mu_{1}, \Sigma_{1}) 
     + (1-\lambda_1)\, t(\nu, \mu_{2}, \Sigma_{2}), \\[0.3em]
\mu_{1} &= (0,0), \quad \mu_{2} = (\eta,0), \\[0.3em]
\Sigma_{1} &= \begin{pmatrix} 1 & 0 \\ 0 & 1/2 \end{pmatrix}, \quad
\Sigma_{2} = \begin{pmatrix} 1/2 & 0 \\ 0 & 1 \end{pmatrix}, \\[0.3em]
\lambda_1 &= 0.3, \quad \nu = 4.
\end{aligned}
\]
Based on $200$ i.i.d.\ draws from $F$ (see Figure~\ref{F:mez}), our goal is to estimate $\mu_{1}$, $\mu_{2}$, $\Sigma_{1}$, and $\Sigma_{2}$. We implement the proposed algorithm by projecting the data onto $k=50$ randomly selected directions. This experiment is replicated $100$ times. In addition, we consider four separability scenarios by letting $\eta \in \{1/2,\,1,\,3/2,\,2\}$, which progressively increases the separation between the component locations.

	\begin{figure}[tbp]
		\centering
		\subfloat{\includegraphics[width=140mm]{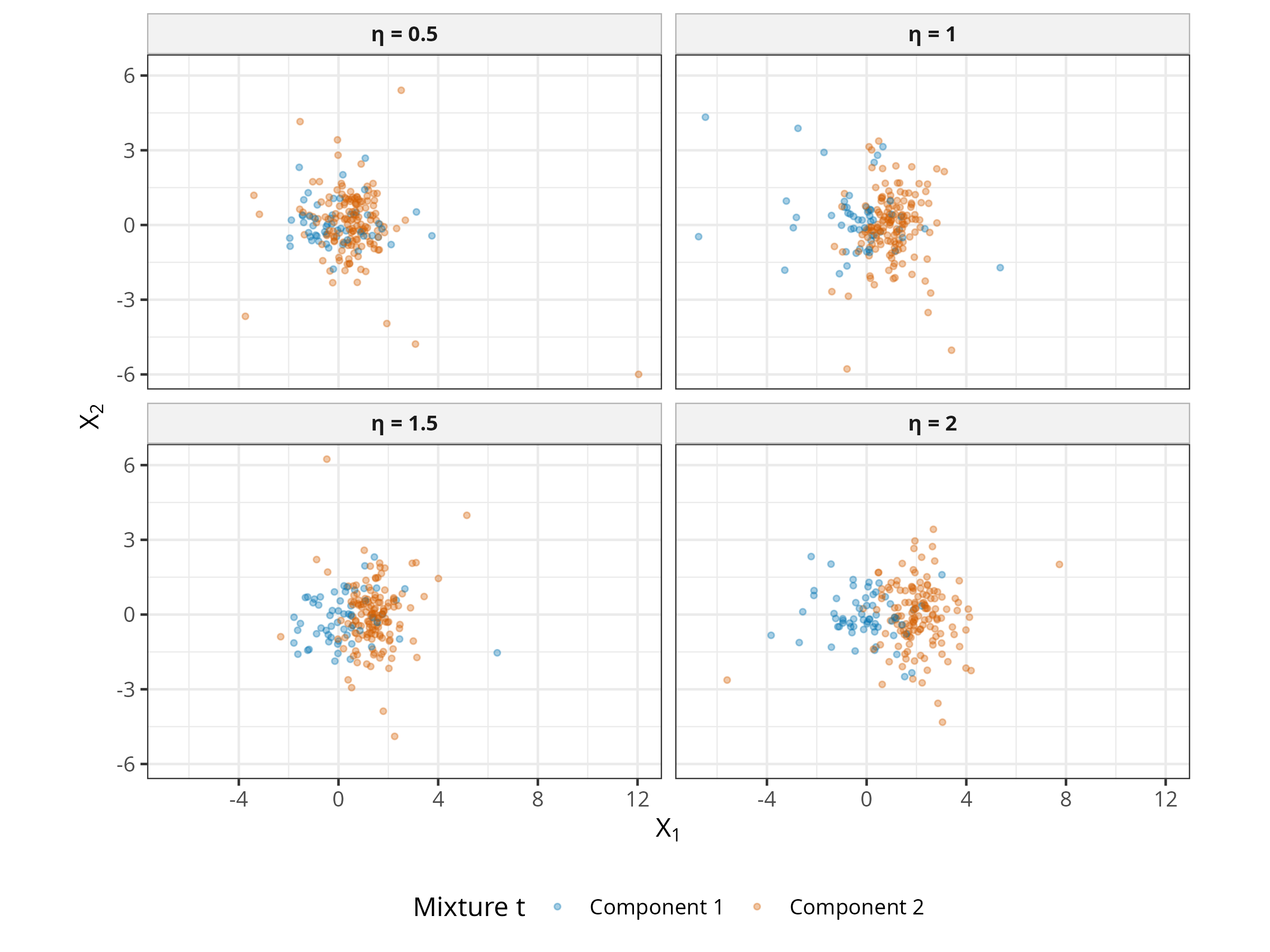}}
\caption{Bivariate $t$-mixture samples for increasing separation $\eta$ (four panels: $\eta \in \{1/2,1,3/2,2\}$).}
		\label{F:mez}
	\end{figure}

	In this Example, we present a small  comparative study of the estimation derived by the random projections method (RP) with respect to the expectation-maximization method for mixtures of t-distributions (we denote \textit{EM-st}).  This last method, developed in \citep{PM00},  is implemented by means of the function \textit{fmmst} of the \textit{EMMIXuskew} package of the R language. In this case the skew parameter was prefixed to $0$.
	
{Table~\ref{Tb:mixla_eta} reports the average parameter estimates across the replicates, together with their corresponding standard deviations. In addition, we report the (replicate-averaged) confusion matrices (see Table \ref{Tb:confusion_all1}) computed over the $100$ Monte-Carlo runs---comparing the true component labels with the posterior (MAP) allocations obtained after parameter estimation, for both the EM-st and the RP procedures. These confusion matrices are provided in Appendix~\ref{A:confusion}.

In each replicate, we compute estimation errors by comparing the estimated parameters with their true values. Specifically, we construct boxplots of (i) the $L^2$-error for the mixing weight $\lambda_1$, (ii) the $L^2$-error for the mean vectors, and (iii) the Frobenius-distance for the scatter matrices. These boxplots are reported for each separability scenario $\eta \in \{1/2,1,3/2,2\}$; see Figure~\ref{F:SS2} for EM-st and RP, respectively. Overall, both algorithms exhibit comparable performance across scenarios. In Figure~\ref{F:SS1}, the boxplots show that the estimation errors for the mixing coefficients $\lambda$ are similar for both methods.

	\begin{figure}[tbp]
		\centering
		\subfloat{\includegraphics[width=57mm]{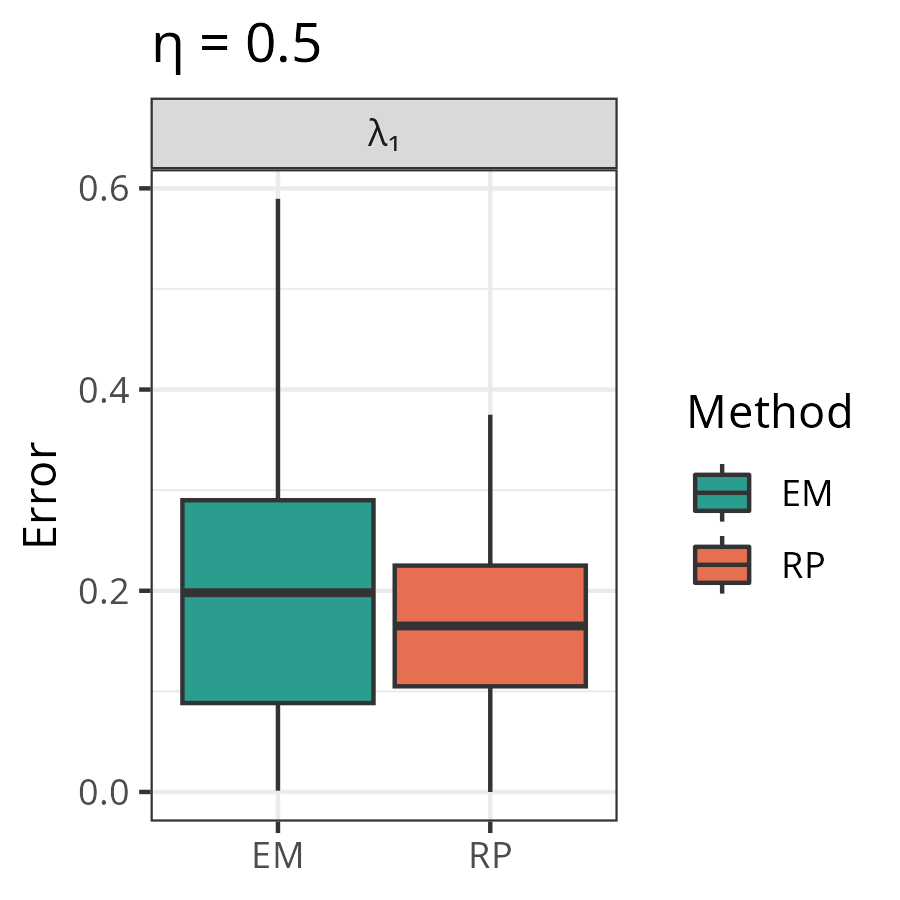}}
		\subfloat{\includegraphics[width=57mm]{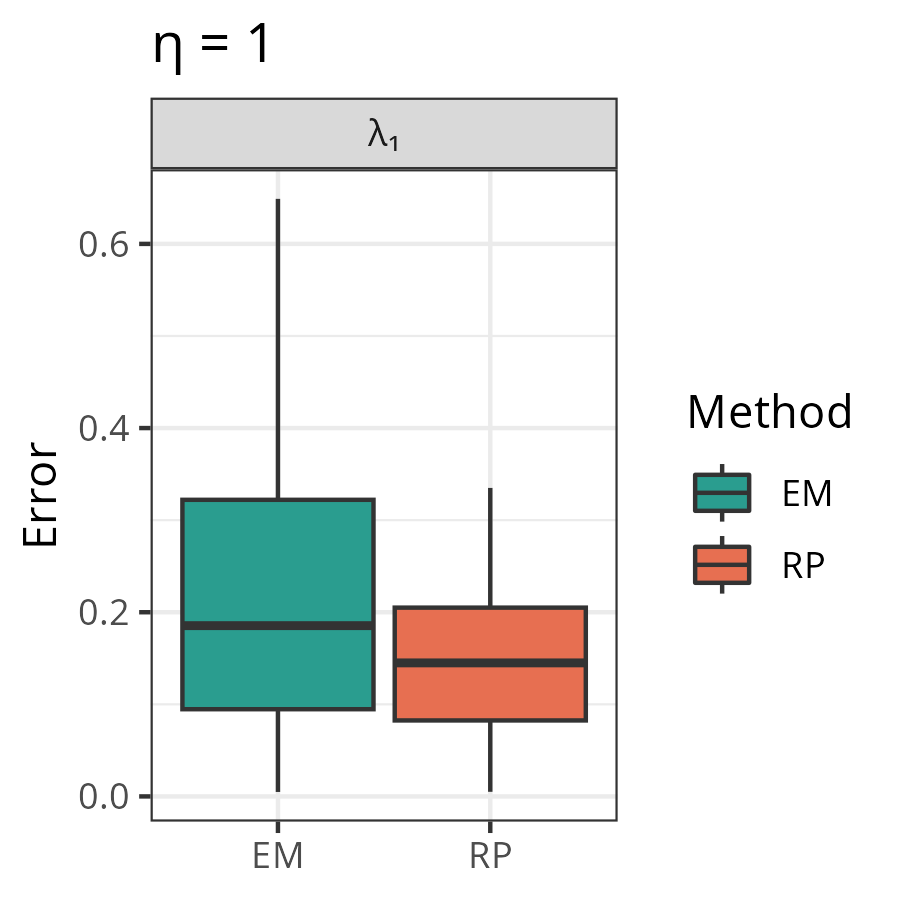}}
		\\
		\subfloat{\includegraphics[width=57mm]{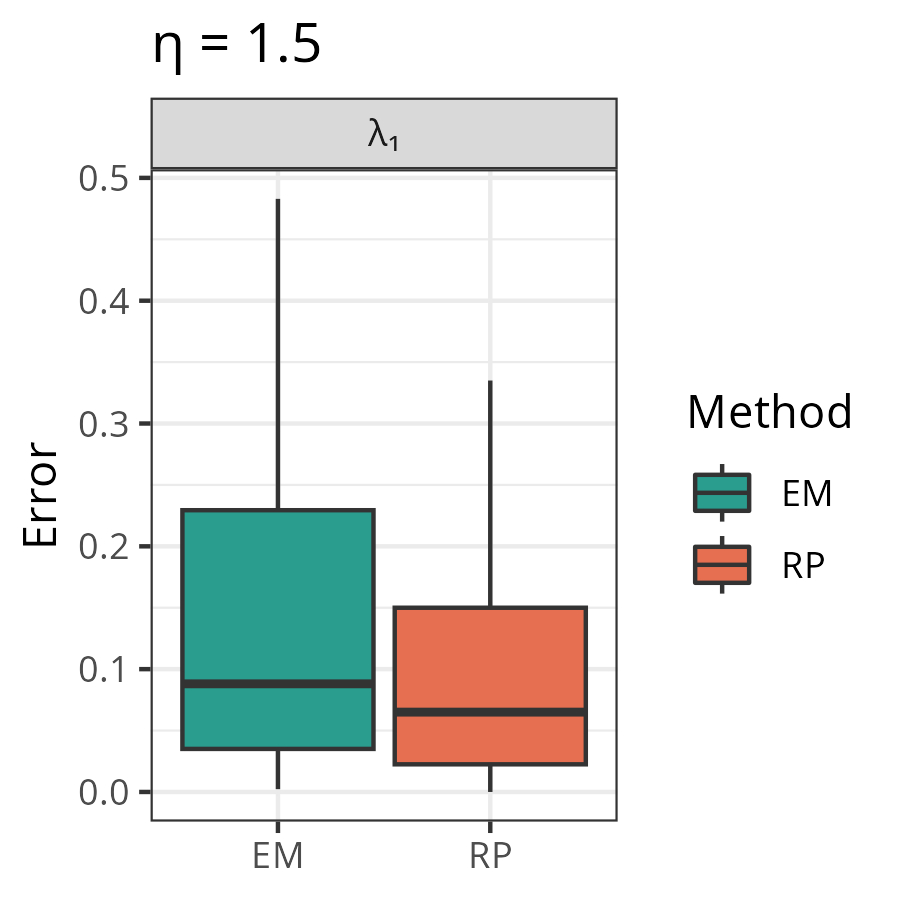}}
		\subfloat{\includegraphics[width=57mm]{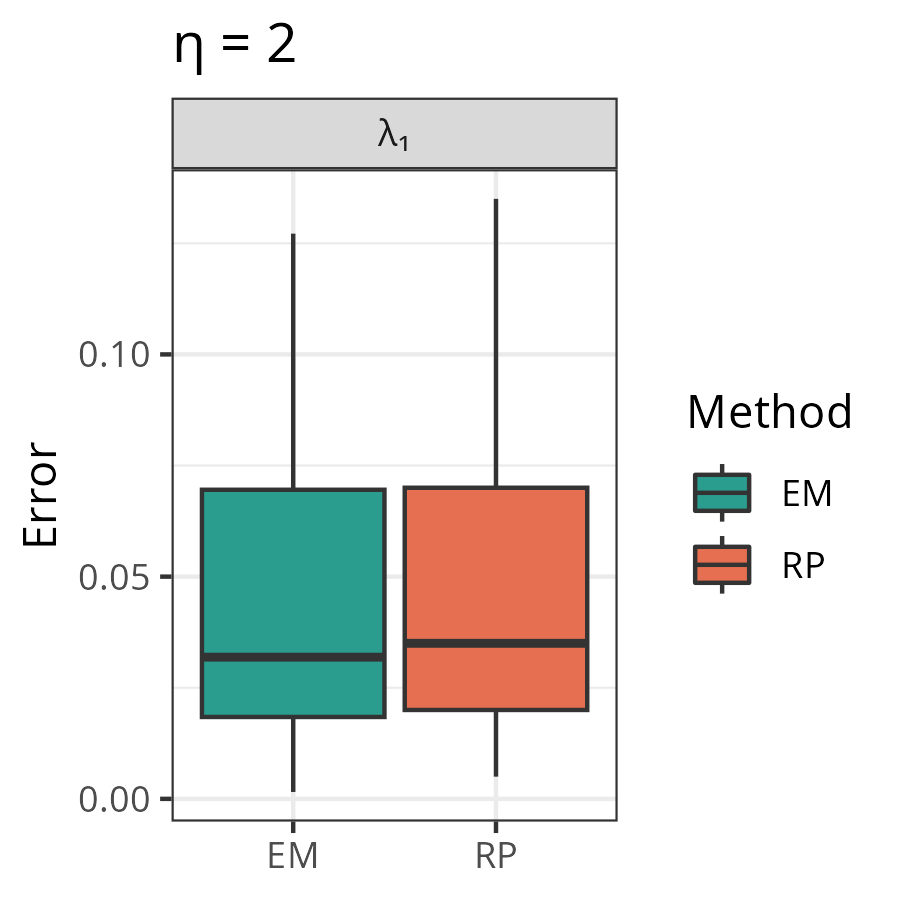}}
\caption{Mixing-weight error for $\lambda_1$: EM-st vs RP across separability scenarios $\eta \in \{1/2,1,3/2,2\}$ (boxplots over 100 replicates).}
		\label{F:SS1}
	\end{figure}

	\begin{figure}[tbp]
		\centering
		\subfloat{\includegraphics[width=67mm]{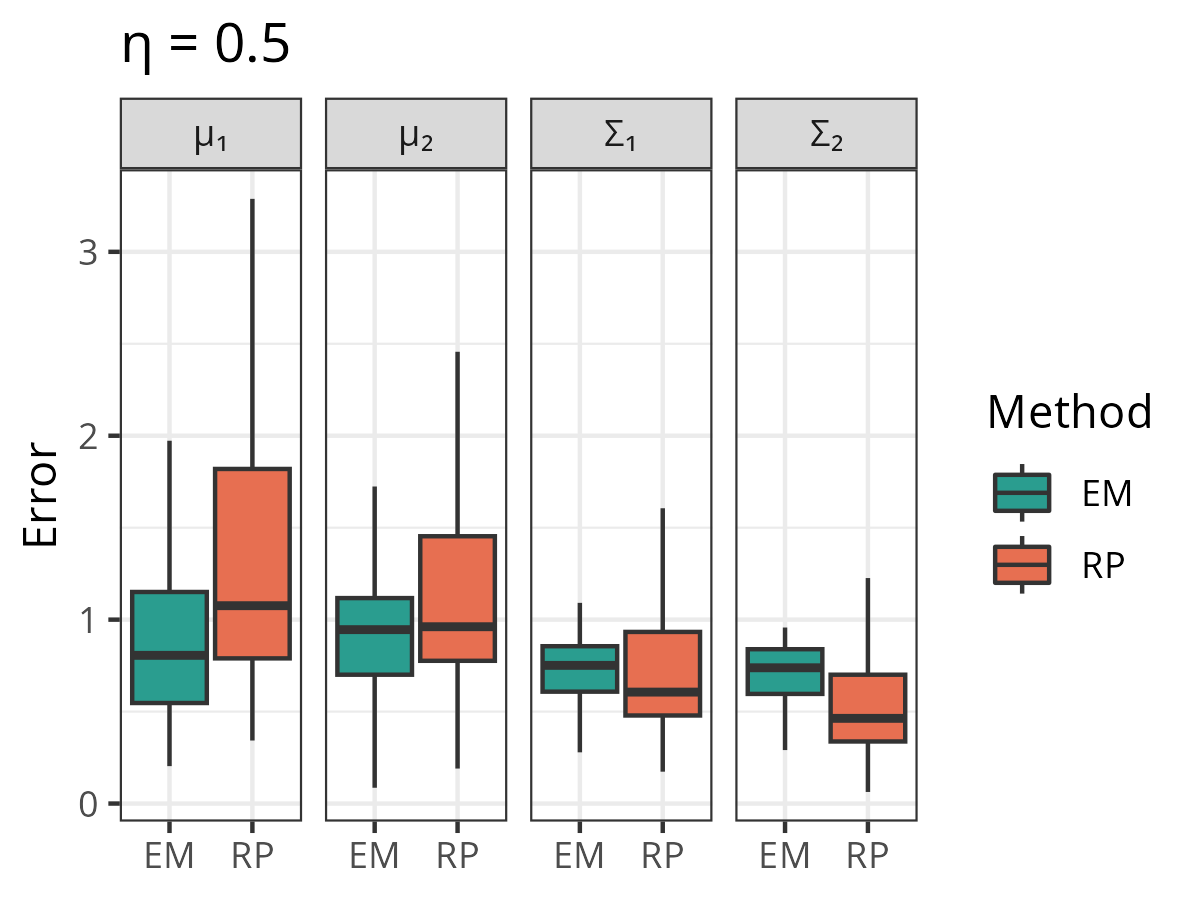}}
		\subfloat{\includegraphics[width=67mm]{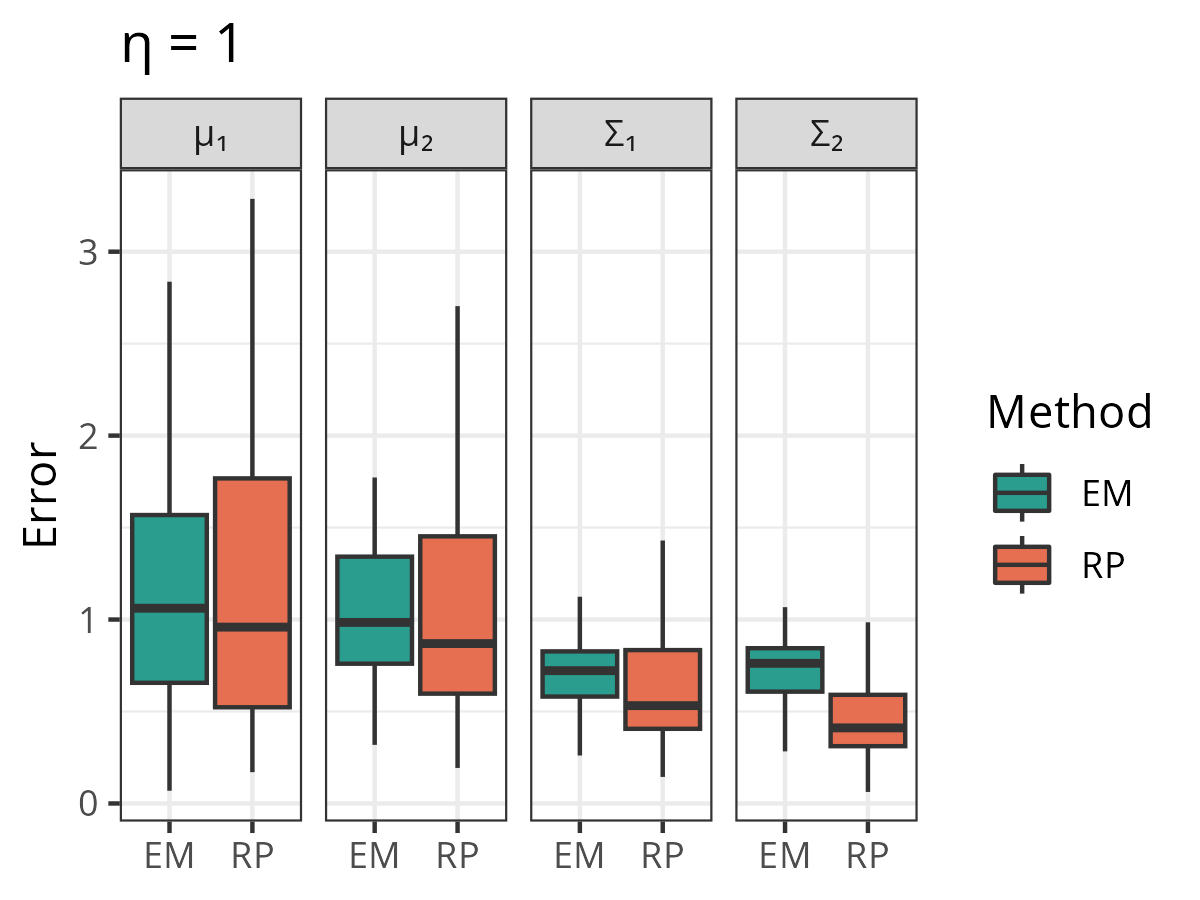}}
		\\
		\subfloat{\includegraphics[width=67mm]{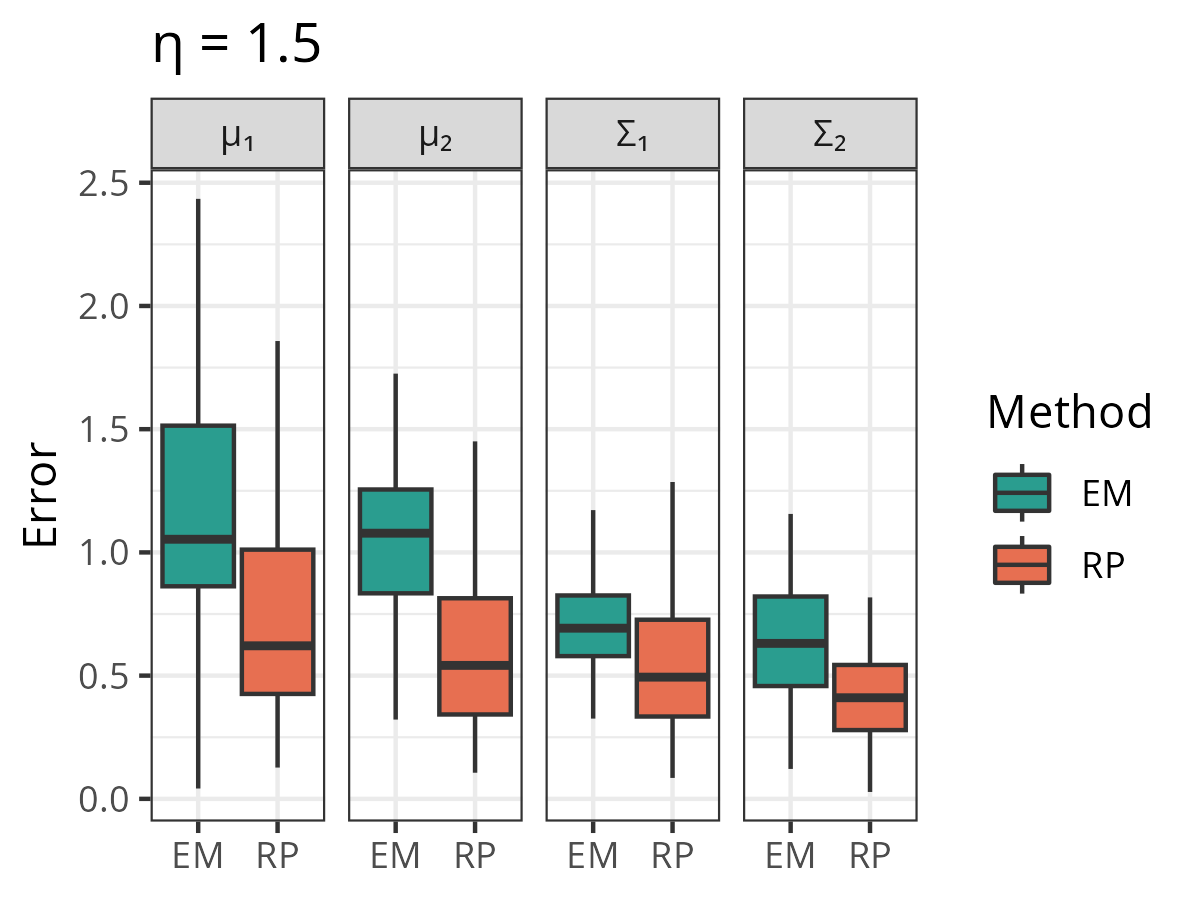}}
		\subfloat{\includegraphics[width=67mm]{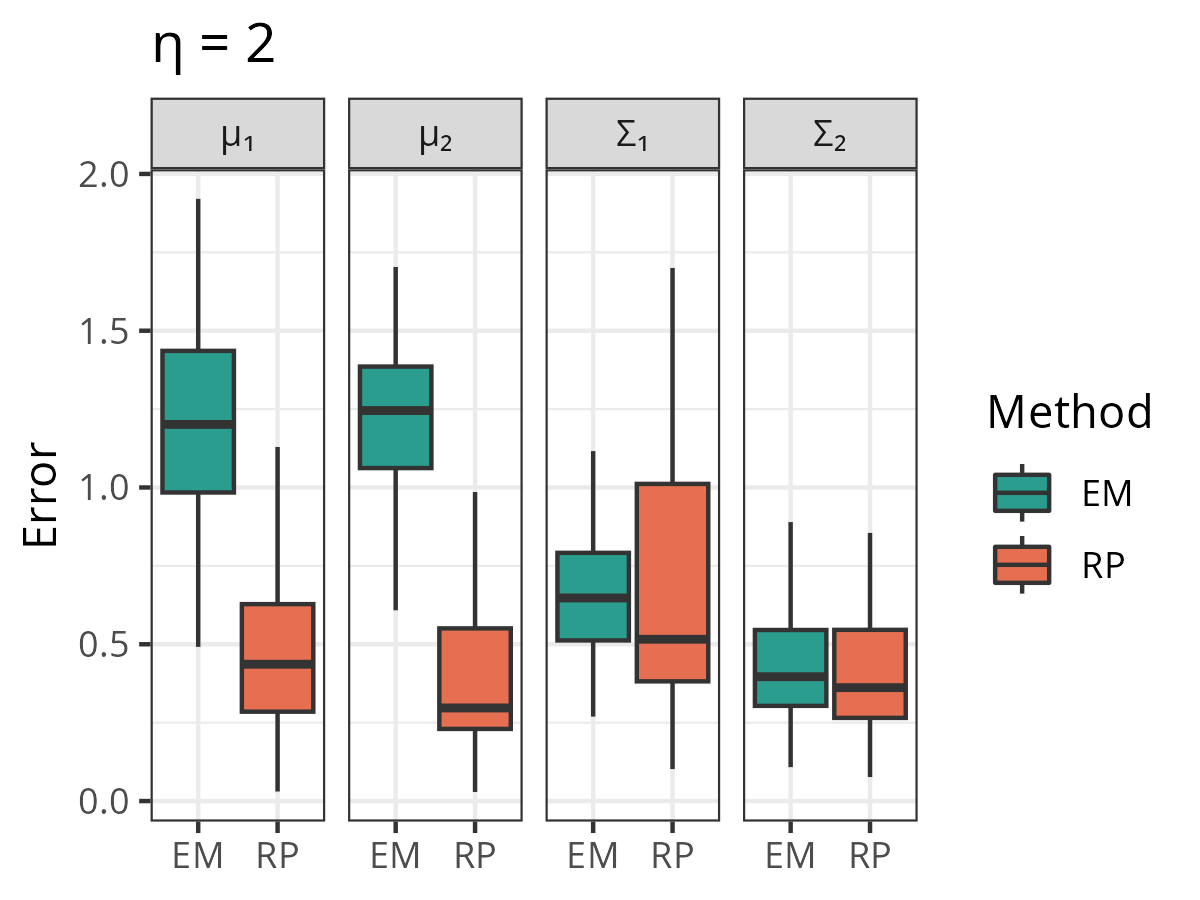}}
\caption{Comparison between the EM-st (EM) and RP methods via boxplots of estimation errors across the four separability scenarios $\eta \in \{1/2,1,3/2,2\}$. Errors are measured using the $L^2$-distance for the mean vectors, and the Frobenius distance for the scatter matrices.}

		\label{F:SS2}
	\end{figure}
	
	%%%%%%%%%%%%%%%%%%%%%%%%%%%%
	
	\subsubsection{Example 2}
	
We repeat the previous simulation setup, but now the observations are generated from a
two-component \emph{bivariate Student $t$}-mixture with $\nu=4$ degrees of freedom,
contaminated by a fraction $\gamma$ of \emph{uniform} noise on an axis-aligned square.
Specifically, for each replicate we generate $N=500$ observations from
\[
F
= (1-\gamma)\Big\{\lambda_1\, t_{\nu}(\mu_{1}, \Sigma_{1})
                 + \lambda_2\, t_{\nu}(\mu_{2}, \Sigma_{2})\Big\}
  + \gamma\,\mathcal U\!\big([0,4]\times[0,4]\big),
\]
where $\lambda_1=0.3$ and $\lambda_2=0.7$, and
\[
\mu_{1}=(0,0),\qquad \mu_{2}=(2,0),
\]
\[
S_{1}=\begin{pmatrix}1&0\\[0.1em]0&1/2\end{pmatrix},\qquad
S_{2}=\begin{pmatrix}1/2&0\\[0.1em]0&1\end{pmatrix}.
\]
The contamination distribution is uniform on the square centered at $(2,2)$ with side length $4$,
i.e., $\mathcal U([0,4]\times[0,4])$. The parameter $\gamma$ controls the proportion of outliers,
and we consider $\gamma\in\{0.05,\,0.10,\,0.15\}$. Figure~\ref{F:mezOut} displays one simulated dataset for each noise scenario considered in our study.  Each panel corresponds to a different contamination level $\gamma\in\{0.05,0.10,0.15\}$ and illustrates a single realization from the bivariate Student $t$-mixture with uniform outliers; hence, the figure provides a visual summary of how the amount of uniform noise increases across scenarios.

	\begin{figure}[tbp]
		\centering
		\subfloat{\includegraphics[width=137mm]{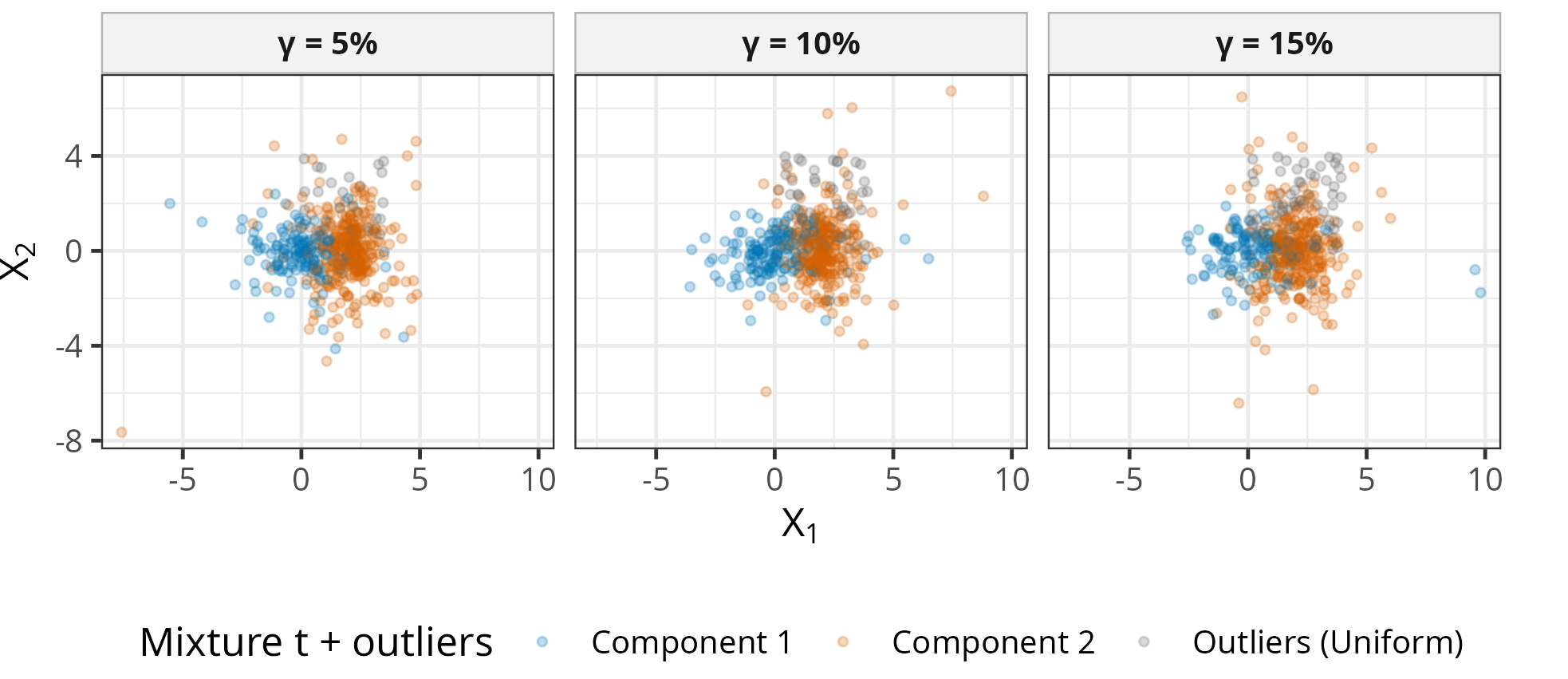}}
		\caption{Simulated bivariate Student $t$-mixture with uniform contamination. Each panel shows a sample of size $N=500$ from a two-component $t_{\eta}$ mixture with $\eta=4$, mixing weights $(\lambda_1,\lambda_2)=(0.3,0.7)$, locations $\mu_1=(0,0)$ and $\mu_2=(2,0)$, and scatter matrices $\Sigma_1=\mathrm{diag}(1,1/2)$ and $\Sigma_2=\mathrm{diag}(1/2,1)$. A proportion $\gamma\in\{0.05,0.10,0.15\}$ of observations is replaced by outliers drawn uniformly from $[0,4]\times[0,4]$.}

		\label{F:mezOut}
	\end{figure}

	In this case we re-estimate the original mixture parameters, and the $L^2$-errors are plotted in Figure \ref{F:SS33}. 
	
	In this framework, our method is compared to a mixture estimation using a robust variant of EM (which we denote by  RobEM). These estimators are introduced in \citep{GR24} and are implemented in the \textit{RGMM} package of the R language. The methodology they propose consists of modifying the EM algorithm, in particular the M-step, where they replace the mean and variance estimates by robust versions derived from the median and the median of the covariance matrix, respectively.
	
In the robust contamination setting, the RP approach exhibits an overall better performance than the robust EM alternative. This improvement is particularly clear for the mixing proportion parameter $\lambda$, whose estimation is more stable under RP across the noise scenarios, as well as for the scatter parameters: the RP reconstructions of the scatter matrices show smaller discrepancies with respect to the true matrices, indicating a higher robustness of RP to the uniform outliers when recovering second-order structure, see Figure \ref{F:SS33} and Figure \ref{F:SS34}.

	\begin{figure}[tbp]
		\centering
		\subfloat{\includegraphics[width=40mm]{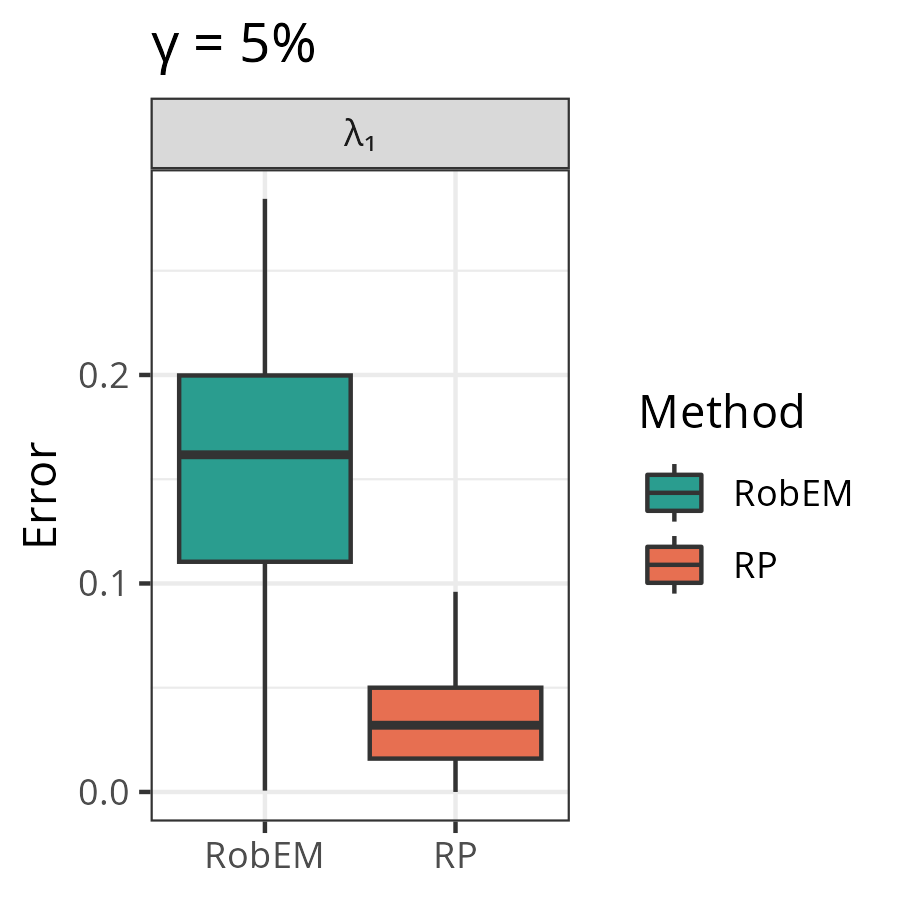}}
		\subfloat{\includegraphics[width=40mm]{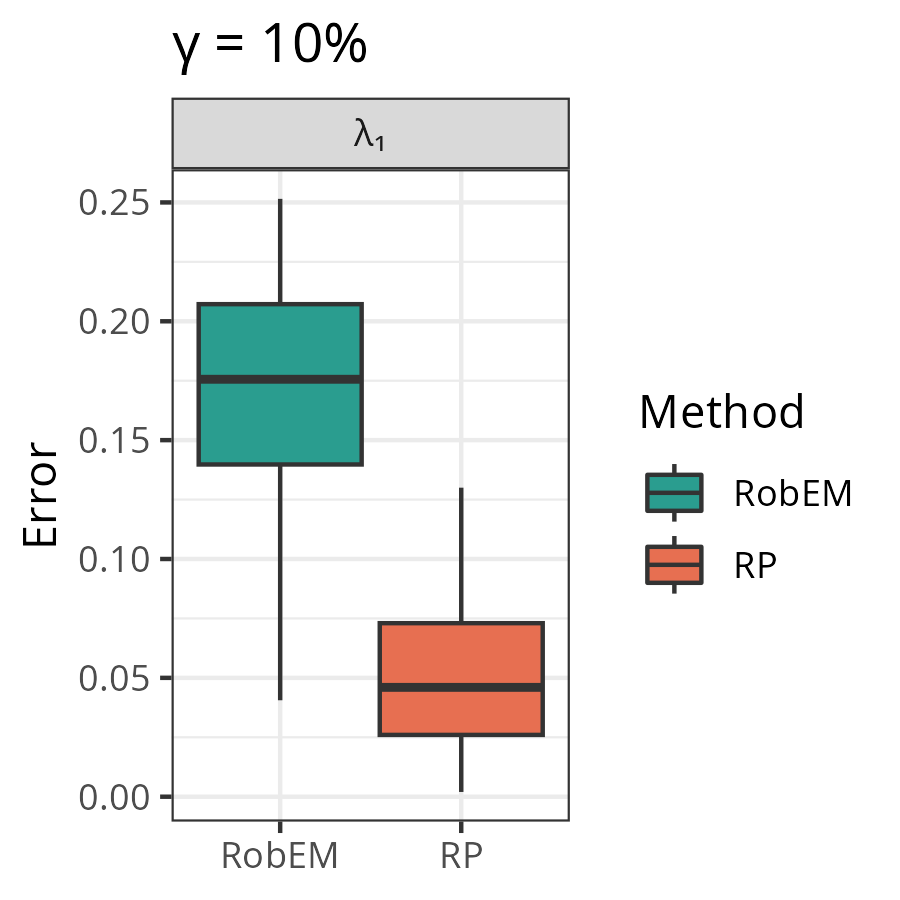}}
		\subfloat{\includegraphics[width=40mm]{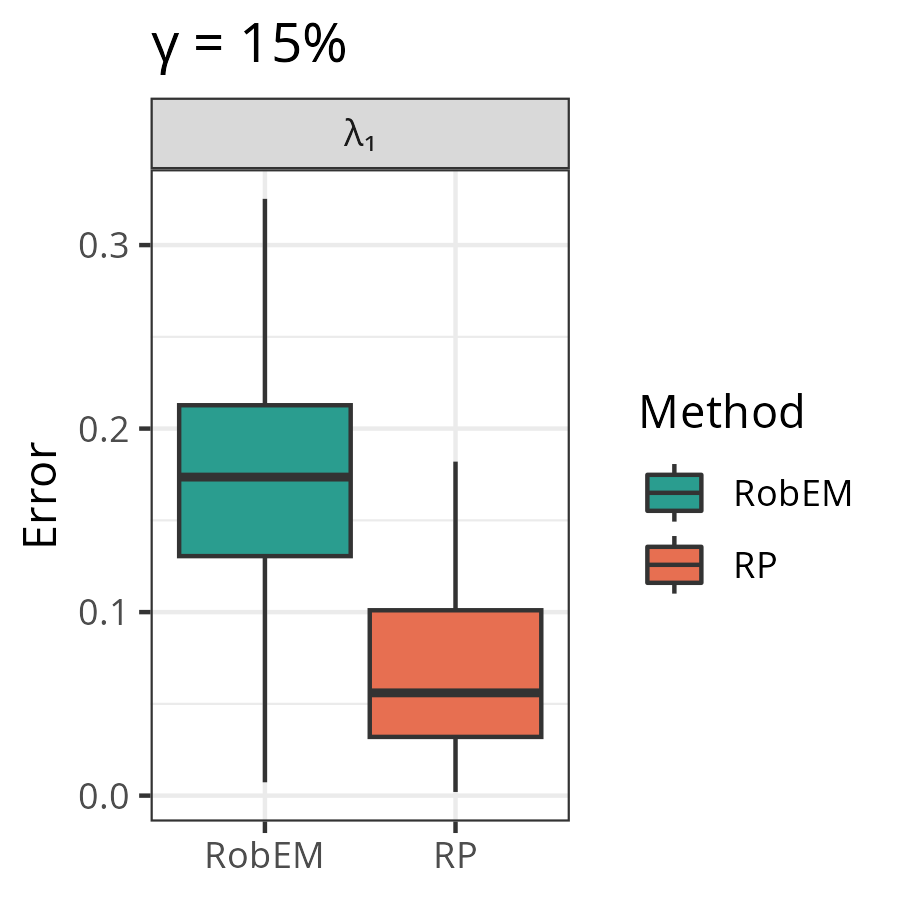}}
\caption{Comparison of estimation errors for the mixing proportion $\lambda_1$ under uniform contamination. Each panel corresponds to a different outlier proportion $\gamma \in \{0.05,0.10,0.15\}$ (left to right), and boxplots summarize the errors over $100$ Monte Carlo replicates for the RobEM and RP methods.}

		\label{F:SS33}
	\end{figure}

	\begin{figure}[tbp]
		\centering
		\subfloat{\includegraphics[width=45mm]{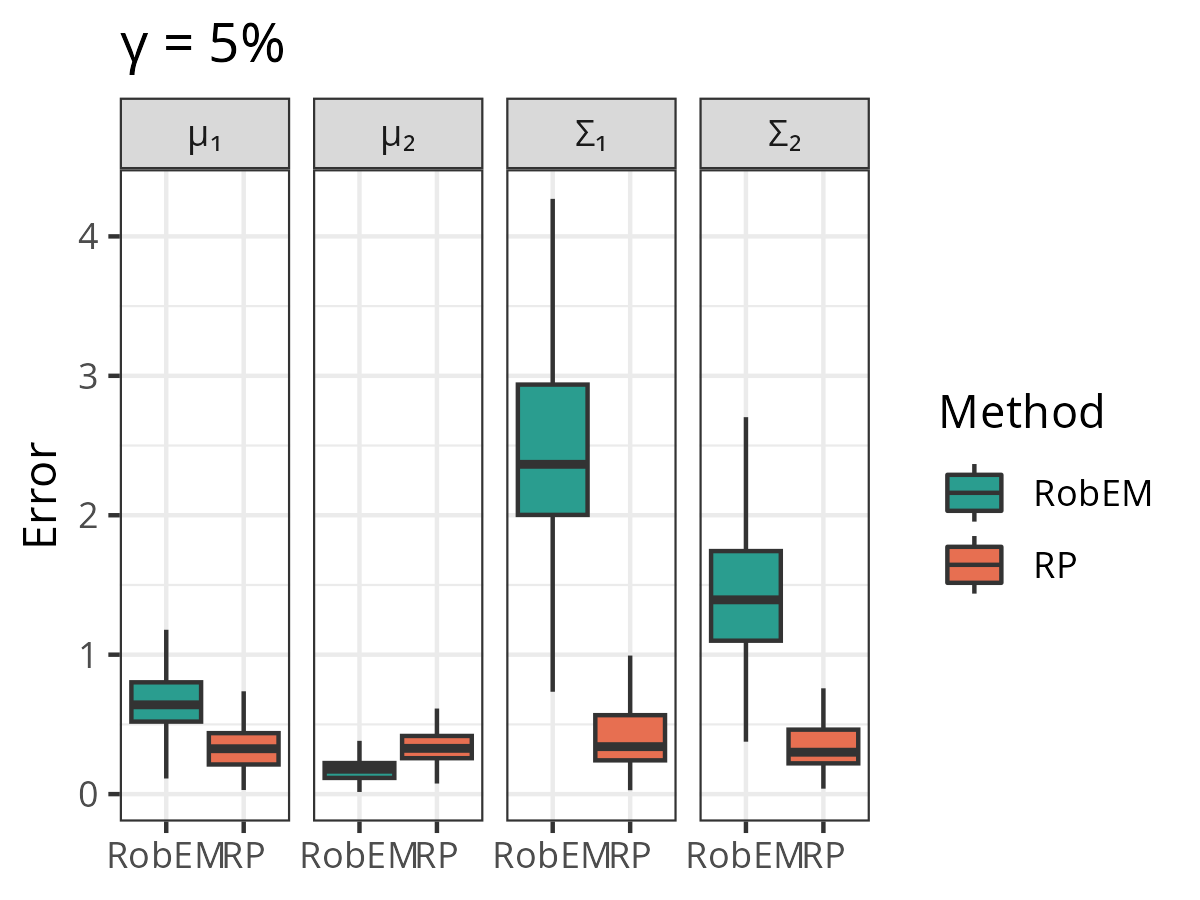}}
		\subfloat{\includegraphics[width=45mm]{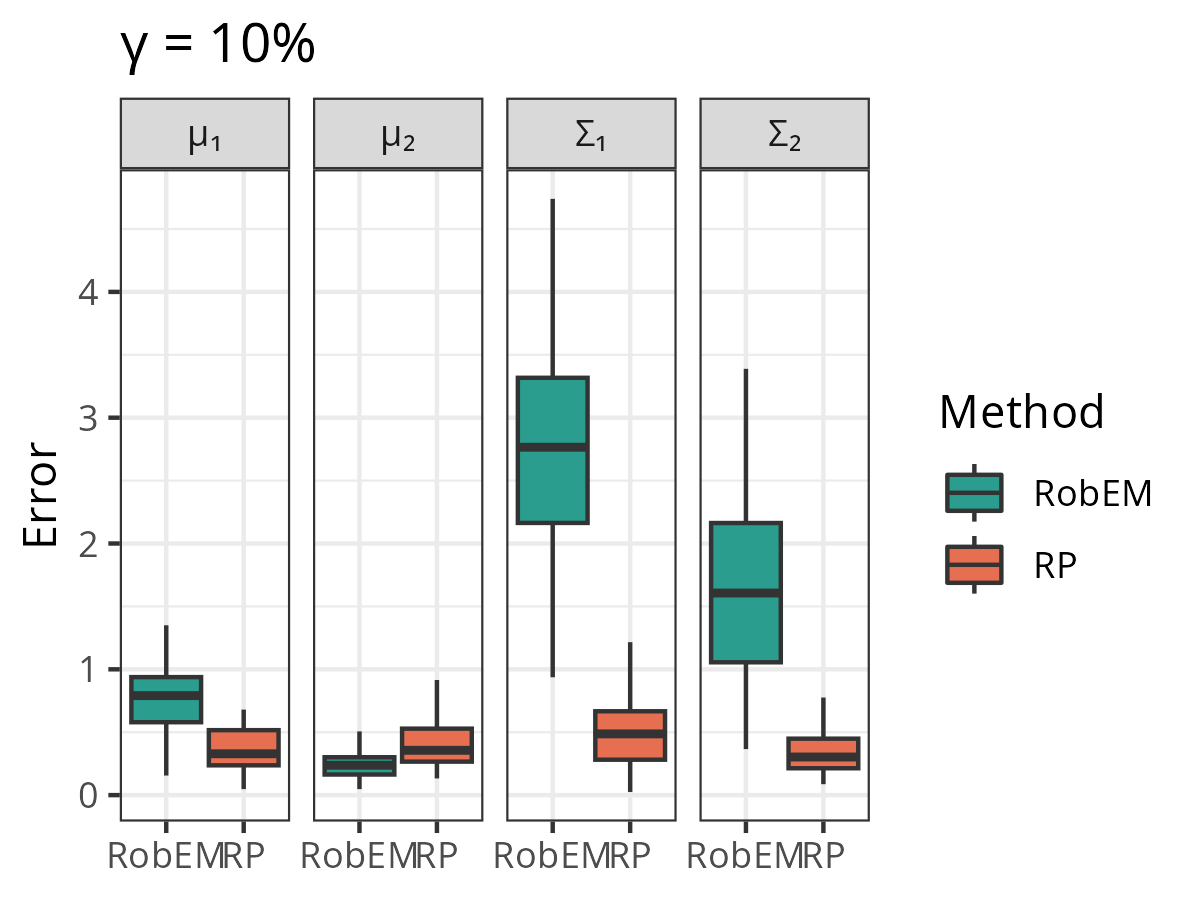}}
		\subfloat{\includegraphics[width=45mm]{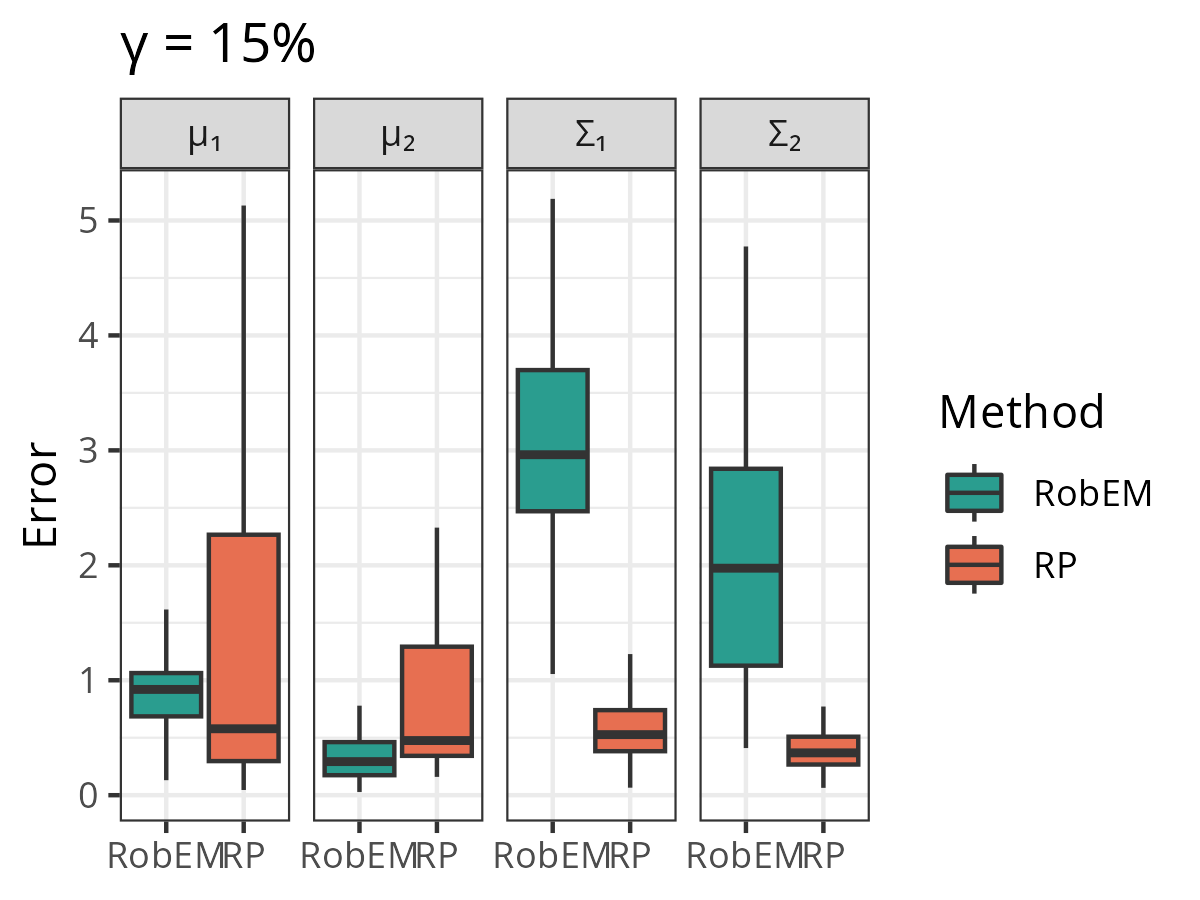}}
\caption{Comparison of estimation errors for the remaining parameters under uniform contamination. Each panel corresponds to a different outlier proportion $\gamma \in \{0.05,0.10,0.15\}$ (left to right), and boxplots summarize the errors over $100$ Monte Carlo replicates for the RobEM and RP methods. Errors are measured using the $L^2$-distance for the mean vectors and the Frobenius distance for the scatter matrices.}

		\label{F:SS34}
	\end{figure}

Table~\ref{Tb:mixla_gamma} reports the average parameter estimates across the replicates, together with their corresponding standard deviations, under the contamination setting with uniform outliers. In addition, we report the (replicate-averaged) confusion matrices (see Table~\ref{Tb:confusion_all2}) computed over the $100$ Monte-Carlo runs---comparing the true component labels with the posterior (MAP) allocations obtained after parameter estimation, for both the robust EM and the RP procedures, see Appendix~\ref{A:confusion}.

	%%%%%%%%%%%%%%%%%%%%%%%%%%%%%%%%%
	
	\subsection{An example with real data}
	
	The relationship between school performance and students' socioeconomic and cultural status has been widely studied, see \citep{Wh82}.  This association is even stronger in the Latin American countries, see \citep{DBM10}.
	
	In Uruguay, this trend can be seen in the educational evaluation studies of the National Institute for Educational Evaluation (INEEd). A representative sample of $6437$ students in the third year of secondary education in the country  in 2022 is considered. The sample is comprised of students attending two types of educational institutions: Public (free and funded by the state) and Private (depending on the payment of tuition by their students or private sources of financing). In the sample, $4852$ and $1585$ students attend public and private schools, respectively.  By means of a multiple-choice test, using Item Response Theory, a score in Mathematics is assigned to the item. In addition, an index of each student's socioeconomic and cultural level is constructed
	from data collected in a personal questionnaire.\footnote{The database and indexes mentioned above are open and available at  {https://www.ineed.edu.uy/nuestro-trabajo/aristas/}}
	
	Figure~\ref{F:SS334}  depicts  a bivariate histogram in hexagonal cells.  This Figure  shows the positive relationship between the two measures, where an asymmetric distribution is observed. 
	
	\begin{figure}[tbp] 
		\centering
		\subfloat{\includegraphics[width=90mm]{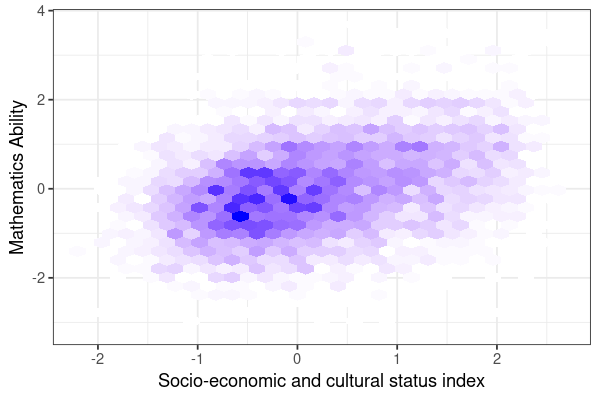}}
		\caption{Hexagonal binning for bivariate data. The plane is tessellated by a regular grid of hexagons (cells). Then the hexagons are plotted using  an intensity of color proportional to the number of points falling in each cell. Higher socio-economic level for smaller values on the $x$-axis, better mathematical ability for larger values on the $y$-axis.}
		\label{F:SS334}
	\end{figure}
	
	The objective is to model this bivariate distribution by means of a mixture of normals and to explain a possible reason for this asymmetry.  Using the method that we have developed, 
	the following estimated parameters of a mixture of two Gaussians are obtained:
	$\hat{\lambda}_1=0.75, \ \hat{\mu}_1=(-0.14,-0.15), \ \hat{\mu}_2=(1.16, 0.47)$,
	and
	\[
	\hat{\Sigma}_{1}= \begin{pmatrix}0.39 & 0.15\\0.15 & 0.65\end{pmatrix},\quad
	\hat{\Sigma}_{2}= \begin{pmatrix}0.16 & 0.08\\0.08 & 0.48\end{pmatrix}.
	\]
	
	These two data patterns are possibly explained by the type of 
	institution that the students attend. In this example, we consider $\lambda$ to be known 
	\textit{a priori}, i.e., the proportion of students attending Public educational institution is $0.75$. 
	If we calculate the vector of means and the matrix of empirical variances and covariances of the sample conditioned to each institution, we obtain
	$\hat{\mu}_{\text{Public}}=(-0.16,-0.180), \ \hat{\mu}_{\text{Private}}=(1.18, 0.51)$,
	and
	\[
	\hat{\Sigma}_{\text{Public}}= \begin{pmatrix}0.45 & 0.10\\0.10 & 0.75\end{pmatrix},\quad
	\hat{\Sigma}_{\text{Private}}= \begin{pmatrix}0.20 & 0.12\\0.12 & 0.59\end{pmatrix}.
	\]	 
	These values are similar to those obtained in the mixture estimation. 

An important aspect of this example is the comparison of student performance in acquired mathematical skills between those attending private schools and those attending public schools. The comparison considers two variables: the score assigned to the item and the socioeconomic and cultural level of the child's home. Comparing the means $\hat \mu_1 =(-0.14, -0.15)$ for those attending public schools with $\hat \mu_2=(1.16, 0.47)$ for those attending private schools, we see a clear drop in the performance of students attending public schools, particularly marked in the first coordinate, but also significant in the second.
This highlights a  problem to be considered by the country's policymakers.

\section{A distributional discrepancy for model-based clusterings}\label{S:randompartitions}

\subsection{Introduction}

The problem of comparing two different partitions of a finite set of observations is a classical topic in the clustering literature. A large number of indices have been proposed for that purpose, including the Rand index,
the adjusted Rand index, the Jaccard index, the Dunn index, the silhouette index, and the Xie--Beni index, among others; see, for instance, \citep{AB73} and  \citep{FM83}.

Our aim in this section is slightly different. We do not introduce a direct metric on partitions. Instead, we compare the fitted mixture distributions associated with two model-based clusterings. Thus, the object of comparison is distributional rather than purely combinatorial. This yields a model-based distributional discrepancy with a natural interpretation in terms of the underlying probability measures.

More precisely, let $P$ and $Q$ denote the fitted mixture distributions associated with two model-based clustering procedures. We compare $P$ and $Q$ through one-dimensional projections and aggregate projection-wise Kolmogorov--Smirnov discrepancies. If the induced mixture distributions are close, then the corresponding clusterings can be regarded as close from a
model-based distributional viewpoint. Throughout this section, the theoretical results are stated under the assumption that both fitted distributions belong to the same mixture class with a common number $m$ of components. Extending the
theory to different numbers of components is beyond the scope of the present paper.

On the other hand, a probabilistic model used quite often in Bayesian statistics as well in cluster analysis 
is to consider that the data are generated by a mixture of multivariate normal distributions. 
Indeed, a popular and well-studied clustering method is to use 
Gaussian mixture models (GMM) for data clustering (model-based clustering). 
One can perform hard clustering or soft clustering. 
For hard clustering the GMM model assigns each datum 
to the component that maximizes the component posterior probability, 
given the datum, and each datum is assigned to only one cluster. 
Soft clustering assigns each instance a probability of belonging to a cluster.
See for instance Gormley et al.\ \citep{GMR23} for details.

From the recent article   \citep{GMR23} we quote:
\textit{``Through its basis in a statistical modeling framework, 
	model-based clustering provides a principled and reproducible approach to clustering. 
	In contrast to heuristic approaches, model-based clustering allows for robust approaches
	to parameter estimation and objective inference on the number of clusters, 
	while providing a clustering solution that accounts for uncertainty in cluster membership \dots''}

Instead of comparing the partitions themselves, we compare the fitted mixture distributions associated with two model-based clustering procedures. More precisely, the objects of comparison are the probability measures $P$ and $Q$, not the label assignments. Accordingly, the quantities $D_k$ and $MA_k$ introduced below should be interpreted as projection-based distributional discrepancy measures between $P$ and $Q$, rather than as direct measures of agreement between partitions.

%%%%%%%%%%%%%%%%%%%%%

As pointed in Baudry et al.\ \citep{BRCLG10},  
several problems appear when performing Gaussian-model-based clustering, 
affecting the number of clusters obtained by BIC. Some nice solutions are proposed.  
However these problems do not affect us. An important aspect of our approach is that, once the fitted model-based distributions have been obtained, the proposed discrepancy can be computed directly, and it is readily applicable in high-dimensional settings.

We begin with the two independent-samples case.

%%%%%%%%%%%%%%%%%%%%%%%%%%%%%%%%%%%%

\subsection{Description of the algorithm} \label{S:algorithms1}

Suppose that we wish to compare two fitted model-based distributions arising from two clustering procedures. We consider two independent samples 
$\aleph_1=\{X_1,\dots,X_\ell\}\subset\RR^d$ and 
$\aleph_2=\{Y_1,\dots,Y_r\}\subset\RR^d$, 
with distributions $P$ and $Q$ respectively, both being mixtures of $m$ multivariate Gaussian or $t$-distributions.

Our method relies on random projections and Kolmogorov--Smirnov (KS) distances. Specifically, each sample is projected onto several random one-dimensional subspaces, empirical distributions are computed, and the KS distances are evaluated between the projected samples. Aggregating these distances provides a projection-based measure of discrepancy between the two induced mixture distributions.

%%%%%%%%%%%%%%%%%%%%%%%%%%%%%%%%%%%%%%%%%%%%%%%%
  
Let $P_\ell$ and $Q_r$ be the empirical distributions of $\aleph_1$ and $\aleph_2$. 

Applying Theorems~\ref{T:gaussmixture} and~\ref{T:tmixture}, the discrepancy between the empirical distributions $P_{\ell}$ and $Q_{r}$, and hence between the underlying fitted distributions $P$ and $Q$, is measured by the average of the projected KS distances along a strong sm-uniqueness set of random directions.

%\subsection{Projected Kolmogorov--Smirnov statistics}

For each direction $u_i$, denote the projected samples by
\[
\mathcal{M}_{1i}=\{\langle u_i,X_j\rangle\}_{j=1}^\ell, 
\qquad 
\mathcal{M}_{2i}=\{\langle u_i,Y_j\rangle\}_{j=1}^r,
\]
with corresponding empirical CDFs $F_{1i}$ and $F_{2i}$.  
The one-dimensional Kolmogorov--Smirnov statistic is
\[
\mathrm{KS}(i)=\sup_{t\in\RR}|F_{1i}(t)-F_{2i}(t)|.
\]

We then define
\[
D_k := \max_{1\le i\le k}\mathrm{KS}(i).
\]

An alternative measure is the average
\[
\mathrm{MA}_k := \frac{1}{k}\sum_{i=1}^k \mathrm{KS}(i),
\]
where $\mathrm{KS}(i)$ is the KS distance between the empirical distributions of $\mathcal{M}_{1i}$ and $\mathcal{M}_{2i}$. 

Theorems~\ref{T:gaussmixture} and~\ref{T:tmixture} show that if this limit is zero, then $P=Q$.  
The asymptotic distributions of $\mathrm{MA}_k$ and $D_k$ are unknown, since projections onto different directions are only conditionally independent.  However, we can use the bootstrap method to proceed in practice.

Alternatively if the sample sizes $\ell$ and $r$ are large, we split them into $k$ disjoint subsamples $\aleph_{split}:=(\aleph_{11}, \aleph_{21}), \ldots ((\aleph_{1k}, \aleph_{2k}))$, and use different subsamples for each direction $u_i$.  
In this case, $\mathrm{MA}_k$ (or $D_k$) becomes an average (or maximum) of $k$ independent one-dimensional KS statistics, which is distribution-free.  
This strategy, however, comes at the cost of reduced statistical power.

The overall scheme is summarized in Algorithm~\ref{A:algo2} for $k$ disjoint subsamples; an analogous scheme can be implemented for the bootstrap method.

\begin{algorithm}[H]
	\caption{Distributional discrepancy via random projections and Kolmogorov--Smirnov distances}
	\label{A:algo2}
	\begin{algorithmic}[1]
		\Require Integers $m,d$; subsamples $\aleph_{split}$ ; fix $k\ge \tfrac{1}{2}(2m-1)(d^2+d-2)+1$
		
		\State Draw $k$ random directions $u_1,\dots,u_k\sim \mathrm{Unif}(\mathbb{S}^{d-1})$
		\For{$i=1$ \textbf{to} $k$}
		\State Project subsamples: $\mathcal{M}_{1i}=\{\langle u_i,\aleph_{1i}\rangle\}$, \quad $\mathcal{M}_{2i}=\{\langle u_i,\aleph_{2i}\rangle\}$
		\State Compute empirical CDFs $F_{1i},F_{2i}$ from $\mathcal{M}_{1i}$ and $\mathcal{M}_{2i}$
		\State $\mathrm{KS}(i)\gets \sup_{t\in\RR}|F_{1i}(t)-F_{2i}(t)|$
		\EndFor
		\State \Return $\mathrm{D}_k \gets \max_{i=1,\dots,k}\mathrm{KS}(i)$ or $\mathrm M_k \gets   \frac{1}{k}\sum_{i=1}^k \mathrm{KS}(i)$
	\end{algorithmic}
\end{algorithm}

\begin{remarks}
\begin{enumerate}

	\item Pre-whitening via $\Sigma^{-1/2}$ is a convenient option to mitigate scale (and unit-of-measure) effects when the practitioner wishes to control for them; in that case, one may apply a whitening transform to the data before computing the projected KS aggregations.

	\item In the one-sample setting with two clustering procedures, we start from a single sample $\aleph=\{X_1,\dots,X_N\}\subset\RR^d$. We split $\aleph$ into two disjoint subsamples $\aleph_1$ and $\aleph_2$, and apply a different clustering procedure to each subsample (e.g., GMMs fitted under different prior specifications). Note that if the data are mapped through a linear transformation (in particular, a one-dimensional projection) while the GMM is fitted in the original space, then the induced distribution on the projection subspace is again a Gaussian mixture, with mean vectors and covariance matrices obtained by applying the same linear map to the fitted parameters. Consequently, for each random direction we may fit a univariate Gaussian mixture to the projected data, using mixture weights given by the cluster proportions, and then compute $D_k$ or $\mathrm{MA}_k$.

	\end{enumerate}
\end{remarks}

%%%%%%%%%%%%%%%%%%%%%%%%%%%%%%%%%%%%%%

  \subsection{Simulations}
  
  Consider two mixtures of two bivariate Gaussians, with parameters
  \begin{align*}
  	F_{1}&:=\lambda_1 N \left(\mu_{1,1}, \Sigma_{1,1} \right)+ (1-\lambda_1) N \left(\mu_{2,1}, \Sigma_{2,1} \right),\\
  	F_{2}&:=\lambda_2 N \left(\mu_{1,2}, \Sigma_{1,2}\right) + (1-\lambda_2) N \left(\mu_{2,2}, \Sigma_{2,2}\right), 
  \end{align*}
   $\mu_{1,1}=\mu_{1,2}:=(1,-1), \mu_{2,1}=(-2,2),   \lambda_1 = 0.5$, and
  $\Sigma_{1,1}=\Sigma_{1,2}:=\begin{pmatrix}1 & 0\\0 & 2\end{pmatrix}$ , $\Sigma_{2,1}:= \begin{pmatrix}3 & 1\\1 & 4\end{pmatrix}$
 and where, given $\eta_1,\eta_2,\eta_3$, we set
  $\mu_{2,2}:=\mu_{2,1} + (\eta_1,\eta_1)$, $\Sigma_{2,2} := (1+\eta_2) \Sigma_{2,1}$, and $\lambda_2:=0.5+ \eta_3$.
  
  The distances between the two mixtures $F_1$ and $F_2$ are calculated for three scenarios:
  \begin{description}
  	\item[Scenario 1:]$\eta_1$ on a grid of values in $[0,1]$ and  $\eta_2=\eta_3=0$, 
  	\item[Scenario 2:]$\eta_2$ on a grid of values in $[0,2]$ and  $\eta_1=\eta_3=0$, 
  	\item[Scenario 3:]$\eta_3$ on a grid of values in $[0,0.5]$ and  $\eta_1=\eta_2=0$.
  \end{description}
  
  \begin{figure}[tbp]
  	\centering
  	\subfloat{\includegraphics[width=65mm]{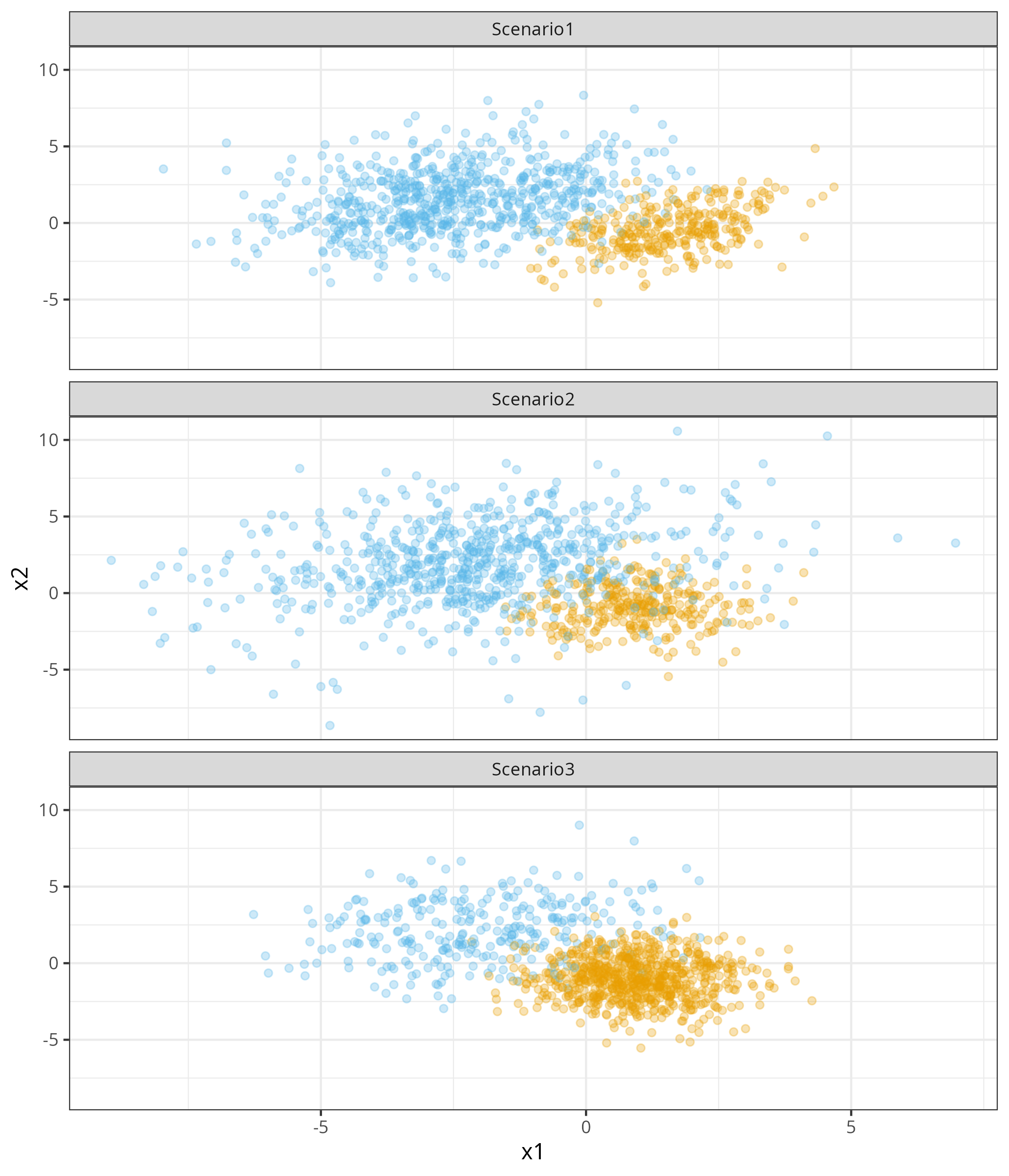}}
  	% \label{F:vis}
\caption{Representative realization in each scenario comparing mixtures $F_1$ and $F_2$: Scenario~1 varies the location shift $\eta_1$ (with $\eta_2=\eta_3=0$), Scenario~2 varies the scale factor $\eta_2$ (with $\eta_1=\eta_3=0$), and Scenario~3 varies the mixing-weight perturbation $\eta_3$ (with $\eta_1=\eta_2=0$).}

  	\label{F:vis}
  \end{figure}
  
\begin{figure}[tbp]
  	\centering
  	\subfloat[Scenario 1]{\includegraphics[width=40mm]{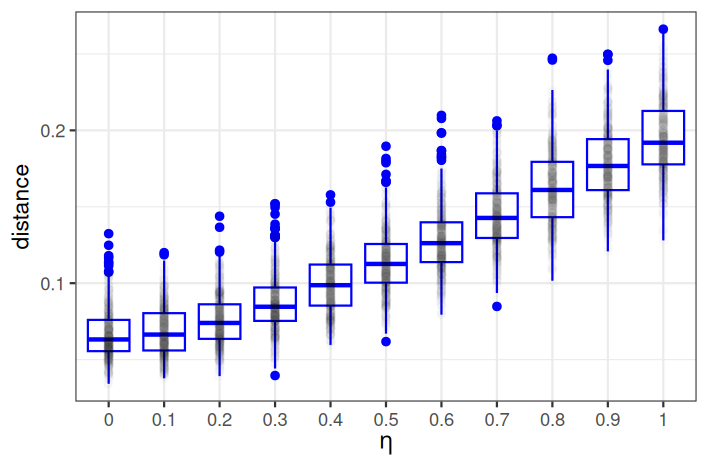}
  	\includegraphics[width=40mm]{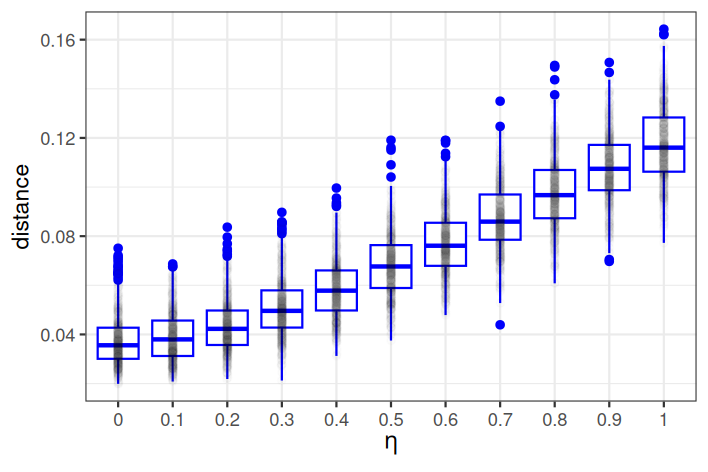}  }
  	
  	\subfloat[Scenario 2]{\includegraphics[width=40mm]{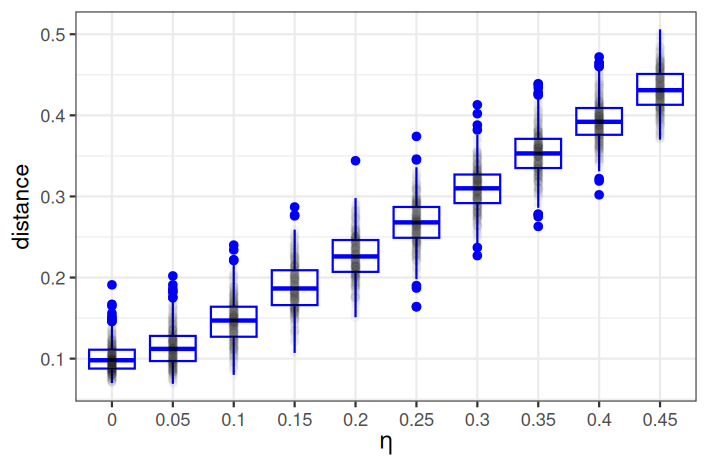}
  	\includegraphics[width=40mm]{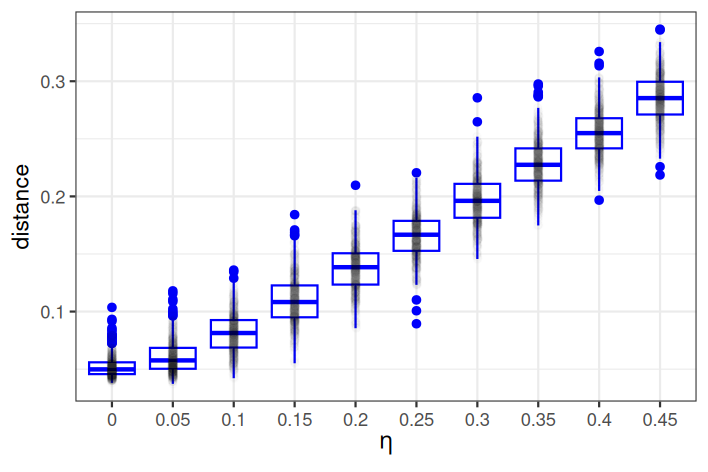} }
  	
  	\subfloat[Scenario 3]{\includegraphics[width=40mm]{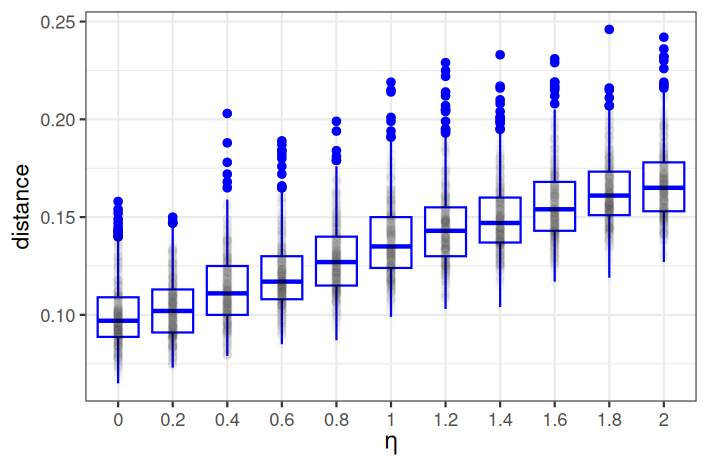}
  	\includegraphics[width=40mm]{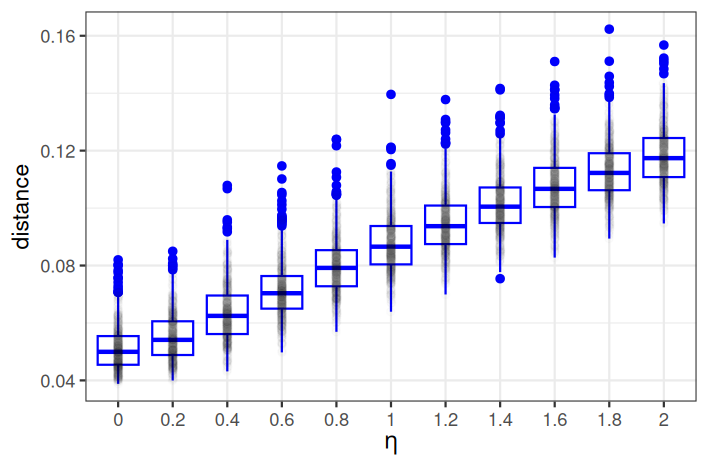} } 
% \label{F:orden}
% \label{F:orden}
% \label{F:orden}
\caption{Agreement between mixtures $F_1$ and $F_2$ using random projections: boxplots of $D_k=\max_{1\le i\le k}\mathrm{KS}(i)$ (left) and $\mathrm{MA}_k=\frac1k\sum_{i=1}^k \mathrm{KS}(i)$ (right), over 500 replicates with $k=100$ directions and 500 draws from each mixture per replicate. Scenario~1 varies location ($\eta_1$), Scenario~2 varies scale ($\eta_2$), and Scenario~3 varies mixing weights ($\eta_3$).}
  	\label{F:orden}
\end{figure}

In Figure~\ref{F:vis}, for improved visualization, one representative simulation per scenario is displayed, corresponding to \(\eta_1 = 1\), \(\eta_2 = 2\), and \(\eta_3 = 1/2\) in Scenarios 1, 2, and 3, respectively. The maximum aggregation measure \(D_k\)  quantifies the largest observed separation between the mixtures \(F_1\) and \(F_2\) across the $k$ projected KS comparisons. This measure is particularly sensitive to pronounced discrepancies (whether in location, dispersion, or component weights), thereby providing a robust indicator of distinct distributional features.

Figure~\ref{F:orden} presents, for each scenario, boxplots of the $500$ replicates of \(D_k\), obtained by simulating 500 observations from each mixture and repeating the procedure 500 times. When \(\eta_1=\eta_2=\eta_3=0\), the mixtures coincide and \(D_k\) attains minimal values. As the mixtures diverge, \(D_k\) exhibits a clear increasing trend, faithfully reflecting the growing separation.

The same analysis was carried out using the  average aggregation measure  \(\textit{MA}_k\), obtaining similar results, as shown in the right column of Figure~\ref{F:orden}.

\section{Conclusions and future work}

The article begins with a review of random-projection methods in statistics.

Then we consider two important statistical problems: that of estimating for mix-
tures of multivariate normal distributions and mixtures of t-distributions, and that of
measuring a discrepancy between mixture distributions induced by two different
model-based clusterings.

A model-based distributional discrepancy is introduced. Rather than introducing a direct metric on partitions, we consider the
distance between the mixture distributions associated with them, which provides a distributional comparison between the corresponding fitted model-based distributions.

 Instead of looking at the number of matchings, we look at the distance between the mixture distributions associated with them,   which provides more information about the discrepancy between the associated mixture distributions.

The relationship between school performance and students' socioeconomic and cultural status is analyzed in a real-data example. We find two different patterns that are possible, explained by the type of institution that the students attend : public or private.

The number of directions $k$ into which we project is not a smoothing parameter that should be chosen in an optimal way. In practice we suggest to use a larger value for $k$ (without increasing too much the computational time) than the one provided by Theorem~\ref{T:gaussmixture}, which corresponds to the case when the mixtures are known, whereas we are dealing with empirical distributions which are close to the unknown underlying distributions but not exactly equal. Depending on the problem, among the directions chosen at random it may appear some directions which are close to bad ones. Increasing the value of $k$ solves the problem. In our experience after a reasonable value the performance stabilizes. 

The two parts of the manuscript are linked by a common projection-based representation for Gaussian/$t$-mixtures: the same finite family of directions that supports identifiability and reconstruction in Section~\ref{S:GMestimator} also yields a natural distributional notion of partition agreement in Section~\ref{S:randompartitions}, obtained by aggregating one-dimensional discrepancies across directions.

Some more computationally oriented work, which includes different mixtures of distributions, sample sizes and number of clusters, has been   omitted from the present manuscript.
Also, rates of convergence and the asymptotic distribution of the estimators remain as interesting open problems to be addressed in future work.

	%%%%%%%%%%%%%%%%%%%%%%%%%%%%%%%%%%%%%%%%%%%%

\section*{Acknowledgments}
We are grateful to the four anonymous referees,  for their thoughtful suggestions and detailed feedback, which led to substantial improvements in the presentation and overall quality of the manuscript.

Fraiman and Moreno were supported by grant FCE-3-2022-1-172289 from ANII (Uruguay), the Program for the Development of Basic Sciences--PEDECIBA (Uruguay), and 22520220100031UD from CSIC (Uruguay). Ransford was supported by NSERC Discovery Grant RGPIN-2020-04263.

\begin{appendices}
\begin{appendixcompact}

\section{Algorithm~1: Methodological aspects }
\label{apend1}

This appendix details the implementation of Algorithm~\ref{alg:twostep}.

%=========================================================
\subsection{Step 1: Estimation of the mixing weights via one-dimensional ECF fits}
\label{apend1:step1}

Let $P$ be an $m$-component Gaussian mixture on $\RR^d$ with parameters
$\Theta=(\Lambda,\bm{\mu},\bm{\Sigma})$, where
$\Lambda=(\lambda_1,\dots,\lambda_m)$, $\sum_{j=1}^m\lambda_j=1$,
$\bm{\mu}=(\mu_1,\dots,\mu_m)$, and $\bm{\Sigma}=(\Sigma_1,\dots,\Sigma_m)$.

Fix a unit direction $u\in\mathbb{S}^{d-1}$ and define the projected data $Y_i(u):=\langle u,X_i\rangle$ with $i=1,\dots,N.$

Under the model, $Y_i(u)$ follows a univariate Gaussian mixture with
component-specific projected means and variances
\[
\mu^{\mathrm{proj}}_{u,j} := \langle u,\mu_j\rangle,
\qquad
\tau^2_{u,j}:=\langle u,\Sigma_j u\rangle,
\qquad j=1,\dots,m,
\]
that is, $P_{\langle u\rangle}
= \sum_{j=1}^m \lambda_j\,\mathcal{N}\!\big({\mu^{\mathrm{proj}}_{u,j}},\tau^2_{u,j}\big).$

\paragraph{Characteristic functions.}
The (model) characteristic function of the projected mixture is
\[
\psi_u(t;\mu^{\mathrm{proj}}_u,\tau_u^2,\Lambda)
:=\sum_{j=1}^m \lambda_j
\exp\!\left(i t\,\mu^{\mathrm{proj}}_{u,j} - \tfrac{1}{2}t^2\,\tau^2_{u,j}\right),
\]
where $\mu^{\mathrm{proj}}_u = (\mu^{\mathrm{proj}}_{u,1},\ldots,\mu^{\mathrm{proj}}_{u,m})$ and
$\tau_u^2=(\tau^2_{u,1},\dots,\tau^2_{u,m})$.
Its empirical counterpart (ECF) is $\hat\psi_{u,N}(t)
:=\frac{1}{N}\sum_{i=1}^N \exp\!\big(i t\,Y_i(u)\big)
=\frac{1}{N}\sum_{i=1}^N \exp\!\big(i t\,\langle u,X_i\rangle\big).$

\paragraph{Grid and moment vector.}
Fix grid points $t_1,\dots,t_M$ (e.g., $t_\ell=\tau\,\ell$ as in \citep{FM1981b}).
Define the empirical moment vector
\[
\hat Z_u
:=\big(\Re \hat\psi_{u,N}(t_1),\dots,\Re \hat\psi_{u,N}(t_M),\
      \Im \hat\psi_{u,N}(t_1),\dots,\Im \hat\psi_{u,N}(t_M)\big)^\top,
\]
and the model moment vector
\begin{align*}
Z_u(\mu^{\mathrm{proj}}_u,\tau_u^2,\Lambda)\\
&:=\big(\Re \psi_u(t_1;\mu^{\mathrm{proj}}_u,\tau_u^2,\Lambda),\dots,\Re \psi_u(t_M;\cdot),\
      \Im \psi_u(t_1;\cdot),\dots,\Im \psi_u(t_M;\cdot)\big)^\top.
\end{align*}

\paragraph{Weighted least-squares / GMM criterion.}
For a positive semidefinite weighting matrix $W\in\RR^{2M\times 2M}$,
we estimate $(\mu^{\mathrm{proj}}_u,\tau_u^2,\Lambda)$ by minimizing
\begin{equation}
\label{E:Qu}
Q_u(\mu^{\mathrm{proj}}_u,\tau_u^2,\Lambda)
:=\big(\hat Z_u - Z_u(\mu^{\mathrm{proj}}_u,\tau_u^2,\Lambda)\big)^\top
   W
   \big(\hat Z_u - Z_u(\mu^{\mathrm{proj}}_u,\tau_u^2,\Lambda) \big),
\end{equation}
over $\Lambda$ in the simplex and $\tau^2_{u,j}>0$.
Following \citep{Tr98,XK10}, $W$ may be estimated using HAC-type procedures
(e.g.\ \citep{NW86}), yielding a generalized method-of-moments estimator
in the sense of \citep{Ha82}.

\paragraph{Multiple directions and label alignment.}
Draw $u_1,\dots,u_k\sim\mathrm{Unif}(\mathbb{S}^{d-1})$ independently and compute,
for each $r=1,\dots,k$, the one-dimensional estimate $({\widehat{\mu}^{\mathrm{proj}}_{u_r,(1)}},\hat\tau ^2_{u_r,(1)},\hat\Lambda_{u_r,(1)}).$

Since each projected fit is performed independently, component labels may be
permuted across directions. We align labels as follows.

Choose a pivot direction $u_{r^\ast}$ (e.g.\ the one maximizing the separation
between the estimated projected means) and order its components by increasing
$\widehat{\mu}^{\mathrm{proj}}_{u_{r^\ast},(1),j}$.
For each $r$, match the $m$ triples
$(\hat\lambda_{u_r,(1),j},{\widehat{\mu}^{\mathrm{proj}}_{u_r,(1),j}},\hat\tau^2_{u_r,(1),j})$
to the pivot triples by solving a minimum-cost assignment problem
(e.g.\ Hungarian algorithm), producing aligned estimates
$(\widetilde{\mu}^{\mathrm{proj}}_{u_r,(1)},\widetilde{\tau}^2_{u_r,(1)},\widetilde{\Lambda}_{u_r,(1)})$.
Finally, define the Step~1 weight estimator by averaging, $\hat\Lambda_{(1)}:=\frac{1}{k}\sum_{r=1}^k \tilde\Lambda_{u_r,(1)}.$

For $t$-mixtures, the projected model remains a univariate $t$-mixture with
projected locations ${\mu^{\mathrm{proj}}_{u,j}}=\langle u,\mu_j\rangle$ and projected scales
determined by $\langle u,\Sigma_j u\rangle$; Step~1 is identical, replacing
$\psi_u(\cdot)$ by the corresponding $t$-mixture characteristic function.

%=========================================================
\subsection{Step 2: Re-estimation of projected means/variances and reconstruction in $\RR^d$}
\label{apend1:step2}

With $\Lambda$ fixed at $\hat\Lambda_{(1)}$, we re-estimate, for each direction
$u_r$, the projected location and variance parameters by minimizing the same
criterion \eqref{E:Qu} over $(\mu^{\mathrm{proj}}_u,\tau_u^2)$ only. Denote the resulting
aligned estimators by
\[
{\widehat{\mu}^{\mathrm{proj}}_{u_r,(2)}}=(\widehat{\mu}^{\mathrm{proj}}_{u_r,(2),1},\dots,\widehat{\mu}^{\mathrm{proj}}_{u_r,(2),m}),
\qquad
\hat\tau^2_{u_r,(2)}=(\widehat{\tau}^2_{u_r,(2),1},\dots,\widehat{\tau}^2_{u_r,(2),m}).
\]

\paragraph{Reconstruction of mean vectors.}
For each component $j=1,\dots,m$, we reconstruct $\mu_j\in\RR^d$ via least squares:
\begin{equation}
\label{E:rec_mu_app}
\hat\mu_j
:=\argmin_{\mu\in\RR^d}\ \sum_{r=1}^k\big(\langle u_r,\mu\rangle-\widehat{\mu}^{\mathrm{proj}}_{u_r,(2),j}\big)^2.
\end{equation}
Let $U\in\RR^{k\times d}$ be the matrix with $r$-th row $u_r^\top$ and let
$\hat v_j=(\widehat{\mu}^{\mathrm{proj}}_{u_1,(2),j},\dots,\widehat{\mu}^{\mathrm{proj}}_{u_k,(2),j})^\top$.
When $U^\top U$ is invertible (which holds with probability one for i.i.d.\ random
directions as soon as $k\ge d$), the solution is $\hat\mu_j=(U^\top U)^{-1}U^\top \hat v_j.$

\paragraph{Reconstruction of covariance matrices.}
For each component $j=1,\dots,m$, we reconstruct $\Sigma_j\in$
by solving the convex quadratic program
\begin{equation}
\label{E:rec_Sigma_app}
\hat\Sigma_j
:=\argmin_{\Sigma\in\sym_d^+(\mathbb{R})}\
\sum_{r=1}^k\big(\langle u_r,\Sigma u_r\rangle-\hat\tau^2_{u_r,(2),j}\big)^2.
\end{equation}
As discussed in Proposition~(Uniqueness of covariance reconstruction) in the main text,
uniqueness holds if $\{u_r u_r^\top\}_{r=1}^k$ spans $\sym_d(\mathbb{R})$
(equivalently, if the associated design matrix has rank $d(d+1)/2$).
In practice, \eqref{E:rec_Sigma_app} is solved with a semidefinite-programming
routine (e.g.\ primal--dual interior-point methods, cf.  \citep{BV2004}).

\paragraph{Output of Algorithm~\ref{alg:twostep}.}
The final estimator is $\hat\Theta
=\big(\hat\Lambda_{(1)},\hat{\bm{\mu}},\hat{\bm{\Sigma}}\big)$,   $\hat{\bm{\mu}}=(\hat\mu_1,\dots,\hat\mu_m)$, and 
$\hat{\bm{\Sigma}}=(\hat\Sigma_1,\dots,\hat\Sigma_m).$

\section{Example 3 (Section 3): Constrained-parameter estimation for a $d=20$ three-component Student $t$-mixture}
\label{apend3}

We now consider a high-dimensional Student $t$-mixture in dimension $d=20$, and we compare the estimation obtained by the random projections approach (RP) with a multivariate EM procedure under a \emph{restricted} parameterization. As in the previous examples, we generate i.i.d.\ observations from a finite mixture of multivariate $t$ distributions, but here we impose structural constraints on both the component locations and the scatter matrix in order to reduce the dimension of the parameter space.

Specifically, let $t_{\nu}(\mu,\Sigma)$ denote the $d$-variate Student $t$-distribution with $\nu$ degrees of freedom, location parameter $\mu\in\mathbb{R}^d$, and (scatter) matrix $\Sigma\in\mathbb{R}^{d\times d}$. We consider the three-component mixture
\[
F
:= \lambda_1\, t_{\nu}(\mu_{1}, \Sigma)
 + \lambda_2\, t_{\nu}(\mu_{2}, \Sigma)
 + \lambda_3\, t_{\nu}(\mu_{3}, \Sigma),
\qquad \lambda_1+\lambda_2+\lambda_3=1,
\]
with common degrees of freedom $\nu=2$ and mixing weights $(\lambda_1,\lambda_2,\lambda_3) = (0.3,0.3,0.4).$

\paragraph{Constrained locations.}
To reduce the location-parameter space, we constrain each mean vector to have constant coordinates $\mu_k = m_k\,\mathbf{1}_d,\qquad k=1,2,3,$ where $\mathbf{1}_d$ denotes the $d$-dimensional vector of ones. In the simulation we set $m_1=0,\qquad m_2=1,\qquad m_3=3,$ so that $\mu_1=(0,\ldots,0)$, $\mu_2=(1,\ldots,1)$, and $\mu_3=(3,\ldots,3)$.

\paragraph{Compound-symmetry covariance.}
To reduce the scatter-parameter space, the three components share a common compound-symmetry (CS) matrix, $\Sigma=\Sigma(x):=(1-x)I_d + x\,\mathbf{1}_d\mathbf{1}_d^{\top}$, that is, $\Sigma_{ii}=1$ and $\Sigma_{ij}=x$ for $i\neq j$. In the simulation we take $x=0.25$ (which must satisfy $-1/(d-1)<x<1$ to ensure positive definiteness).

\paragraph{Sampling scheme.}
For each replicate, we generate $N=200$ i.i.d.\ observations from $F$ by first sampling latent labels $Z_i\in\{1,2,3\}$ with
$\mathbb{P}(Z_i=k)=\lambda_k$, and then drawing $X_i \mid (Z_i=k) \sim t_{\nu}(\mu_k,\Sigma),\qquad i=1,\ldots,n$. This experiment is replicated $100$ times.

\paragraph{Estimation and evaluation.}
In each replicate we estimate the mixture parameters under the above constraints using:
(i) a multivariate EM algorithm that estimates $(\lambda_1,\lambda_2,\lambda_3)$, $(m_1,m_2,m_3)$ and $x$; and
(ii) an RP-based procedure that combines (a) a screened collection of random projections, (b) one-dimensional $t$-mixture fits to recover $(m_1,m_2,m_3)$ and $x$, and (c) a short multivariate refinement initialized at the RP estimates.

For both procedures, we compute MAP allocations and summarize clustering performance via the adjusted Rand index (ARI), see Figure \ref{F:ari}. In addition, we measure estimation accuracy through absolute errors for the mixing weights, the mean scalars (reported as errors for $\mu_1,\mu_2,\mu_3$ under the constraint $\mu_k=m_k\mathbf{1}_d$), see Figure \ref{F:g123}; and the scatter matrix, see Figure \ref{F:sigma}.

\begin{figure}[tbp]
  \centering

  % ---- fila k=1 ----
  \subfloat[$|\mu_1-\mu_1^{*}|$]{\includegraphics[width=30mm]{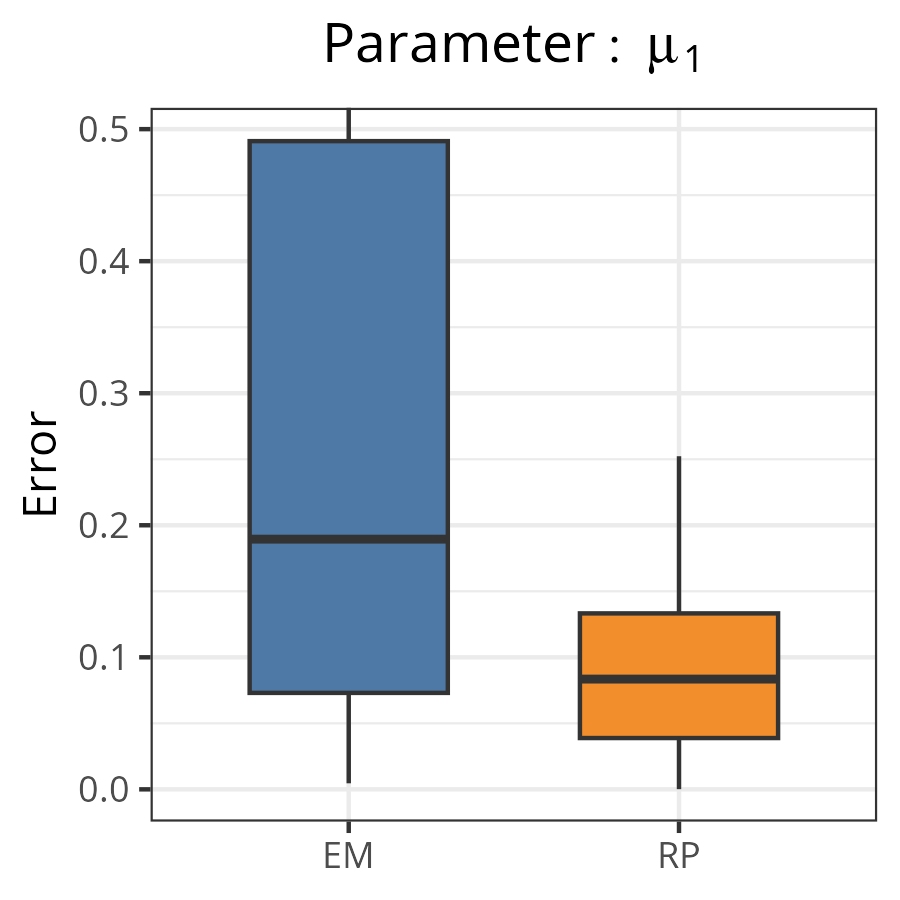}}\hspace{2mm}
  \subfloat[$|\lambda_1-\lambda_1^{*}|$]{\includegraphics[width=30mm]{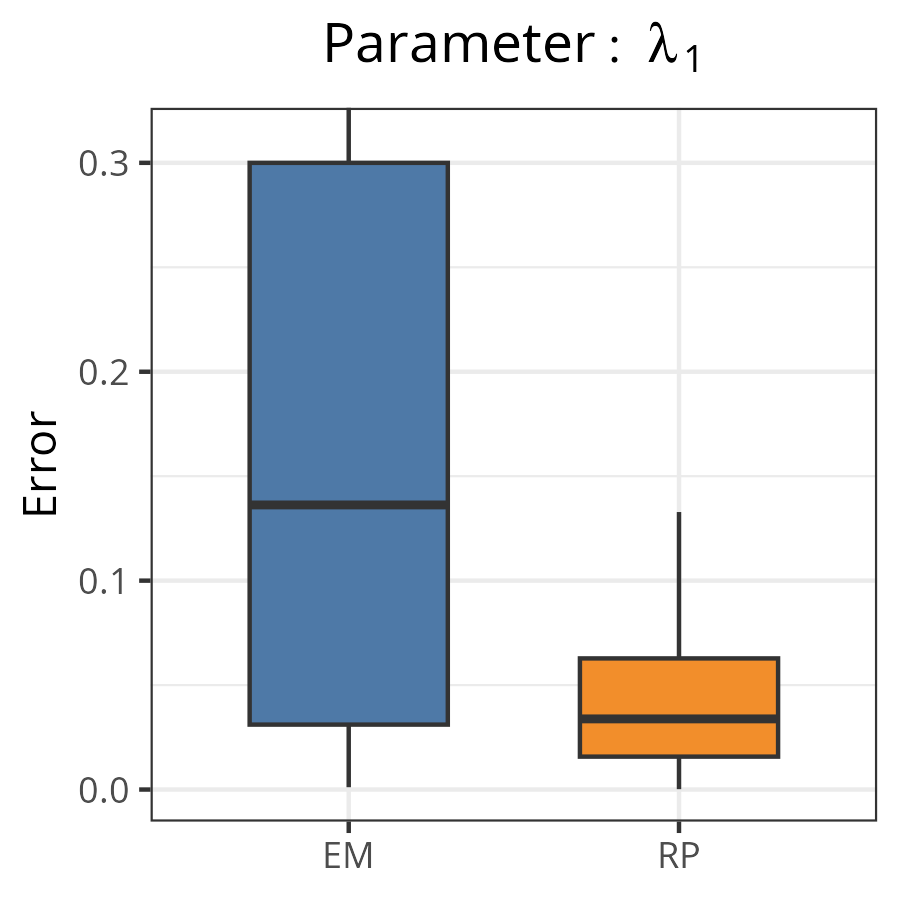}}\\[2mm]

  % ---- fila k=2 ----
  \subfloat[$|\mu_2-\mu_2^{*}|$]{\includegraphics[width=30mm]{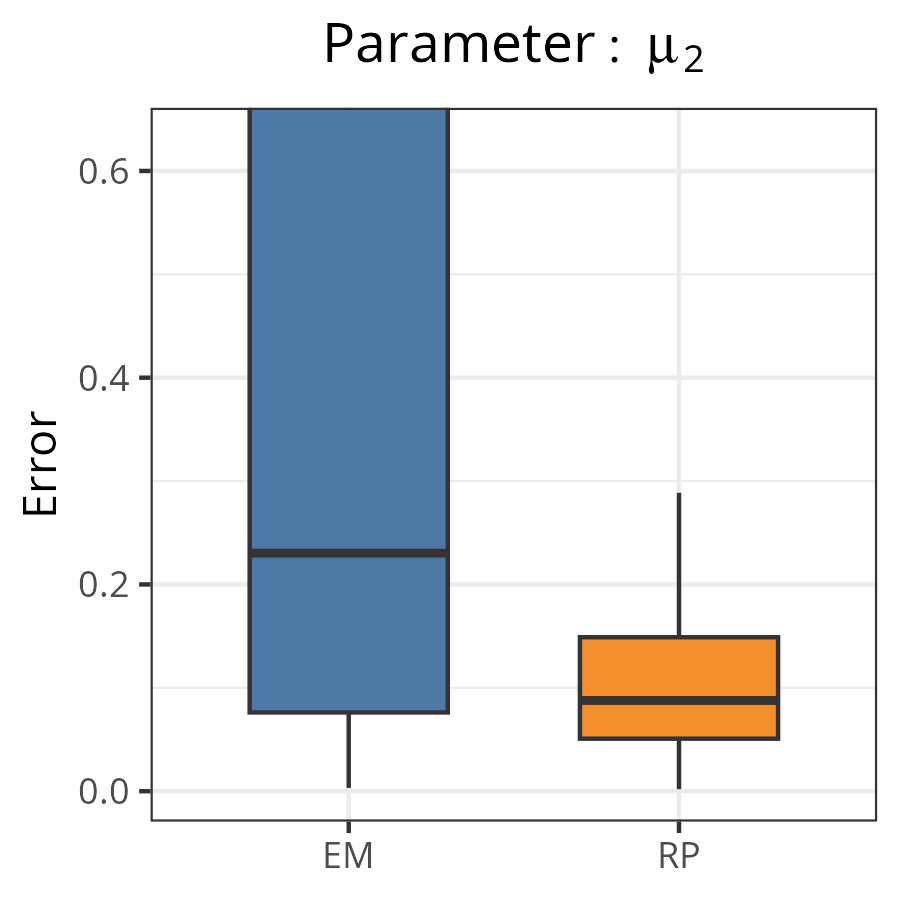}}\hspace{2mm}
  \subfloat[$|\lambda_2-\lambda_2^{*}|$]{\includegraphics[width=30mm]{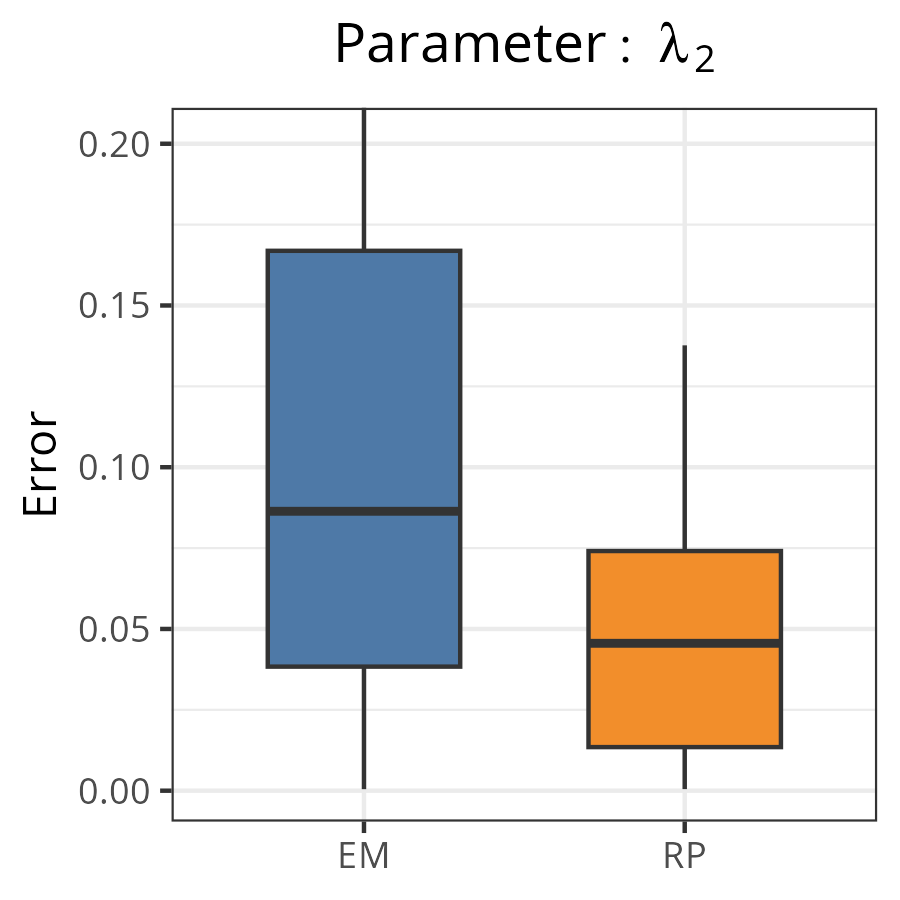}}\\[2mm]

  % ---- fila k=3 ----
  \subfloat[$|\mu_3-\mu_3^{*}|$]{\includegraphics[width=30mm]{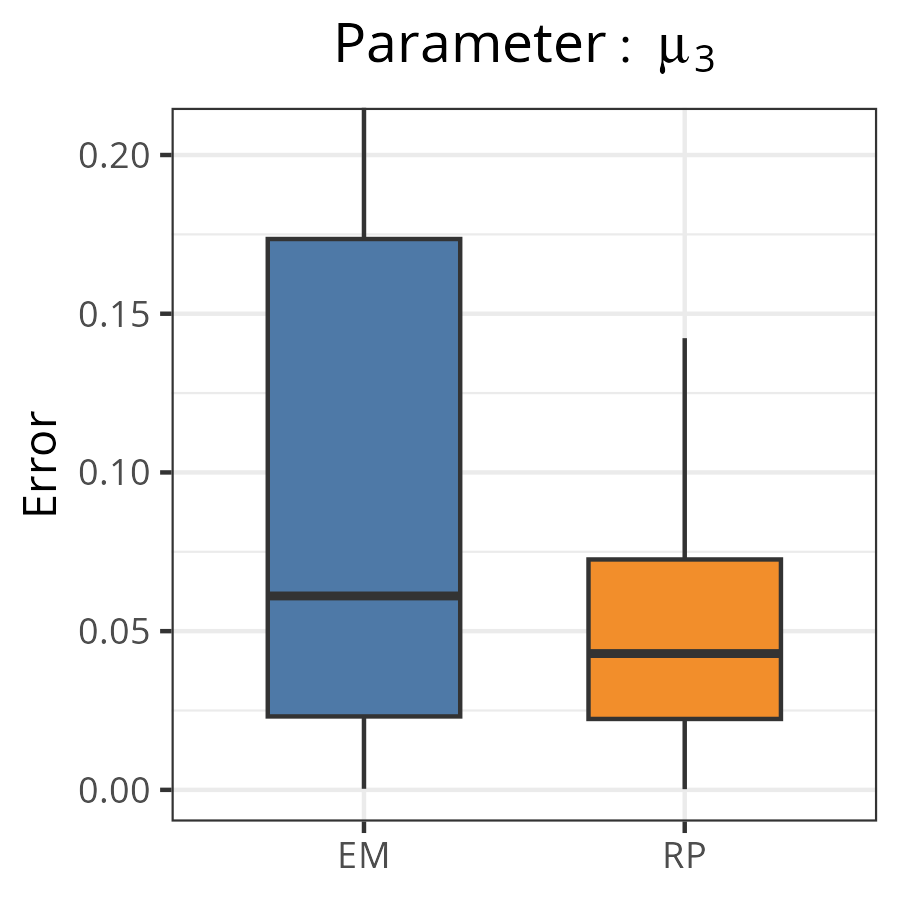}}\hspace{2mm}
  \subfloat[$|\lambda_3-\lambda_3^{*}|$]{\includegraphics[width=30mm]{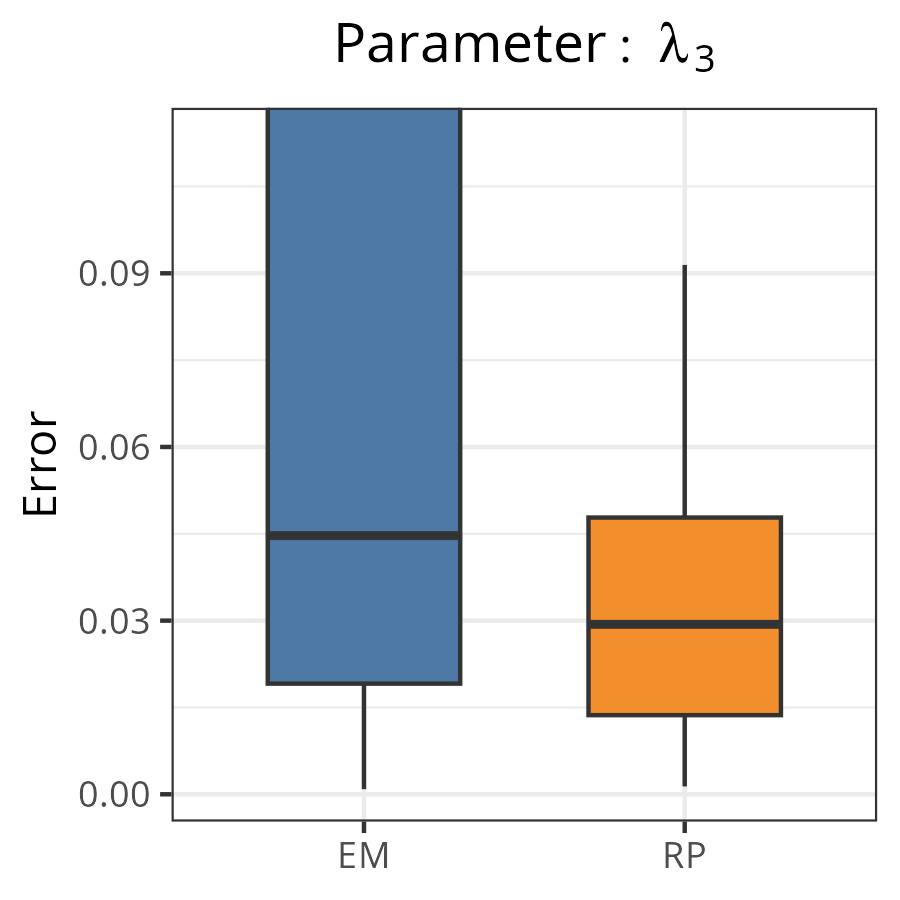}}

  \caption{Estimation errors over 100 Monte Carlo replicates for components $k=1,2,3$: absolute errors for the (constrained) locations $\mu_k$ (left column) and mixing weights $\lambda_k$ (right column), comparing EM vs.\ RP. Boxplots show median and IQR; whiskers follow Tukey's rule (outliers omitted).}
  \label{F:g123}
\end{figure}

\begin{figure}[tbp]
\centering

\begin{minipage}[t]{0.49\textwidth}
  \centering
  \includegraphics[width=60mm]{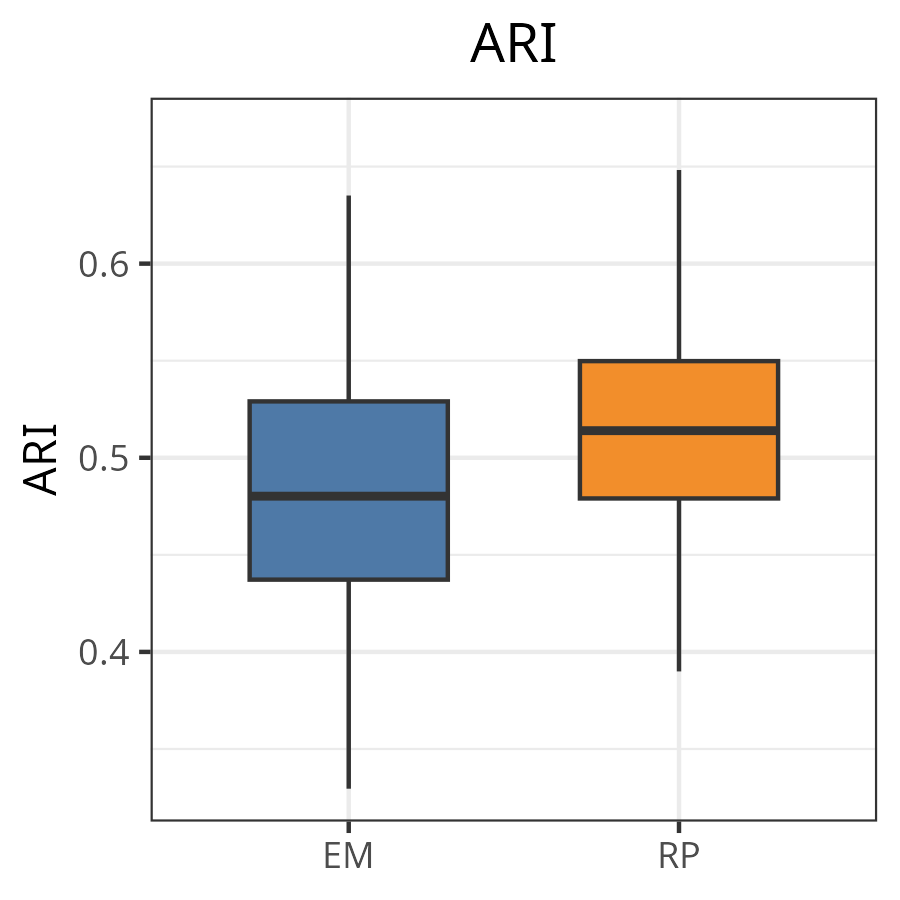}
  \caption{Boxplots of the Adjusted Rand Index (ARI) over the $100$ Monte Carlo replicates, comparing the EM and RP procedures. Larger ARI values indicate a better agreement between the estimated allocations and the true component labels.}
  \label{F:ari}
\end{minipage}\hfill
\begin{minipage}[t]{0.49\textwidth}
  \centering
  \includegraphics[width=60mm]{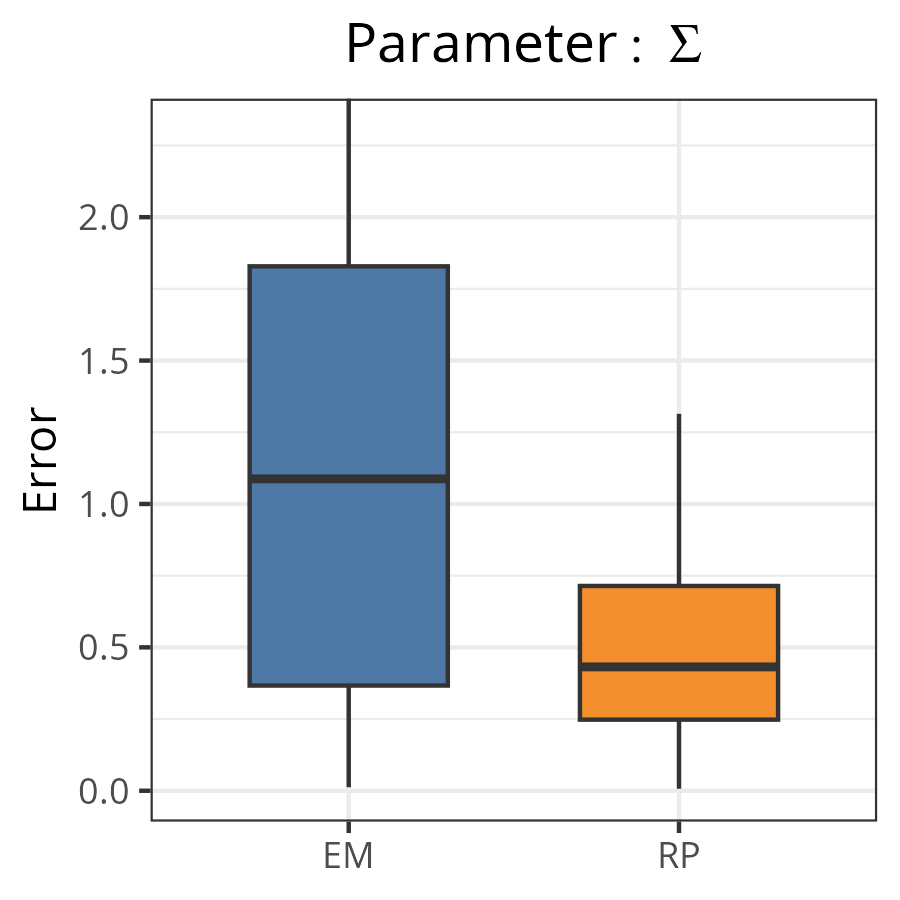}
  \caption{Boxplots of the Frobenius error $\|\Sigma-\Sigma^{*}\|_{F}$ over the $100$ Monte Carlo replicates, comparing EM and RP.}
  \label{F:sigma}
\end{minipage}

\end{figure}
\section{\small Descriptive Summary Tables and Confusion Matrices for the Simulations in Section 3}\label{A:confusion}
\label{apend2}

\begin{table}[!htbp]
\centering
\small

\begin{minipage}[t]{0.49\textwidth}
\centering
\setlength{\tabcolsep}{10pt}
\renewcommand{\arraystretch}{1.25}
\resizebox{\linewidth}{!}{%
\begin{tabular}{c c c}
\hline
$\eta$ & EM-st & RP \\
\hline
1/2 & \(\vcenter{\hbox{$\begin{pmatrix}0.163 & 0.139 \\ 0.310 & 0.389\end{pmatrix}$}}\) & \(\vcenter{\hbox{$\begin{pmatrix}0.133 & 0.169 \\ 0.253 & 0.446\end{pmatrix}$}}\) \\
1.0 & \(\vcenter{\hbox{$\begin{pmatrix}0.081 & 0.188 \\ 0.171 & 0.561\end{pmatrix}$}}\) & \(\vcenter{\hbox{$\begin{pmatrix}0.150 & 0.118 \\ 0.200 & 0.532\end{pmatrix}$}}\) \\
3/2 & \(\vcenter{\hbox{$\begin{pmatrix}0.203 & 0.093 \\ 0.294 & 0.409\end{pmatrix}$}}\) & \(\vcenter{\hbox{$\begin{pmatrix}0.198 & 0.099 \\ 0.139 & 0.564\end{pmatrix}$}}\) \\
2.0 & \(\vcenter{\hbox{$\begin{pmatrix}0.156 & 0.141 \\ 0.120 & 0.583\end{pmatrix}$}}\) & \(\vcenter{\hbox{$\begin{pmatrix}0.206 & 0.091 \\ 0.097 & 0.607\end{pmatrix}$}}\) \\
\hline
\end{tabular}%
}
\caption{Average confusion matrices (joint proportions) between true labels and posterior classifications for EM-st and RP.}
\label{Tb:confusion_all1}
\end{minipage}\hfill
\begin{minipage}[t]{0.49\textwidth}
\centering
\setlength{\tabcolsep}{10pt}
\renewcommand{\arraystretch}{1.25}
\resizebox{\linewidth}{!}{%
\begin{tabular}{c c c}
\hline
$\gamma$ & RobEM & RP \\
\hline
$\gamma=5\%$  & \(\vcenter{\hbox{$\begin{pmatrix}0.262 & 0.043 \\ 0.157 & 0.538 \end{pmatrix}$}}\) & \(\vcenter{\hbox{$\begin{pmatrix}0.232 & 0.072 \\ 0.063 & 0.632\end{pmatrix}$}}\) \\
$\gamma=10\%$ & \(\vcenter{\hbox{$\begin{pmatrix}0.254 & 0.046 \\ 0.140 & 0.560 \end{pmatrix}$}}\) & \(\vcenter{\hbox{$\begin{pmatrix}0.191 & 0.109 \\ 0.064 & 0.636\end{pmatrix}$}}\) \\
$\gamma=15\%$  & \(\vcenter{\hbox{$\begin{pmatrix}0.253 & 0.045\\ 0.178 & 0.524\end{pmatrix}$}}\) & \(\vcenter{\hbox{$\begin{pmatrix}0.292 & 0.076  \\ 0.103 & 0.529\end{pmatrix}$}}\) \\
\hline
\end{tabular}%
}
\caption{Average confusion matrices (joint proportions) between true labels and posterior classifications for RobEM-st and RP. }
\label{Tb:confusion_all2}
\end{minipage}

\end{table}

\begin{table}[!ht]
\centering
\caption{Average values of the estimated parameters over $100$ replicates of the algorithm. Marginal standard deviations (sd) are shown in parentheses. Results are reported for four separability scenarios indexed by the location-shift parameter $\eta \in \{1/2,1,3/2,2\}$.}
\label{Tb:mixla_eta}
\small
\setlength{\tabcolsep}{5pt}
\renewcommand{\arraystretch}{1.12}
\begin{tabular}{|c|c|c|c|c|c|c|}
\hline
Algorithm &Component  &$\mu_{1}$ & $\mu_{2}$ & $\sigma_{11}$ & $\sigma_{22}$ & $\sigma_{21}$ \\
\hline
\multicolumn{2}{|c}{} & \multicolumn{5}{|c|}{Average estimated values} \\
\hline

% ============================================================
% True values (shown once)
Values &1&0 & 0 & 1 & 1/2 & 0 \\
&2&$\eta$ & 0 & 1/2 & 1 & 0 \\
\hline

% ============================================================
\multicolumn{7}{|c|}{\textbf{Scenario: $\eta = 1/2$}}\\
\hline
RP &1 &-0.36 & -0.04 & 0.92 & 0.90 & 0.00 \\
&sd &(0.94) & (1.30) & (1.35) & (0.65) & (0.62) \\
&2 &0.92 & 0.04 & 0.66 & 1.02 & -0.05 \\
&sd &(0.65) & (1.37) & (0.59) & (0.88) & (0.48) \\
\hline
EM-st &1 &0.03 & -0.05 & 0.91 & 0.63 & 0.00 \\
&sd &(1.04) & (0.98) & (2.61) & (0.85) & (0.90) \\
&2 &0.89 & -0.01 & 0.50 & 0.64 & 0.01 \\
&sd &(0.66) & (0.69) & (0.48) & (0.83) & (0.54) \\
\hline

% ============================================================
\multicolumn{7}{|c|}{\textbf{Scenario: $\eta = 1$}}\\
\hline
RP &1 &-0.38 & 0.11 & 0.84 & 0.86 & 0.03 \\
&sd &(0.71) & (1.25) & (0.48) & (0.49) & (0.33) \\
&2 &1.35 & -0.01 & 0.57 & 0.80 & 0.02 \\
&sd &(0.49) & (1.07) & (0.25) & (0.45) & (0.22) \\
\hline
EM-st &1 &0.41 & -0.01 & 0.55 & 0.50 & -0.05 \\
&sd &(0.91) & (0.69) & (0.49) & (0.40) & (0.50) \\
&2 &1.48 & -0.01 & 0.71 & 0.79 & -0.28 \\
&sd &(0.98) & (0.76) & (2.65) & (2.61) & (2.65) \\
\hline

% ============================================================
\multicolumn{7}{|c|}{\textbf{Scenario: $\eta = 3/2$}}\\
\hline
RP &1 &-0.14 & -0.03 & 1.51 & 1.10 & -0.36 \\
&sd &(1.00) & (1.12) & (5.75) & (3.47) & (4.25) \\
&2 &1.73 & -0.04 & 0.66 & 0.97 & 0.00 \\
&sd &(0.45) & (1.11) & (0.31) & (0.99) & (0.45) \\
\hline
EM-st &1 &0.87 & 0.13 & 1.12 & 0.75 & 0.17 \\
&sd &(0.69) & (0.97) & (3.42) & (0.90) & (1.59) \\
&2 &1.39 & 0.10 & 0.44 & 0.64 & -0.02 \\
&sd &(0.65) & (1.05) & (0.28) & (0.24) & (0.30) \\
\hline

% ============================================================
\multicolumn{7}{|c|}{\textbf{Scenario: $\eta = 2$}}\\
\hline
RP &1 &-0.27 & 0.01 & 1.10 & 0.65 & -0.07 \\
&sd &(0.39) & (0.44) & (0.43) & (0.50) & (0.34) \\
&2 &2.20 & 0.06 & 0.67 & 0.89 & 0.00 \\
&sd &(0.27) & (0.41) & (0.28) & (0.33) & (0.17) \\
\hline
EM-st &1 &0.89 & -0.03 & 0.66 & 0.64 & -0.09 \\
&sd &(0.53) & (0.88) & (0.39) & (0.33) & (0.44) \\
&2 &1.60 & -0.07 & 0.36 & 0.74 & -0.02 \\
&sd &(0.49) & (1.02) & (0.14) & (0.26) & (0.18) \\
\hline

\end{tabular}
\end{table}

	% ============================================================
% Tables for contamination scenarios (gamma)
% ============================================================

\begin{table}[!ht]
\centering
\caption{Average values of the estimated parameters over $100$ replicates of the algorithm. Marginal standard deviations (sd) are shown in parentheses. Results are reported for three contamination scenarios indexed by the outlier proportion $\gamma \in \{0.05,0.10,0.15\}$.}
\label{Tb:mixla_gamma}
\small
\setlength{\tabcolsep}{5pt}
\renewcommand{\arraystretch}{1.12}
\begin{tabular}{|c|c|c|c|c|c|c|}
\hline
Algorithm &Component  &$\mu_{1}$ & $\mu_{2}$ & $\sigma_{11}$ & $\sigma_{22}$ & $\sigma_{21}$ \\
\hline
\multicolumn{2}{|c}{} & \multicolumn{5}{|c|}{Average estimated values} \\
\hline

% ============================================================
% True values (shown once)
Values &1&0 & 0 & 1 & 1/2 & 0 \\
&2&2 & 0 & 1/2 & 1 & 0 \\
\hline

% ============================================================
\multicolumn{7}{|c|}{\textbf{Scenario: $\gamma = 0.05$}}\\
\hline
RP &1 &-0.27 & 0.12 & 1.16 & 0.68 & 0.00 \\
&sd &(0.32) & (0.54) & (0.41) & (0.53) & (0.31) \\
&2 &2.27 & 0.04 & 0.72 & 1.05 & 0.05 \\
&sd &(0.40) & (0.43) & (0.45) & (0.31) & (0.12) \\
\hline
EM-rob &1 &0.60 & 0.10 & 3.18 & 1.32 & 0.17 \\
&sd &(0.32) & (0.17) & (0.75) & (0.39) & (0.43) \\
&2 &2.07 & 0.05 & 0.89 & 2.31 & 0.06 \\
&sd &(0.10) & (0.17) & (0.37) & (0.51) & (0.16) \\
\hline

% ============================================================
\multicolumn{7}{|c|}{\textbf{Scenario: $\gamma = 0.10$}}\\
\hline
RP &1 &-0.14 & 0.46 & 1.08 & 0.73 & 0.07 \\
&sd &(0.45) & (0.86) & (0.27) & (0.40) & (0.20) \\
&2 &2.24 & -0.11 & 0.68 & 1.09 & 0.05 \\
&sd &(0.37) & (0.68) & (0.18) & (0.40) & (0.14) \\
\hline
EM-rob &1 &0.70 & 0.17 & 3.33 & 1.61 & 0.34 \\
&sd &(0.29) & (0.23) & (0.68) & (0.66) & (0.70) \\
&2 &2.07 & 0.13 & 0.93 & 2.53 & 0.06 \\
&sd &(0.10) & (0.23) & (0.36) & (0.74) & (0.18) \\
\hline

% ============================================================
\multicolumn{7}{|c|}{\textbf{Scenario: $\gamma = 0.15$}}\\
\hline
RP &1 &0.12 & 0.59 & 1.03 & 0.88 & 0.09 \\
&sd &(0.63) & (1.75) & (0.35) & (0.40) & (0.24) \\
&2 &2.21 & 0.28 & 0.70 & 1.12 & 0.09 \\
&sd &(0.42) & (1.57) & (0.25) & (0.81) & (0.21) \\
\hline
EM-rob &1 &0.77 & 0.28 & 3.37 & 1.85 & 0.54 \\
&sd &(0.31) & (0.30) & (0.73) & (0.87) & (0.84) \\
&2 &2.08 & 0.21 & 1.07 & 2.80 & 0.08 \\
&sd &(0.11) & (0.29) & (0.36) & (1.02) & (0.22) \\
\hline

\end{tabular}
\end{table}

\end{appendixcompact}
\end{appendices}

\FloatBarrier


\begin{thebibliography}{99}

\bibitem[Achlioptas(2001)]{A2001}
D.~Achlioptas.
\newblock Database-friendly random projections.
\newblock In {\em Proceedings of the Twentieth ACM SIGMOD-SIGACT-SIGART
  Symposium on Principles of Database Systems}, pages 274--281, 2001.

\bibitem[Arabie and Boorman(1973)]{AB73}
P.~Arabie and S.~A. Boorman.
\newblock Multidimensional scaling of measures of distance between partitions.
\newblock {\em Journal of Mathematical Psychology}, 10:148--203, 1973.

\bibitem[Bali et~al.(2011)]{BBTW2011}
J.L. Bali, G.~Boente, D.~Tyler, and J-L. Wang.
\newblock Robust functional principal components: A projection-pursuit
  approach.
\newblock {\em The Annals of Statistics}, 39(6):2852--2882, 2011.

\bibitem[Baudry et~al.(2010)]{BRCLG10}
J.-P. Baudry, A.~E. Raftery, G.~Celeux, K.~Lo, and R.~Gottardo.
\newblock Combining mixture components for clustering.
\newblock {\em Journal of Computational and Graphical Statistics}, 19(2):332--353, 2010.

\bibitem[B\'{e}lisle et~al.(1997)]{BMR97}
C.~B\'{e}lisle, J.-C. Mass\'{e}, and T.~Ransford.
\newblock When is a probability measure determined by infinitely many
  projections?
\newblock {\em The Annals of Probability}, 25(2):767--786, 1997.

\bibitem[Bingham and Mannila(2001)]{BM2001}
E.~Bingham and H.~Mannila.
\newblock Random projection in dimensionality reduction: applications to image
  and text data.
\newblock In {\em Proceedings of the Seventh ACM SIGKDD International
  Conference on Knowledge Discovery and Data Mining}, pages 245--250, 2001.

\bibitem[Bouveyron and Brunet-Saumard(2014)]{BBS2014}
Ch. Bouveyron and C.~Brunet-Saumard.
\newblock Model-based clustering of high-dimensional data: A review.
\newblock {\em Computational Statistics \& Data Analysis}, 71:52--78, 2014.

\bibitem[Boyd and Vandenberghe(2004)]{BV2004}
S.~Boyd and L.~Vandenberghe.
\newblock {\em Convex Optimization}.
\newblock Cambridge University Press, 2004.

\bibitem[Chen(1995)]{CH1995}
J.~Chen.
\newblock Optimal rate of convergence for finite mixture models.
\newblock {\em The Annals of Statistics}, pages 221--233, 1995.

\bibitem[Chen and Li(2009)]{CL09}
J.~Chen and P.~Li.
\newblock Hypothesis test for normal mixture models: the {EM} approach.
\newblock {\em The Annals of Statistics}, 37(5A):2523--2542, 2009.

\bibitem[Cram\'{e}r and Wold(1936)]{CW36}
H.~Cram\'{e}r and H.~Wold.
\newblock Some theorems on distribution functions.
\newblock {\em Journal of the London Mathematical Society}, 11(4):290--294, 1936.

\bibitem[Cuesta-Albertos et~al.(2007)]{CBFM2007}
J.~A. Cuesta-Albertos, E.~del Barrio, R.~Fraiman, and C~Matr\'an.
\newblock The random projection method in goodness of fit for functional data.
\newblock {\em Computational Statistics \& Data Analysis}, 51(10):4814--4831,
  2007.

\bibitem[Cuesta-Albertos et~al.(2006)]{CFR06}
J.~A. Cuesta-Albertos, R.~Fraiman, and T.~Ransford.
\newblock Random projections and goodness-of-fit tests in infinite-dimensional
  spaces.
\newblock {\em Bulletin of the Brazilian Mathematical Society, New Series}, 37(4):477--501, 2006.

\bibitem[Cuesta-Albertos et~al.(2007)]{CFR07}
J.~A. Cuesta-Albertos, R.~Fraiman, and T.~Ransford.
\newblock A sharp form of the {C}ram\'{e}r-{W}old theorem.
\newblock {\em Journal of Theoretical Probability}, 20(2):201--209, 2007.

\bibitem[Cuesta-Albertos and Febrero-Bande(2010)]{CF2010}
J.A. Cuesta-Albertos and M.~Febrero-Bande.
\newblock A simple multiway anova for functional data.
\newblock {\em Test}, 19(3):537--557, 2010.

\bibitem[Cuevas et~al.(2007)]{CFF2007}
A.~Cuevas, M.~Febrero, and R.~Fraiman.
\newblock Robust estimation and classification for functional data via
  projection-based depth notions.
\newblock {\em Computational Statistics}, 22(3):481--496, 2007.

\bibitem[Cuevas and Fraiman(2009)]{CF2009}
A.~Cuevas and R.~Fraiman.
\newblock On depth measures and dual statistics. a methodology for dealing with
  general data.
\newblock {\em Journal of Multivariate Analysis}, 100(4):753--766, 2009.

\bibitem[Dasgupta(2013)]{D2013}
S.~Dasgupta.
\newblock Experiments with random projection.
\newblock {\em Preprint, arXiv:1301.3849}, 2013.

\bibitem[Dasgupta and Gupta(2003)]{DG2003}
S.~Dasgupta and A.~Gupta.
\newblock An elementary proof of a theorem of Johnson and Lindenstrauss.
\newblock {\em Random Structures \& Algorithms}, 22(1):60--65, 2003.

\bibitem[del Barrio et~al.(2019)]{dBCMM08}
E.~del Barrio, J.~A. Cuesta-Albertos, C.~Matr\'{a}n, and A.~Mayo-\'{I}scar.
\newblock Robust clustering tools based on optimal transportation.
\newblock {\em Statistics and Computing}, 29(1):139--160, 2019.

\bibitem[Devroye et~al.(2016)]{DLLO16}
L.~Devroye, M.~Lerasle, G.~Lugosi, and R.~I. Oliveira.
\newblock Sub-{G}aussian mean estimators.
\newblock {\em The Annals of Statistics}, 44(6):2695--2725, 2016.

\bibitem[Donoho and Tanner(2009)]{DT2009}
D.~Donoho and J.~Tanner.
\newblock Observed universality of phase transitions in high-dimensional
  geometry, with implications for modern data analysis and signal processing.
\newblock {\em Philosophical Transactions of the Royal Society A: Mathematical,
  Physical and Engineering Sciences}, 367(1906):4273--4293, 2009.

\bibitem[Doss et~al.(2023)]{DWYZ23}
N.~Doss, Y.~Wu, P.~Yang, and H.~H. Zhou.
\newblock Optimal estimation of high-dimensional {G}aussian location mixtures.
\newblock {\em The Annals of Statistics}, 51(1):62--95, 2023.

\bibitem[Duarte et~al.(2010)]{DBM10}
J.~Duarte, M.~S. Bos, and M.~Moreno.
\newblock Inequity in school achievement in {L}atin {A}merica: multilevel
  analysis of {SERCE} results according to the socioeconomic status of
  students.
\newblock IDB Publications (Working Papers) 2558, Inter-American Development
  Bank, 2010.

\bibitem[Feuerverger and McDunnough(1981a)]{FM1981a}
A.~Feuerverger and P.~McDunnough.
\newblock On some Fourier methods for inference.
\newblock {\em Journal of the American Statistical Association},
  76(374):379--387, 1981.

\bibitem[Feuerverger and McDunnough(1981b)]{FM1981b}
A.~Feuerverger and P.~McDunnough.
\newblock On the efficiency of empirical characteristic function procedures.
\newblock {\em Journal of the Royal Statistical Society. Series B
  (Methodological)}, pages 20--27, 1981.

\bibitem[Fowlkes and Mallows(1983)]{FM83}
E.~B. Fowlkes and C.~L. Mallows.
\newblock A method for comparing two hierarchical clusterings.
\newblock {\em Journal of the American Statistical Association}, 78(383):553--569, 1983.

\bibitem[Fraiman et~al.(2025)]{FMR2025}
R.~Fraiman, L.~Moreno, and T.~Ransford.
\newblock A {C}ram\'er--{W}old theorem for mixtures.
\newblock {\em Electronic Communications in Probability}, 30:1--10, 2025.

\bibitem[Fraiman et~al.(2023)]{FMR23a}
R.~Fraiman, L.~Moreno, and T.~Ransford.
\newblock A {C}ram\'{e}r-{W}old theorem for elliptical distributions.
\newblock {\em Journal of Multivariate Analysis}, 196:Paper No. 105176, 13, 2023.

\bibitem[Fraiman et~al.(2017)]{FMV2017}
R.~Fraiman, L.~Moreno, and S.~Vallejo.
\newblock Some hypothesis tests based on random projection.
\newblock {\em Computational Statistics}, 32(3):1165--1189, 2017.

\bibitem[Fraley and Raftery(2002)]{FR2002}
Ch. Fraley and A.~E. Raftery.
\newblock Model-based clustering, discriminant analysis, and density
  estimation.
\newblock {\em Journal of the American Statistical Association},
  97(458):611--631, 2002.

\bibitem[Gilbert(1955)]{Gi55}
W.~M. Gilbert.
\newblock Projections of probability distributions.
\newblock {\em Acta Mathematica Academiae Scientiarum Hungaricae}, 6:195--198, 1955.

\bibitem[Godichon-Baggioni and Robin(2024)]{GR24}
A.~Godichon-Baggioni and S.~Robin.
\newblock A robust model-based clustering based on the geometric median and the
  median covariation matrix.
\newblock {\em Statistics and Computing}, 34(1):Paper No. 55, 21, 2024.

\bibitem[Gormley et~al.(2023)]{GMR23}
I.~C. Gormley, T.~B. Murphy, and A.~E. Raftery.
\newblock Model-based clustering.
\newblock {\em Annual Review of Statistics and Its Application}, 10:573--595, 2023.

\bibitem[Halko et~al.(2011)]{HMT2011}
N.~Halko, P-G. Martinsson, and J.~Tropp.
\newblock Finding structure with randomness: Probabilistic algorithms for
  constructing approximate matrix decompositions.
\newblock {\em SIAM Review}, 53(2):217--288, 2011.

\bibitem[Hansen(1982)]{Ha82}
L.~P. Hansen.
\newblock Large sample properties of generalized method of moments estimators.
\newblock {\em Econometrica}, 50(4):1029--1054, 1982.

\bibitem[Heathcote(1977)]{He77}
C.~R. Heathcote.
\newblock The integrated squared error estimation of parameters.
\newblock {\em Biometrika}, 64(2):255--264, 1977.

\bibitem[Heinrich and Kahn(2018)]{HK2018}
P.~Heinrich and J.~Kahn.
\newblock Strong identifiability and optimal minimax rates for finite mixture
  estimation.
\newblock {\em The Annals of Statistics}, 46(6A):2844--2870, 2018.

\bibitem[Hubert and Arabie(1985)]{HA85}
L.~Hubert and P.~Arabie.
\newblock Comparing partitions.
\newblock {\em Journal of Classification}, 2(1):193--218, 1985.

\bibitem[Johnson and Lindenstrauss(1984)]{JL1984}
W.B. Johnson and J.~Lindenstrauss.
\newblock Extensions of {L}ipschitz mappings into a {H}ilbert space.
\newblock {\em Contemporary Mathematics}, 26:189--206, 1984.

\bibitem[Journ{\'e}e et~al.(2010)]{JBAS2010}
M.~Journ{\'e}e, F.~Bach, P-A Absil, and R.~Sepulchre.
\newblock Low-rank optimization on the cone of positive semidefinite matrices.
\newblock {\em SIAM Journal on Optimization}, 20(5):2327--2351, 2010.

\bibitem[McLachlan et~al.(2019)]{MLR2019}
G.~McLachlan, S.~Lee, and S.~Rathnayake.
\newblock Finite mixture models.
\newblock {\em Annual Review of Statistics and Its Application}, 6(1):355--378,
  2019.

\bibitem[McLachlan and Peel(2000)]{MP2000}
G.~McLachlan and D.~Peel.
\newblock {\em Finite Mixture Models}.
\newblock John Wiley \& Sons, 2000.

\bibitem[Meil\u{a}(2007)]{Me07}
M.~Meil\u{a}.
\newblock Comparing clusterings---an information based distance.
\newblock {\em Journal of Multivariate Analysis}, 98(5):873--895, 2007.

\bibitem[Moitra and Valiant(2010)]{MV2010}
A.~Moitra and G.~Valiant.
\newblock Settling the polynomial learnability of mixtures of {G}aussians.
\newblock In {\em 2010 IEEE 51st Annual Symposium on Foundations of Computer
  Science}, pages 93--102. IEEE, 2010.

\bibitem[Navarro-Esteban and Cuesta-Albertos(2021)]{NC2021}
P.~Navarro-Esteban and J.A. Cuesta-Albertos.
\newblock High-dimensional outlier detection using random projections.
\newblock {\em Test}, 30(4):908--934, 2021.

\bibitem[Newey and West(1987)]{NW86}
W.~K. Newey and K.~D. West.
\newblock A simple, positive semidefinite, heteroskedasticity and
  autocorrelation consistent covariance matrix.
\newblock {\em Econometrica}, 55(3):703--708, 1987.

\bibitem[Nieto-Reyes et~al.(2014)]{NCG2014}
A.~Nieto-Reyes, J.A. Cuesta-Albertos, and F.~Gamboa.
\newblock A random-projection based test of {G}aussianity for stationary
  processes.
\newblock {\em Computational Statistics \& Data Analysis}, 75:124--141, 2014.

\bibitem[Pearson(1894)]{Pe1894}
K.~Pearson.
\newblock Contributions to the mathematical theory of evolution.
\newblock {\em Philosophical Transactions of the Royal Society A: Mathematical,
  Physical and Engineering Sciences}, 185:71--110, 1894.

\bibitem[Peel and McLachlan(2000)]{PM00}
D.~Peel and G.~J. McLachlan.
\newblock Robust mixture modelling using the $t$ distribution.
\newblock {\em Statistics and Computing}, 10:339--348, 2000.

\bibitem[Pitman(1956)]{P1956}
E.J.G. Pitman.
\newblock On the derivatives of a characteristic function at the origin.
\newblock {\em The Annals of Mathematical Statistics}, 27(4):1156--1160, 1956.

\bibitem[Raftery and Dean(2006)]{RD06}
A.~E. Raftery and N.~Dean.
\newblock Variable selection for model-based clustering.
\newblock {\em Journal of the American Statistical Association}, 101(473):168--178, 2006.

\bibitem[Rand(1971)]{Ra71}
W.~M. Rand.
\newblock Objective criteria for the evaluation of clustering methods.
\newblock {\em Journal of the American Statistical Association}, 66(336):846--850, 1971.

\bibitem[R\'{e}nyi(1952)]{Re52}
A.~R\'{e}nyi.
\newblock On projections of probability distributions.
\newblock {\em Acta Mathematica Academiae Scientiarum Hungaricae}, 3:131--142, 1952.

\bibitem[Rockafellar and Wets(2009)]{RW2009}
R.T. Rockafellar and R.~Wets.
\newblock {\em Variational Analysis}, volume 317.
\newblock Springer Science \& Business Media, 2009.

\bibitem[Scott(1992)]{Sc92}
D.~W. Scott.
\newblock {\em Multivariate Density Estimation}.
\newblock Wiley Series in Probability and Mathematical Statistics: Applied
  Probability and Statistics. John Wiley \& Sons, Inc., New York, 1992.
\newblock Theory, practice, and visualization, A Wiley-Interscience
  Publication.

\bibitem[Tadesse et~al.(2005)]{TSV05}
M.~G. Tadesse, N.~Sha, and M.~Vannucci.
\newblock Bayesian variable selection in clustering high-dimensional data.
\newblock {\em Journal of the American Statistical Association}, 100(470):602--617, 2005.

\bibitem[Titterington et~al.(1985)]{TSM85}
D.~M. Titterington, A.~F.~M. Smith, and U.~E. Makov.
\newblock {\em Statistical Analysis of Finite Mixture Distributions}.
\newblock Wiley Series in Probability and Mathematical Statistics: Applied
  Probability and Statistics. John Wiley \& Sons, Ltd., Chichester, 1985.

\bibitem[Tran(1998)]{Tr98}
K.~C. Tran.
\newblock Estimating mixtures of normal distributions via empirical
  characteristic function.
\newblock {\em Econometric Reviews}, 17(2):167--183, 1998.

\bibitem[Van~der Maaten and Hinton(2008)]{VMH2008}
G.~Van~der Maaten and G.~Hinton.
\newblock Visualizing data using t-SNE.
\newblock {\em Journal of Machine Learning Research}, 9(11), 2008.

\bibitem[Wen et~al.(2010)]{WGY2010}
Z.~Wen, D.~Goldfarb, and W.~Yin.
\newblock Alternating direction augmented lagrangian methods for semidefinite
  programming.
\newblock {\em Mathematical Programming Computation}, 2(3):203--230, 2010.

\bibitem[White(1982)]{Wh82}
K.~R. White.
\newblock The relation between socioeconomic status and academic achievement.
\newblock {\em Psychological Bulletin}, 91(3):461--481, 1982.

\bibitem[Xu and Knight(2011)]{XK10}
D.~Xu and J.~Knight.
\newblock Continuous empirical characteristic function estimation of mixtures
  of normal parameters.
\newblock {\em Econometric Reviews}, 30(1):25--50, 2011.

\bibitem[Yan et~al.(2024)]{YWR2024}
Y.~Yan, K.~Wang, and P.~Rigollet.
\newblock Learning {G}aussian mixtures using the {W}asserstein--{F}isher--{R}ao
  gradient flow.
\newblock {\em The Annals of Statistics}, 52(4):1774--1795, 2024.

\bibitem[Youness and Saporta(2010)]{YS10}
G.~Youness and G.~Saporta.
\newblock Comparing partitions of two sets of units based on the same
  variables.
\newblock {\em Advances in Data Analysis and Classification}, 4(1):53--64, 2010.

\bibitem[Yu(2004)]{Y2004}
J.~Yu.
\newblock Empirical characteristic function estimation and its applications.
\newblock {\em Econometric Reviews}, 23(2):93--123, 2004.

\end{thebibliography}
\end{document}